\numberwithin{equation}{section}
\newtheorem{theorem}{Theorem}[section]
\newtheorem{corollary}[theorem]{Corollary}
\newtheorem{lemma}[theorem]{Lemma}
\theoremstyle{definition}
\newtheorem{example}[theorem]{Example}
\newtheorem{remark}[theorem]{Remark}
\newcommand{\Z}{\mathbb Z}
\newcommand{\C}{\mathbb C}
\newcommand{\PP}{\mathbb P}
\newcommand{\R}{\mathbb R}
\newcommand{\N}{\mathbb N}
\def\E{{\mathcal E}}
\def\dbar{{\bar\partial}}
\def\PM{{\mathcal{PM}}}
\def\Ok{{\mathcal O}}
\def\1{\textbf{1}}
\DeclareMathOperator{\End}{End}
\DeclareMathOperator{\Hom}{Hom}
\DeclareMathOperator{\Id}{Id}
\DeclareMathOperator{\codim}{codim}
\DeclareMathOperator{\supp}{supp}
\DeclareMathOperator{\im}{im}
\DeclareMathOperator{\tr}{tr}
\DeclareMathOperator{\ch}{ch}
\newcommand{\Res}{\mathrm{Res}}
\title{Chern currents of coherent sheaves}
\author{Richard L\"ark\"ang}
\address{Department of Mathematics, 
  Chalmers University of Technology and
  the University of Gothenburg,
  S-412 96 Gothenburg,
  Sweden}
\email{larkang@chalmers.se}
\author{Elizabeth Wulcan}
\address{Department of Mathematics, 
  Chalmers University of Technology and
  the University of Gothenburg,
  S-412 96 Gothenburg,
  Sweden}
\email{wulcan@chalmers.se}
\begin{document}



\maketitle

\begin{prelims}

\DisplayAbstractInEnglish

\bigskip

\DisplayKeyWords

\medskip

\DisplayMSCclass

\end{prelims}


\newpage

\setcounter{tocdepth}{1}

\tableofcontents


\section{Introduction}

Let $X$ be a complex manifold and let $\mathcal F$ be a coherent analytic
sheaf on $X$ of positive codimension, \textit{i.e.} such that the support
$\supp \mathcal F$ has positive codimension.
Assume that $\mathcal F$ has a locally free resolution of the form
\begin{equation} \label{eq:Ecomplex}
  0 \to \Ok(E_N)
  \xrightarrow[]{\varphi_N}\cdots \xrightarrow[]{\varphi_1}
  \Ok(E_0) \to \mathcal{F} \to 0,
\end{equation}
where $E_k$ are holomorphic vector bundles on $X$, and $\Ok(E_k)$
denote the corresponding locally free sheaves.  Then the (total) Chern
class of $\mathcal{F}$ equals
\begin{equation} \label{eq:cClassSheafDef}
  c(\mathcal{F}) = \prod_{k = 0}^N c(E_k)^{(-1)^k},
\end{equation}
where $c(E_k)$ is the (total) Chern class of $E_k$, see
Section~\ref{kalla}.  In this paper we construct explicit
representatives of the nontrivial part of $c(\mathcal F)$ with support
on $\supp \mathcal F$.

Let us briefly describe our construction; the representatives of
$c(\mathcal F)$ will be currents obtained as limits of certain Chern
forms.
Assume that $(E, \varphi)$ is a locally free resolution of $\mathcal
F$ of the form \eqref{eq:Ecomplex}. Moreover assume that each vector
bundle $E_k$ is equipped with a Hermitian metric and a connection $D_k$
(that is not necessarily the Chern connection of the metric).
Let $\sigma_k$ be the minimal inverse of $\varphi_k$, see Section \ref{ssect:AW1}, let $\chi:\R_{\geq 0}\to \R_{\geq 0}$ be a smooth cut-off function such that
    $\chi(t) \equiv 0$ for $t \ll 1$ and $\chi(t) \equiv 1$ for $t \gg
    1$, let $s$ be a (generically non-vanishing) holomorphic section of a vector bundle such that
    $\{s = 0\}\supseteq \supp \mathcal F$, let $\chi_\epsilon = \chi(|s|^2/\epsilon)$, and let $\widehat D^\epsilon_k$ be the
connection
\begin{equation}\label{nordenstam}
\widehat D_k^\epsilon = -\chi_\epsilon \sigma_k D\varphi_k + D_k;
\end{equation}
here $D$ is a connection on $\End E$, where $E = \bigoplus E_k$, induced by the $D_k$, see Section
~\ref{smaragd}.
Then clearly the Chern form
\begin{equation}\label{lagland}
  c(E, \widehat D^\epsilon) := \prod_{k = 0}^N c(E_k, \widehat D^\epsilon_k)^{(-1)^k}
\end{equation}
is a representative for $c(\mathcal F)$. Throughout this paper we
consider Chern classes as de Rham cohomology classes of smooth forms
or currents.
Our first main result asserts that the limit of this form is a current
 with the desired properties. 

\begin{theorem} \label{thm:existence}
   Assume that $\mathcal{F}$ is a coherent analytic sheaf of positive
   codimension that admits a locally free resolution of
     the form \eqref{eq:Ecomplex}.
    Moreover, assume that each $E_k$ is equipped with a Hermitian
    metric and a connection $D_k$. Let $c(E, \widehat D^\epsilon)$
    be the Chern form of $\mathcal F$ defined by \eqref{lagland}, and
    let $c_\ell(E, \widehat D^\epsilon)$ denote the component of
    degree $2\ell$.
    Let $\ell_1,\dots,\ell_m \in \N_{>0}$. Then
    \begin{equation} \label{eq:chernCurrent}
            c_{\ell_1}^{\Res}(E,D) \wedge \dots \wedge c_{\ell_m}^{\Res}(E,D) := 
        \lim_{\epsilon \to 0} c_{\ell_1}(E,\widehat{D}^\epsilon) \wedge
        \dots \wedge c_{\ell_m}(E,\widehat{D}^\epsilon)
    \end{equation}
      is a well-defined closed current, independent of $\chi_\epsilon$, that  represents
    $c_{\ell_1}(\mathcal F) \wedge \dots \wedge c_{\ell_m}(\mathcal{F})$
    and has support on $\supp \mathcal F$.
\end{theorem}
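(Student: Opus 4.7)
The plan is to first observe that for each fixed $\epsilon>0$ the connections $\widehat D_k^\epsilon$ are smooth, so each $c_{\ell_i}(E,\widehat D^\epsilon)$ is a smooth closed form representing $c_{\ell_i}(\mathcal F)$, and its wedge product with the others is a smooth closed form representing $c_{\ell_1}(\mathcal F)\wedge\cdots\wedge c_{\ell_m}(\mathcal F)$. This reduces the theorem to proving the existence of the limit in \eqref{eq:chernCurrent}: closedness and the cohomology-class statement then follow automatically, the former by continuity of $d$ in the sense of currents and the latter by continuity of the pairing of closed currents with closed test forms.

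For the existence, I would first expand the curvatures
\[
(\widehat D_k^\epsilon)^2 = D_k^2 - D(\chi_\epsilon \sigma_k D\varphi_k) + \chi_\epsilon^2 (\sigma_k D\varphi_k)\wedge(\sigma_k D\varphi_k),
\]
and substitute into the invariant polynomial defining $c_{\ell_i}$. The result is a polynomial expression in $\chi_\epsilon$ and $d\chi_\epsilon$ whose coefficients are forms built from $\sigma_k$, $D\varphi_k$, and the curvatures $D_k^2$. Since $\sigma_k$ is almost semi-meromorphic with poles contained in $\supp \mathcal F$, the $\epsilon\to 0$ limit of such an expression, and of wedge products of several such expressions, fits exactly the framework of regularisations $\chi(|s|^2/\epsilon)$ of almost semi-meromorphic and residue currents. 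The plan is then to invoke the convergence and multiplication theorems for such currents from the authors' previous work, which will simultaneously deliver existence of the limit and independence of the choice of cut-off $\chi$ and of the section $s$.

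For the support claim, I would note that on $X\setminus \supp \mathcal F$ the complex $(E,\varphi)$ is pointwise exact, so $\sigma$ is smooth there and satisfies $\sigma\varphi+\varphi\sigma=\Id_E$. For a point $x\notin \supp\mathcal F$ and $\epsilon$ sufficiently small, $\chi_\epsilon\equiv 1$ in a neighbourhood of $x$; a direct Chern--Weil computation in the superconnection $(\widehat D^\epsilon+\varphi)$ picture then shows that $c(E,\widehat D^\epsilon)=1$ on that neighbourhood, so all positive-degree components, and hence their wedge products, vanish near $x$. The main technical obstacle I anticipate is the wedge-product part of the statement: the limit currents $c^{\Res}_{\ell_i}(E,D)$ have no \emph{a priori} product structure, and proving that the simultaneous regularisation converges, rather than merely each factor individually, is exactly what will require the full strength of the almost semi-meromorphic machinery.
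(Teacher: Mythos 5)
Your proposal follows essentially the same route as the paper: expand the curvature of $\widehat D^\epsilon_k$ into terms graded by powers of $\chi_\epsilon$ and a single $d\chi_\epsilon$, reduce convergence to regularization results for products of entries of $\sigma_k$ and its derivatives (the paper packages this as Lemma~\ref{lma:PMexistence}, proved via Hironaka and the SEP --- note in particular that one must show the $\partial\chi_\epsilon$-terms tend to $0$, which is where the SEP of almost semi-meromorphic currents and their $\partial$-derivatives enters), obtain the support statement from the vanishing of $c(E,\widehat D^\epsilon)-1$ where $\chi_\epsilon\equiv 1$ (the paper cites Baum--Bott's Whitney formula for compatible connections, Lemmas~\ref{lma:compatible} and~\ref{lma:BBvanishing}, rather than your superconnection computation, but these are the same fact), and deduce closedness and the cohomology class by weak continuity. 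Your worry about the wedge products is unfounded: since $(d\chi_\epsilon)^2=0$, the product $M_\epsilon$ is again a single polynomial in $\chi_\epsilon$ with at most one factor $d\chi_\epsilon$, so no extra multiplication theorem beyond the single-factor lemma is needed.

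One step as written would fail: for a point $x\notin\supp\mathcal F$ you claim $\chi_\epsilon\equiv 1$ near $x$ for small $\epsilon$, but the section $s$ defining $\chi_\epsilon=\chi(|s|^2/\epsilon)$ is only required to satisfy $\{s=0\}\supseteq\supp\mathcal F$, so this is false at points of $\{s=0\}\setminus\supp\mathcal F$; your argument only yields support on $\{s=0\}$. The fix is the one the paper uses: first establish that the limit is independent of the choice of $\chi$ and $s$ (which your machinery does deliver), and then, locally, re-choose $s$ so that $\{s=0\}=\supp\mathcal F$ exactly before running the support argument. With that patch the proposal is correct.
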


The Chern currents \eqref{eq:chernCurrent} are pseudomeromorphic in the sense of
\cite{AW2}, see Theorem \ref{thm:existence2}, which means that they have a
geometric nature similar to closed positive (or normal) currents, see
Section \ref{hemma}. We let
\begin{equation*}
    c^{\Res}(E,D) = 1 + c_1^{\Res}(E,D) + c_2^{\Res}(E,D) + \cdots.
\end{equation*}
The first
nontrivial component of $c^{\Res}(E,D)$ is (the current of
integration along) a cycle, see Theorem \ref{cykla} below.
We do not know whether Chern currents of higher degree are of order $0$ in general.

\begin{remark} \label{rem:bidegree}
If all the connections $D_k$ are $(1,0)$-connections, \textit{i.e.} the
$(0,1)$-part of each $D_k$ equals $\dbar$, then so are
the connections $\widehat{D}_k^\epsilon$. However, even if the $D_k$ are Chern connections, the $\widehat{D}_k^\epsilon$
are not Chern connections in general. Thus, it might be the case that the involved forms and currents in \eqref{eq:chernCurrent} contain terms of bidegree $(\ell + m,\ell-m)$ with $m > 0$
(but only when $\ell > \codim \mathcal{F}$ by Theorem~\ref{cykla} below).
\end{remark}

Our construction of Chern currents is inspired by the paper \cite{BB} by Baum and Bott,
where singular holomorphic foliations are studied by expressing characteristic classes
associated to a foliation as certain cohomological residues, more
precisely as push-forwards
of cohomology classes living in the singular set of the foliation.
A key point in the proofs in \cite{BB} are the concepts of connections compatible with and fitted to a complex of vector bundles.
One may check that their constructions of fitted connections
(with some minor adaptations) correspond to connections of the form \eqref{nordenstam}.
For the results in \cite{BB}, it was sufficient to consider Chern forms associated to connections \eqref{nordenstam} for $\epsilon$ small enough,
but fixed, while in the present paper, we study the limit of such forms when $\epsilon \to 0$.

\begin{example}\label{modren}
Let us compute $c^{\Res}(E,D)$ when $\mathcal
F$ is the structure sheaf $\Ok_Z$ of a divisor $Z\subset X$, defined by
a holomorphic section $s$ of a holomorphic line bundle $L$ over $X$, and
$(E,\varphi)$ is the locally free resolution
\begin{equation}\label{fadren}
        0 \to \Ok(L^*) \stackrel{s}{\to} \Ok\to \Ok_Z \to 0.
      \end{equation}
Assume that $L$ is equipped with a connection $D_L$; equip $E_1 = L^*$ with the
induced dual connection $D_{L^*}$, and $E_0$ with the trivial connection.
The minimal inverse of $s$ is $1/s$ and $D s = D_L s$,
so $\widehat D^\epsilon_1 = \chi_\epsilon (D_Ls/s) + D_{L^*}$, and  $\widehat
D^\epsilon_0$ is the trivial connection.
The curvature form of $\widehat D_1^\epsilon$ equals
$\widehat \Theta^\epsilon_1 = d\chi_\epsilon \wedge (D_Ls/s) -
(1-\chi_\epsilon)\Theta_L$,
where $\Theta_L$ is the curvature form of $D_L$ (which equals minus the curvature form of $D_{L^*}$).
By an appropriate formulation of the Poincar\'e-Lelong formula,
\begin{equation}\label{oklart}
  \lim_{\epsilon\to 0}d\chi_\epsilon \wedge \frac{D_Ls}{s} = 2\pi i [Z],
\end{equation}
where $[Z]$ is the current of integration along (the cycle of) $Z$.
Note that
\[
  c(E, \widehat D^\epsilon) = c(E_1, \widehat D_1^\epsilon)^{-1} = 1-
  c_1(E_1, \widehat D_1^\epsilon) + c_1(E_1,\widehat
  D_1^\epsilon)^2-\cdots.
\]
Thus, since $\Theta_L$ is smooth,
\begin{equation}\label{jogga}
  c^{\Res}_1(E, D) = \lim_{\epsilon\to 0} -c_1(E_1, \widehat
  D^\epsilon_1)
 = -\lim_{\epsilon\to 0}\frac{i}{2\pi}\widehat\Theta_1^\epsilon = 
  [Z].
\end{equation}
Thus, the first Chern current coincides
with $[Z]$, which can be seen as a canonical representative of $c_1(\Ok_Z)$ with
support on $\supp \Ok_Z$.

By further calculations of terms of all degrees, one can show that
\[
  c^{\Res}(E,D) = 1 + [Z](1 + c_1(L,D) + \dots + c_1^{n-1}(L,D)).
\]
\end{example}

Our next result is an explicit description of the first nontrivial
Chern current $c_p^{\Res}(E, D)$ in the case when $\mathcal F$ has pure
codimension $p$, \textit{i.e.} $\supp \mathcal F$ has pure dimension $\dim
X-p$, that generalizes \eqref{jogga}.
Recall that the \emph{(fundamental) cycle} of $\mathcal{F}$
is the cycle
\begin{equation}\label{eq:fundcycle}
    [\mathcal{F}] = \sum_i m_i [Z_i]
\end{equation}
(considered as a current of integration), where $Z_i$ are the
irreducible components of $\supp \mathcal{F}$, 
and $m_i$ is the \emph{geometric multiplicity} of $Z_i$ in
$\mathcal{F}$, see \emph{e.g.}\ \cite[Chapter~1.5]{Fulton}.

\begin{theorem} \label{cykla}
  Assume that $\mathcal{F}$ is a coherent analytic sheaf of pure
codimension $p>0$ that admits a locally free resolution $(E, \varphi)$
of the form \eqref{eq:Ecomplex}.  Moreover, assume that each $E_k$ is
equipped with a Hermitian metric and a $(1,0)$-connection $D_k$.  Then
\begin{equation*}
  c_p^{\Res}(E,D) = (-1)^{p-1}(p-1)![\mathcal F].
\end{equation*}
Moreover
\begin{equation}\label{hajka}
  c_\ell^{\Res}(E,D) = 0 \quad \text{ for } 0 < \ell < p
\end{equation}
and
\begin{equation}\label{bajka}
  c_{\ell_1}^{\Res}(E,D) \wedge \dots \wedge c_{\ell_m}^{\Res}(E,D) = 
  0  \quad \text{ for } m \geq 2 \text{ and } 0  < \ell_1 + \dots + \ell_m \leq p.
\end{equation}
Here the Chern currents on the left hand sides are defined by
\eqref{eq:chernCurrent}. 
\end{theorem}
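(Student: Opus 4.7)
The plan is to carry out an explicit curvature expansion, split the resulting Chern forms into ``smooth'' and ``residue'' parts, and identify the residue parts with a multiple of the fundamental cycle via the authors' generalized Poincar\'e--Lelong formula. Writing $A := \bigoplus_k \sigma_k D\varphi_k \in \End(E)$, which is of pure bidegree $(1,0)$ since each $\varphi_k$ is holomorphic and each $D_k$ is a $(1,0)$-connection, the curvature of $\widehat{D}^\epsilon = D - \chi_\epsilon A$ is
\begin{equation*}
  \widehat{\Theta}^\epsilon = \Theta_D - \chi_\epsilon\, DA + \chi_\epsilon^2\, A^2 - d\chi_\epsilon \wedge A.
\end{equation*}
Since $(d\chi_\epsilon)^2 = 0$, every monomial in the universal expansion of $c_\ell(E, \widehat D^\epsilon)$ (or of any product of such) contains at most one factor $d\chi_\epsilon$; the expansion thus splits into a $d\chi_\epsilon$-free part and a $d\chi_\epsilon$-linear part.

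The first step is to dispose of the $d\chi_\epsilon$-free part. On $X \setminus \supp\mathcal{F}$ it converges as $\epsilon \to 0$ to the Chern form of the ``fitted'' connection $D - A$ in the Baum--Bott sense \cite{BB}; a basic property of fitted connections to an exact complex is that $c_\ell$ vanishes pointwise for $\ell < p = \codim\mathcal{F}$. Combined with the pseudomeromorphic structure of Theorem \ref{thm:existence2} and the dimension principle, this shows that the $d\chi_\epsilon$-free part contributes nothing to $c_\ell^{\Res}(E, D)$ for $\ell \leq p$ and to any monomial in \eqref{bajka} with $\sum \ell_i \leq p$ and $m \geq 2$.

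The second step is to treat the singular $d\chi_\epsilon$-linear part, a sum of terms $d\chi_\epsilon \wedge \alpha_\epsilon$ with $\alpha_\epsilon$ smooth in $A$, $\chi_\epsilon$ and the smooth curvatures. The authors' generalized Poincar\'e--Lelong formula identifies such limits with supertraces of the Andersson--Wulcan residue current $R$ of $(E,\varphi)$ paired with smooth forms. For $\mathcal{F}$ of pure codimension $p$, $R$ is concentrated in degree $\geq p$, so in total degree $< p$ this singular part already vanishes --- completing both \eqref{hajka} and the corresponding singular contributions in \eqref{bajka} --- while in degree exactly $p$ only the top component $R^p$ survives. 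A direct supertrace computation (using $\sigma\varphi + \varphi\sigma = \Id$ off $\supp\mathcal{F}$ and the Poincar\'e--Lelong identification of $R^p$ with $[\mathcal{F}]$) gives $\lim_\epsilon \ch_p(E, \widehat D^\epsilon) = [\mathcal F]$. Newton's identity $k c_k = \sum_{i=1}^k (-1)^{i-1} c_{k-i} s_i$, combined with \eqref{hajka} and $s_p = p!\, \ch_p$, then collapses in degree $p$ to
\begin{equation*}
  c_p^{\Res}(E,D) = (-1)^{p-1}(p-1)!\,\lim_\epsilon \ch_p(E,\widehat D^\epsilon) = (-1)^{p-1}(p-1)![\mathcal F].
\end{equation*}

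The step I expect to be hardest is the current-level analysis near $\supp\mathcal{F}$: Baum--Bott vanishing for fitted connections is pointwise on the complement of the support, but one must control mass appearing on the singular set itself in the limit, and in parallel verify that the $\partial\chi_\epsilon$-component of $d\chi_\epsilon$ contributes only through bidegrees compatible with the supertrace identification with $R$, so that no extraneous bidegrees remain. The combinatorial extraction of the coefficient $(-1)^{p-1}(p-1)!$ via Newton is clean once the lower Chern character currents are known to vanish, but pinning down that these vanish as currents (not merely in cohomology) is what carries the essential work.
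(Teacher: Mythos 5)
Your overall architecture matches the paper's: expand the curvature of $\widehat D^\epsilon$, kill the $d\chi_\epsilon$-free part by the Baum--Bott Whitney vanishing for the compatible (fitted) connection $\widetilde D = D - A$ together with the dimension principle, handle degrees $\ell<p$ by the dimension principle, and recover $c_p^{\Res}$ from $\ch_p^{\Res}$ via Newton's identities once the lower Chern character currents and the products in \eqref{bajka} are known to vanish. Those parts are essentially the paper's Lemma~\ref{struntsak}, Lemma~\ref{lma:OdimensionPrinciple}, and the deduction of Theorem~\ref{thm:mainGeneral} from Theorem~\ref{saltkar}.

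However, there is a genuine gap at the decisive step, which you compress into ``a direct supertrace computation (using $\sigma\varphi+\varphi\sigma=\Id$ \dots) gives $\lim_\epsilon \ch_p(E,\widehat D^\epsilon)=[\mathcal F]$.'' The $d\chi_\epsilon$-linear part of $\ch_p(E,\widehat D^\epsilon)$ is, up to constants and $O_{Z,p,\epsilon}$-errors, $\dbar\chi_\epsilon^p\wedge\sum_k(-1)^k\tr\bigl(\sigma_kD\varphi_k(\dbar\sigma_kD\varphi_k)^{p-1}\bigr)$: each summand involves $p$ repeated factors of $\sigma_k$ and $D\varphi_k$ at the \emph{single} level $k$. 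The residue current $R^k_{k+p}$, to which the generalized Poincar\'e--Lelong formula \eqref{eq:residueCurrentFundCycle} applies, is by definition the limit of $\dbar\chi_\epsilon\wedge\sigma_{k+p}\dbar\sigma_{k+p-1}\cdots\dbar\sigma_{k+1}$, a \emph{staircase} of minimal inverses at $p$ consecutive levels. The Poincar\'e--Lelong formula does not identify limits of arbitrary $d\chi_\epsilon$-linear curvature monomials with traces of $R$; one must first algebraically convert the former expression into the latter. This is the content of the paper's Lemma~\ref{klunga}: an induction on the homotopy identities \eqref{identitet}, \eqref{dstreck}, \eqref{dexakt}, \eqref{kalkon}, in which the boundary terms $\rho^{r,s}_{k,m}$ produced at each step only cancel after taking the \emph{alternating sum} over $k$ and using the super-trace sign rule \eqref{spar}; individual levels $k$ do not separately converge to residue currents. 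This rearrangement is the technical heart of the proof (several pages), and your sketch neither identifies it nor supplies a substitute; the difficulties you flag as hardest (mass concentrating on the support, the $\partial\chi_\epsilon$-component) are real but are dispatched comparatively easily in the paper via Lemma~\ref{lma:PMexistence} and the $O_{Z,\ell,\epsilon}$ calculus.
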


In case $\mathcal{F}$ has codimension $p$, but not necessarily
\emph{pure} codimension $p$, then Theorem~\ref{cykla} still holds if
we  replace the first equation by
\begin{equation} \label{orenstrumpa}
        c_p^{\Res}(E,D) = (-1)^{p-1}(p-1)![\mathcal F]_p,
\end{equation}
where $[\mathcal F]_p$ denotes the part of $[\mathcal F]$ of codimension $p$,
\textit{i.e.} in \eqref{eq:fundcycle}, one only sums over the components $Z_i$ of codimension $p$.

In particular,  $c_p^{\Res}(E, D)$ is independent of the choice of
Hermitian metrics and $(1,0)$-connections on $(E,\varphi)$. Moreover, it follows that on cohomology level
\begin{equation}\label{janakippo}
c_p(\mathcal F) = (-1)^{p-1}(p-1)![\mathcal F],
  \end{equation}
where now the right hand side should be interpreted as a de Rham
class.
When $\mathcal{F}$ is the pushforward of a vector bundle from a
subvariety, that \eqref{janakippo} holds
is a well-known consequence of the Grothendieck-Riemann-Roch theorem, \emph{cf.}\
\cite[Examples~15.2.16 and~15.1.2]{Fulton}.

The proof of Theorem \ref{cykla} relies on a generalization of the
Poincar\'e-Lelong formula. Given a complex \eqref{eq:Ecomplex}
equipped with Hermitian metrics, Andersson and the second author
defined in \cite{AW1} an associated so-called residue current
$R^E = R = \sum R_k$ with support on $\supp \mathcal{F}$, where $R_k$ is
a $\Hom(E_0,E_k)$-valued $(0,k)$-current for $k = 0,\dots,N$, see
Section~\ref{ssect:AW1}.
The construction
involves the minimal
inverses $\sigma_k$ of $\varphi_k$. If $(E, \varphi)$ is the
complex \eqref{fadren}, then $R^E$ coincides with the residue current
$\dbar (1/s) = \lim_{\epsilon \to 0} \dbar \chi_\epsilon (1/s)$;  more
generally if
$(E,\varphi)$ is the Koszul complex of a complete intersection,
then $R^E$ coincides with the classical Coleff-Herrera residue
current, \cite{CH}.
Using residue currents, we can write the Poincar\'e-Lelong formula
\eqref{oklart} as
\begin{equation*}
  \dbar \frac{1}{s} \wedge D_Ls = 2\pi i [Z];
\end{equation*}
indeed, the left hand side in \eqref{oklart} equals $\lim
\dbar \chi_\epsilon \wedge (D_Ls)/s$.
Given a $\Hom(E_\ell,E_\ell)$-valued current $\alpha$, let $\tr \alpha$ denote the trace of $\alpha$.
In \cite{LW2, LW3} we proved the following generalization of the
Poincar\'e-Lelong formula:

\noindent
\emph{Assume that $R^E$ is the residue current associated with a
finite locally free resolution $(E,\varphi)$ of a coherent analytic
sheaf $\mathcal F$ of pure codimension $p$. Moreover assume that $D$
is a connection on $\End E$ induced by arbitrary $(1,0)$-connections
on $E_k$.  Then}
\begin{equation} \label{eq:residueCurrentFundCycle}
  \frac{1}{(2\pi i)^p p!} \tr (D\varphi_1 \cdots D\varphi_p R_p)
 = [\mathcal{F}].
\end{equation}
If $\mathcal F$ has codimension $p$, but not necessarily pure
codimenion, \eqref{eq:residueCurrentFundCycle} still holds if we
replace $[\mathcal F]$ by $[\mathcal F]_p$,
\emph{cf.} \cite[Theorem~1.5]{LW2}.  In view of this, Theorem \ref{cykla}, as
well as \eqref{orenstrumpa}, are direct consequences of the following
explicit description of (the components of low degree of) 
$c^{\Res}(E, D)$ in terms of $R^E$.

\begin{theorem} \label{thm:main}
 Assume that $\mathcal{F}$ is a coherent analytic sheaf of codimension
  $p>0$ that admits a locally free resolution
     $(E,\varphi)$ of
     the form \eqref{eq:Ecomplex}.
    Moreover, assume that each $E_k$ is equipped with a Hermitian
    metric and a $(1,0)$-connection $D_k$.
Let $R$ be the associated residue current and $D$ the connection on $\End
E$ induced by the $D_k$.
Then
    \begin{equation*}
        c_p^{\Res}(E,D) = \frac{(-1)^{p-1}}{(2\pi i)^p p} \tr
        (D\varphi_1 \cdots D\varphi_p R_p).
      \end{equation*}
      Moreover \eqref{hajka} and \eqref{bajka} hold.
\end{theorem}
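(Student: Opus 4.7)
The plan is to reduce the formula for $c_p^{\Res}(E,D)$ to the Chern character, and then identify the resulting limit supertrace with the residue current, after first establishing the vanishing statements \eqref{hajka} and \eqref{bajka}. The argument naturally has three stages: a dimension-principle vanishing argument, a reduction from $c_p^{\Res}$ to $\ch_p^{\Res}$, and the concrete computation of the limit in terms of $R_p$.

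The first stage exploits the $(1,0)$-assumption together with pseudomeromorphic calculus. Since each $\widehat{D}_k^\epsilon$ is a $(1,0)$-connection, every curvature $\widehat{\Theta}_k^\epsilon$ is a sum of forms of bidegree $(2,0)$ and $(1,1)$, so each Chern form $c_\ell(E,\widehat{D}^\epsilon)$ decomposes into components of bidegree $(\ell + m, \ell - m)$ with $m \geq 0$ and hence has antiholomorphic degree at most $\ell$. By Theorems~\ref{thm:existence} and~\ref{thm:existence2}, the limit currents and their wedge products in \eqref{eq:chernCurrent} are pseudomeromorphic and supported on $\supp \mathcal F$, which has codimension at least $p$. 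The dimension principle for pseudomeromorphic currents immediately forces \eqref{hajka} and also kills the components of the product in \eqref{bajka} whose antiholomorphic degree is strictly less than $p$. The boundary case $\ell_1 + \dots + \ell_m = p$ with $m \geq 2$ forces each $\ell_i < p$, and the surviving $(p,p)$-contribution of the product is assembled from the $(\ell_i, \ell_i)$-parts of the individual factors, which vanish in the limit by \eqref{hajka}.

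Once the vanishings are available, Newton's identity
$c_p = (-1)^{p-1}(p-1)!\,\ch_p + P(c_1,\dots,c_{p-1})$,
with $P$ a polynomial without constant term, applied to $c_p(E,\widehat{D}^\epsilon)$ and passed to the limit, reduces the $P$-term to products of Chern currents covered by \eqref{bajka} and hence to zero. Using the $\mathbb Z_2$-grading on $E = \bigoplus E_k$ by parity of $k$, one therefore obtains
\[
c_p^{\Res}(E,D) = (-1)^{p-1}(p-1)! \cdot \frac{1}{p!}\Bigl(\frac{i}{2\pi}\Bigr)^p \lim_{\epsilon \to 0} \sum_{k = 0}^{N} (-1)^k \tr(\widehat{\Theta}_k^\epsilon)^p,
\]
so that the proof reduces to identifying the $(p,p)$-part of this limit supertrace with the appropriate scalar multiple of $\tr(D\varphi_1 \cdots D\varphi_p R_p)$.

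This final identification is the main obstacle. Expanding
\[
\widehat{\Theta}_k^\epsilon = \Theta_k - d\chi_\epsilon \wedge \sigma_k D\varphi_k - \chi_\epsilon D(\sigma_k D\varphi_k) + \chi_\epsilon^2 (\sigma_k D\varphi_k)^2
\]
and inserting into the alternating trace produces a large combinatorial sum. By the bidegree analysis of the first stage, only the $(p,p)$-part survives in the limit, which forces the surviving terms to contain exactly one factor of $\dbar\chi_\epsilon$, the sole source of residue-type behavior as $\epsilon \to 0$, paired with $p - 1$ copies of $\dbar\sigma$ arising from $\dbar$ acting on the other entries. Using the cyclicity of the trace together with the defining identities of the minimal inverse $\sigma$, this sum should collapse to an expression matching the standard $\epsilon$-regularization of $\tr(D\varphi_1\cdots D\varphi_p R_p)$ from \cite{AW1}; taking $\epsilon \to 0$ and tracking the numerical factors then produces the constant $(-1)^{p-1}/((2\pi i)^p p)$ in the stated formula. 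The hard part is precisely this combinatorial matching: while bidegree and residue-type constraints dictate the shape of the answer, verifying that the full expansion of the $p$-th power of the curvature reduces exactly to the clean AW expression requires a careful unwinding of both the curvature expansion and the construction of $R_p$.
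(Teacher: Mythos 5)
Your overall strategy matches the paper's: prove the vanishings by the dimension principle, reduce $c_p^{\Res}$ to $\ch_p^{\Res}$ via Newton's identities, and compute the limit of the alternating trace of $(\widehat\Theta_k^\epsilon)^p$. However, there are two genuine gaps. First, your argument for the boundary case of \eqref{bajka} (when $\ell_1+\dots+\ell_m=p$) is flawed: the current in \eqref{eq:chernCurrent} is the limit of a \emph{product of forms}, not a product of limit currents, so you cannot conclude it vanishes from the vanishing of the individual limits $c_{\ell_i}^{\Res}(E,D)$. The mechanism that actually kills these products is that, modulo terms negligible by the dimension principle, each $(\ell_i,\ell_i)$-component carries a factor $\dbar\chi_\epsilon$, and $(\dbar\chi_\epsilon)^2=0$; this is how the paper proves \eqref{glashus4}.

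Second, and more seriously, the "combinatorial matching" you defer is the entire content of the paper's proof, and your sketch of it is incomplete in two ways. (i) The terms in the expansion of $\sum_k(-1)^k\tr(\widehat\Theta_k^\epsilon)^p$ containing no $\dbar\chi_\epsilon$, namely $\chi_\epsilon^p\tr\bigl(\dbar(\sigma_kD\varphi_k)-(\Theta_k)_{(1,1)}\bigr)^p$ up to sign, are not supported near $Z$ and do not individually tend to zero; their cancellation in the alternating sum requires the Whitney formula of Baum--Bott for the compatible connection $\widetilde D_k=-\sigma_kD\varphi_k+D_k$ (Lemmas~\ref{lma:compatible} and~\ref{lma:BBvanishing}), which your bidegree/support argument does not supply. (ii) Even after isolating the surviving term, one obtains $\dbar\chi_\epsilon^p\wedge\sum_k(-1)^k\tr\bigl(\sigma_kD\varphi_k(\dbar\sigma_kD\varphi_k)^{p-1}\bigr)$, in which all indices oscillate between levels $k-1$ and $k$; this is \emph{not} visibly the regularization of $\tr(D\varphi_1\cdots D\varphi_pR_p)$, since $R_p=\lim\dbar\chi_\epsilon\wedge\sigma_p\dbar\sigma_{p-1}\cdots\dbar\sigma_1$ climbs through consecutive levels $0,1,\dots,p$. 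Converting one into the other is the paper's Lemma~\ref{klunga}, a nontrivial double induction on the identities \eqref{exakt}, \eqref{dstreck}, \eqref{dexakt}, \eqref{identitet} and \eqref{kalkon}, with careful control of boundary terms and of error terms lying in $O_{Z,\ell,\epsilon}+\dbar O_{Z,\ell,\epsilon}$. Asserting that cyclicity of the trace and the defining properties of $\sigma$ make the sum "collapse" does not substitute for this argument, and without it the constant and the very appearance of $R_p$ are unjustified.
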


\smallskip

In fact, we formulate and prove our results in a slightly more general
setting. We consider the Chern class $c(E)$ of a generically exact complex of vector
bundles $(E, \varphi)$
that is not necessarily a locally free resolution of a coherent
sheaf. Theorem~\ref{thm:existence2} below asserts that $c(E, \widehat D^\epsilon)$ as well
as products of such currents have well-defined limits when $\epsilon \to 0$ and represent the corresponding (products of) Chern
classes. In particular, Theorem~\ref{thm:existence} follows.
In Theorem \ref{thm:mainGeneral}, if $(E, \varphi)$ is exact outside a variety of codimension $p$, we give an explicit description of
$c_p^{\Res}(E, D):= \lim_{\epsilon\to 0} c_p(E, \widehat D^\epsilon)$
terms of residue currents that
generalizes Theorem~\ref{thm:main}.
From this and a more general version of the Poincar\'e-Lelong formula
\eqref{eq:residueCurrentFundCycle} it follows that
if the cohomology groups are of pure codimension $p$, then
$c_p^{\Res}(E, D) = (-1)^{p-1}(p-1)![E]$, where $[E]$ is the cycle of
$(E, \varphi)$, see Corollary~\ref{lydia} and \eqref{cycleofcomplex}.

\smallskip
Our results could alternatively be formulated in term of the Chern
character $\ch(E)$ of $E$. From Theorem~\ref{thm:existence}, for
$\ell>0$, we obtain
a current $\ch^{\Res}_\ell(E, D)$ that represents the $\ell^\mathrm{th}$ graded
piece $\ch_\ell(E)$ of the Chern character, see Section
\ref{chernsec}.
Theorems \ref{cykla} and \ref{thm:main} are then equivalent to
    \begin{align*}
        \ch_p^{\Res}(E,D) &= \frac{1}{(2\pi i)^p p!} \tr(D\varphi_1 \cdots D\varphi_p R_p) = [\mathcal F],\\
   \ch_\ell^{\Res}(E,D) &= 0 \quad \text{ for } \ell < p, \ \text{and}\\
   \ch_{\ell_1}^{\Res}(E,D) \wedge \dots \wedge \ch_{\ell_m}^{\Res}(E,D) &= 0
   \quad \text{ for } m \geq 2 \ \text{and} \ \ell_1 + \dots + \ell_m
   \leq p,
 \end{align*}
 see Theorem \ref{saltkar} and Remark \ref{badkar}.

\smallskip

We refer to the currents in Theorem~\ref{thm:existence} as Chern
currents, in analogy with the usual Chern forms representing Chern
classes.  In works of Bismut, Gillet, and Soulé \cite{BGS4,BGS5}
appears the similarly named concept of Bott-Chern currents, that are
certain explicit $dd^c$-potentials in a transgresssion formula in a
Grothendieck-Riemann-Roch theorem, and not directly related to our
currents. 

There are some similarities between our results and results by Harvey and
Lawson. In \cite{HL} they study characteristic classes of
morphisms $\varphi:E_0\to E_1$ of vector bundles, and only in
very special situations there is overlap between their results and
ours. We remark that the connection \eqref{nordenstam} that plays
a crucial role in our work essentially appears and is important
in \cite{HL}, see, in particular, \cite[Section I.4]{HL}.

Chern classes of coherent sheaves, without the assumption of the
existence of a global locally free resolution, were studied in the
thesis of Green, \cite{Green}, as well as in various recent papers,
including \cite{Gri,Hos,Hos2,Qiang,BSW,Wu}. Several of these papers
also concern classes in finer cohomology theories than de Rham
cohomology, as for example (rational or complex) Bott-Chern or Deligne
cohomology. 

In the present paper, our focus has been to find explicit
representatives of Chern classes of a coherent sheaf with support on
the support of the sheaf, a type of result which as far as we can
tell, none of the above mentioned works seems to consider. By
incorporating the construction of residue currents associated with a
twisted resolution from \cite{JL}, it might be possible to extend our
results to arbitrary coherent sheaves, without any assumptions about
the existence of a global locally free resolution. We plan to explore
this in future work. The currents we study provide representatives of
the Chern classes in de Rham cohomology. Our methods unfortunately do
not seem to yield representatives in the finer cohomology theories
mentioned above, since for example Chern classes in complex Bott-Chern
cohomology as in \cite{Qiang,BSW}, are naturally obtained from Chern
forms of the Chern connection of a hermitian metric, while our
construction, building on the techniques in \cite{BB}, involve Chern
forms of connections that are not Chern connections of a hermitian
metric. 

\smallskip

The paper is organized as follows. In Section \ref{stromprelim}
we give some necessary background on (residue) currents.
In Section \ref{trean} we describe
Chern forms and Chern characters,
and in Section \ref{taget}
we discuss compatible connections. The proofs of (the generalized
versions of) Theorems
\ref{thm:existence} and \ref{thm:main} occupy Sections \ref{gula} and
\ref{chernsec}, respectively. Finally in Section \ref{triumf} we
compute $c^{\Res}(E, D)$ for an explicit choice of a locally free
resolution $(E,\varphi)$ of a coherent sheaf $\mathcal F$. In particular, we
compute $c^{\Res}_\ell(E, D)$ for $\ell>\codim \mathcal F$ in this case.

\subsection*{Acknowledgements}

This paper is very much inspired by an ongoing joint project with
Lucas Kaufmann, which aims to understand Baum-Bott residues
in terms of (residue) currents. We are greatly
indebted to him for many valuable discussions on this topic. We would
also like to thank Dennis Eriksson for many important discussions and
helpful comments on a previous version of this paper.

\section{Currents associated with complexes of vector bundles}\label{stromprelim}

We say that a function $\chi:\R_{\geq 0}\to \R_{\geq 0}$ is a \emph{smooth approximand
  of the characteristic function} $\chi_{[1,\infty)}$ of the interval
  $[1,\infty)$ and write
  \[
    \chi \sim \chi_{[1,\infty)}
  \]
 if $\chi$ is smooth and $\chi(t) \equiv 0$ for $t \ll 1$ and
$\chi(t) \equiv 1$ for $t \gg 1$.
Note that if $\chi \sim \chi_{[1,\infty)}$ and $\hat \chi = \chi^\ell$,
then $\hat \chi \sim \chi_{[1,\infty)}$ and
\begin{equation}\label{information}
d\hat\chi = \ell\chi^{\ell-1} d\chi.
\end{equation}

\subsection{Pseudomeromorphic currents}\label{hemma}

Let $f$ be a (generically nonvanishing) holomorphic function on a
(connected)
complex manifold $X$. 
Herrera and Lieberman \cite{HeLi}, proved that the \emph{principal value}
\begin{equation*}
\lim_{\epsilon\to 0}\int_{|f|^2>\epsilon}\frac{\xi}{f}
\end{equation*}
exists for test forms $\xi$ and defines a current $[1/f]$.
It follows that $\dbar[1/f]$ is a current with support on the zero set
$Z(f)$ of $f$; such a current is called a \emph{residue current}.
Assume that $\chi\sim\chi_{[1,\infty)}$ and that
 $F$ is a (generically
nonvanishing) section of a Hermitian vector bundle such that
$Z(f)\subseteq \{ F = 0 \}$.
Then
\begin{equation}\label{strunta}
  [1/f] = \lim_{\epsilon\to 0} \frac{\chi(|F|^2/\epsilon)}f ~~~~~ \text{
    and } ~~~~~
  \dbar[1/f] = \lim_{\epsilon\to 0} \frac{\dbar\chi(|F|^2/\epsilon)}f,
  \end{equation}
  see \emph{e.g.} \cite{AW3}.
  In particular, the limits are independent of $\chi$ and $F$.

In the literature there are various generalizations of residue
currents and principal value currents.
In particular, Coleff and Herrera \cite{CH} introduced
products like
\begin{equation}\label{apan2}
[1/f_1]\cdots [1/f_r] \dbar[1/f_{r + 1}]\wedge \cdots
\wedge \dbar[1/f_m].
\end{equation}
In order to obtain a coherent approach to questions about residue and
principal value currents was introduced in \cite{AW2} the sheaf
$\PM_X$ of {\it pseudomeromorphic currents} on $X$, consisting of
direct images under holomorphic mappings of products of test forms and
currents like \eqref{apan2}. See \emph{e.g.} \cite[Section~2.1]{AW3} for a
precise definition; in particular it follows from the definition that
$\PM$ is closed under push-forwards of modifications. Also, we refer
to \cite{AW3} for the results mentioned in this subsection.  The sheaf
$\PM_X$ is closed under $\dbar$ and under multiplication by smooth
forms.  Pseudomeromorphic currents have a geometric nature, similar to
closed positive (or normal) currents.  For example, the {\it dimension
principle} states that if the pseudomeromorphic current $\mu$ has
bidegree $(*,p)$ and support on a variety of codimension strictly
larger than $p$, then $\mu$ vanishes.

The sheaf $\PM_X$ admits natural restrictions to constructible subsets.
In particular, if $W$ is a subvariety of the open subset $\mathcal{U}\subseteq X$,
and $F$ is a section of a vector bundle such that $\{F = 0\} = W$, then the restriction
to $\mathcal{U}\setminus W$ of a pseudomeromorphic current $\mu$ on $\mathcal{U}$ is
the pseudomeromorphic current 
\[
    \1_{\mathcal{U}\setminus W} \mu := \lim_{\epsilon \to 0} \chi(|F|^2/\epsilon) \mu|_{\mathcal{U}},
\]
where $\chi \sim \chi_{[1,\infty)}$ as above.
This definition is independent of the choice of $F$ and $\chi$.

A pseudomeromorphic current $\mu$ on $X$ is said to have the \emph{standard extension property} (SEP)
if $\1_{\mathcal{U}\setminus W} \mu = \mu|_{\mathcal{U}}$
for any subvariety $W \subseteq \mathcal{U}$ of positive codimension, where
$\mathcal{U} \subseteq X$ is any open subset.
By definition, it follows that if $\mu$ has the SEP and $F\not\equiv 0$ is any holomorphic section of a vector bundle,
then
\begin{equation} \label{eq:SEP}
    \lim_{\epsilon\to 0} \chi(|F|^2/\epsilon) \mu = \mu.
\end{equation}

\subsection{Superstructure and connections on a complex of vector bundles}\label{smaragd}

Let $(E,\varphi)$ be a complex
\begin{equation}\label{turkos}
  0 \xrightarrow[]{} E_N
  \xrightarrow[]{\varphi_N} E_{N-1}
  \xrightarrow[]{\varphi_{N-1}}
  \cdots \xrightarrow[]{\varphi_2}
  E_1 \xrightarrow[]{\varphi_1} E_0 \xrightarrow[]{} 0,
\end{equation}
of vector bundles over $X$.  As in \cite{AW1}, see also \cite{LW2}, we
will consider the complex $(E,\varphi)$ to be equipped with a
so-called superstructure, \textit{i.e.} a $\mathbb{Z}_2$-grading, which splits
$E:= \oplus E_k$ into odd and even parts $E^ + $ and $E^-$, where 
$E^ + = \oplus E_{2k}$ and $E^- = \oplus E_{2k + 1}$.
Also $\End E$ gets a superstructure by letting the even part be
the endomorphisms preserving the degree, and the odd part the
endomorphisms switching degrees.

This superstructure affects how form- and current-valued endomorphisms
act.  Assume that $\alpha = \omega\otimes \gamma$ is a section of
$\E^\bullet (\End E)$, where $\gamma$ is a holomorphic section of
$\Hom(E_\ell,E_k)$, and $\omega$ is a smooth form of degree $m$.  Then
we let $\deg_f \alpha = m$ and $\deg_e \alpha = k-\ell$ denote the
\emph{form} and \emph{endomorphism} degrees, respectively, of
$\alpha$.  The \emph{total} degree is 
$\deg \alpha = \deg_f \alpha + \deg_e \alpha$.  If $\beta$ is a
form-valued section of $E$, \textit{i.e.} $\beta = \eta\otimes\xi$, where $\eta$
is a scalar form, and $\xi$ is a section of $E$, both homogeneous in
degree, then the action of $\alpha$ on $\beta$ is defined by
\begin{equation}
  \label{eq:superaction}
  \alpha(\beta)  := (-1)^{(\deg_e \alpha)(\deg_f \beta)}
  \omega \wedge \eta \otimes \gamma(\xi).
\end{equation}
If furthermore, $\alpha' = \omega'\otimes \gamma'$, where $\gamma'$ is a holomorphic
section of $\End E$, and $\omega'$ is a smooth form, both homogeneous in degree, then
we define
\begin{equation*}
    \alpha \alpha' := (-1)^{(\deg_e \alpha)(\deg_f \alpha')} \omega\wedge \omega' \otimes \gamma \circ \gamma'.
\end{equation*}
For an $(m \times n)$-matrix $A$ and an $(n \times m)$-matrix $B$, we have that $\tr (AB) = \tr (BA)$, while for the morphisms $\alpha$ and $\alpha'$ above,
we get such an equality with a sign due to the superstructure,
\begin{equation}\label{spar}
    \tr (\alpha\alpha') = (-1)^{(\deg \alpha)(\deg \alpha')-(\deg_e \alpha)(\deg_e \alpha')}\tr (\alpha'\alpha),
\end{equation}
see \cite[Equation (2.14)]{LW2}.

Note that $\dbar$ extends in a way that respects the superstructure to act on $\End E$-valued morphisms.
In particular,
\begin{equation}\label{eq:dbarLeibniz}
\dbar(\alpha \alpha') = \dbar \alpha \alpha' + (-1)^{\deg \alpha} \alpha \dbar \alpha'.
\end{equation}

We will consider the situation when $(E, \varphi)$ is equipped with a
 connection $D = D_E = (D_0, \ldots, D_N)$, where $D_k$ is a connection on
$E_k$.
Then there is an induced connection $\oplus D_k$ on $E$, that we also denote
by $D_E$. This in turn induces a connection $D_{\End}$
on $\End E$ that takes the superstructure into account,  defined by 
\begin{equation}\label{DEnd}
D_{\End} \alpha := D_E \circ \alpha - (-1)^{\deg \alpha} \alpha \circ
D_E,
\end{equation}
if $\alpha$ is a $\End E$-valued form.
It satisfies the following Leibniz rule, \cite[Equation (2.4)]{LW2},
\emph{cf.}\ \eqref{eq:dbarLeibniz}
\begin{equation}\label{bord}
D_{\End} (\alpha \alpha') = D_{\End} \alpha \alpha' + (-1)^{\deg \alpha}
\alpha D_{\End} \alpha'.
\end{equation}
To simplify notation, we will sometimes drop the subscript $\End$ and simply
denote this connection by $D$. If $\Theta_k$ denotes the curvature
form of $D_k$, and
$\alpha:E_k\to E_\ell$, then, by \eqref{DEnd},
\begin{equation}\label{karlsborg}
  DD\alpha = 
  \Theta_\ell
  \alpha + (-1)^{\deg \alpha + \deg \alpha
    + 1}  \alpha
  \Theta_k
  = 
  \Theta_\ell\alpha - \alpha\Theta_k.
\end{equation}
The above formulas hold also when $\alpha$ and $\alpha'$ are current-valued instead of form-valued,
as long as the involved products of currents are well-defined.

We let $D_k'$ and $D_k''$ denote the $(1,0)$- and $(0,1)$-parts of
$D_k$, respectively, and we let $D' = (D_k')$ and $D'' = (D''_k)$ denote the
corresponding $(1,0)$- and $(0,1)$-parts of $D_E = (D_k)$.
We say that $D_E$ is a \emph{$(1,0)$-connection} if each $D_k$ is a
$(1,0)$-connection, \textit{i.e.} $D''_k = \dbar$.
We will use the following consequence of \eqref{karlsborg}:
assume that $D_E$ is a $(1,0)$-connection, and $\alpha
: E_k \to E_\ell$ is a holomorphic (or more generally a $\dbar$-closed
form-valued) morphism.
Then
\begin{equation} \label{eq:dbarD}
      \dbar D\alpha = (\Theta_\ell)_{(1,1)} \alpha - \alpha
      (\Theta_k)_{(1,1)},
\end{equation}
where $(\cdot)_{(1,1)}$ denotes the component of bidegree
$(1,1)$.

Since $(E,\varphi)$ is a complex and $\varphi_k$ has odd degree, it follows
from \eqref{bord} that
\begin{equation}\label{exakt}
\varphi_{k-1}D\varphi_k = D\varphi_{k-1} \varphi_k.
\end{equation}

\subsection{Residue currents associated to a
  complex} \label{ssect:AW1}

Let us briefly recall the construction in \cite{AW1}.
Assume that we have a generically exact complex $(E, \varphi)$ of
vector bundles over a complex manifold $X$ of
the form \eqref{turkos},
and assume that each $E_k$ is equipped with some Hermitian metric. If
$Z_k$ is the analytic set where $\varphi_k$
has lower rank than its generic rank, then outside of $Z_k$
the minimal (or Moore-Penrose) inverse
$\sigma_k:E_{k-1}\to E_k$ of $\varphi_k$ is determined by the following properties:
$\varphi_k \sigma_k \varphi_k = \varphi_k$, $\im \sigma_k \perp \im \varphi_{k + 1}$,
and $\sigma_{k + 1} \sigma_k = 0$.
One can verify that $\sigma_k$ is smooth outside of $Z_k$.
Since $\sigma_{k}\sigma_{k-1} = 0$ and
$\sigma_k$ has odd degree, by \eqref{eq:dbarLeibniz},
\begin{equation}\label{dexakt}
\sigma_{k}\dbar \sigma_{k-1} = \dbar \sigma_{k}\sigma_{k-1}.
\end{equation}
Let $Z$ be the set where $(E,\varphi)$ is not pointwise exact. It follows from the definition of $\sigma_k$ that
\begin{equation}\label{identitet}
   \varphi_k\sigma_k + \sigma_{k-1}\varphi_{k-1} = \Id_{E_{k-1}}
\end{equation}
outside $Z$, or more generally outside $Z_k \cup Z_{k-1}$.
Applying \eqref{eq:dbarLeibniz} to \eqref{identitet}, we obtain that
outside $Z$
\begin{equation}\label{dstreck}
\varphi_k\dbar\sigma_k = \dbar \sigma_{k-1}\varphi_{k-1}
\end{equation}
and furthermore applying \eqref{bord} to this equality, we get that
\begin{equation}\label{kalkon}
D\varphi_k\dbar\sigma_k = D\dbar \sigma_{k-1}\varphi_{k-1} + \dbar \sigma_{k-1} D\varphi_{k-1}
 + 
\varphi_k D\dbar \sigma_k.
\end{equation}

\begin{lemma} \label{lma:PMexistence}
    Let $X$, $(E,\varphi)$, $Z$, and $\sigma_k$ be as above. Assume that
    for each $j = 1, \ldots, m$, $s_j$ is an entry of $\sigma_k$, $\partial
    \sigma_k$, or $\dbar\sigma_k$ for some $k$ in
    some local trivialization, and let $s = s_1\cdots s_m$.
    Assume that $\chi\sim\chi_{[1,\infty)}$ and that $F$ is a (generically nonvanishing) holomorphic section of a
    vector bundle over $X$ such that $Z\subset\{ F = 0 \}$.
    Then the limits
    \begin{equation*}
      \lim_{\epsilon\to 0} \chi(|F|^2/\epsilon) s
      \qquad\text{and}\qquad
      \lim_{\epsilon\to 0} \dbar\chi(|F|^2/\epsilon)\wedge s
    \end{equation*}
    exist and define pseudomeromorphic currents on $X$ that are independent
    of the choices of $\chi$ and $F$; the support of the second
    current is contained in $Z$.
    Furthermore,
    \begin{equation*}
        \lim_{\epsilon\to 0} \partial\chi(|F|^2/\epsilon)\wedge s = 0.
    \end{equation*}
  \end{lemma}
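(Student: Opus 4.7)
The strategy is to reduce the lemma to standard residue calculus on a manifold equipped with a simple normal crossings divisor via resolution of singularities. First, I would invoke Hironaka's theorem, in the form used in \cite{AW1}, to produce a proper holomorphic modification $\pi\colon\tilde X\to X$ that is a biholomorphism over $X\setminus Z$, together with a simple normal crossings divisor $D\subset\tilde X$ containing $\pi^{-1}(Z)$, such that locally on $\tilde X$ one has $\pi^*\sigma_k = \alpha_k/f_k$, where $\alpha_k$ is a smooth $\Hom(E_{k-1},E_k)$-valued section and $f_k$ is a holomorphic monomial in coordinates adapted to $D$. This is obtained by principalizing the ideals generated by the minors of $\varphi_k$ and of $\varphi_k^*$ that govern the denominator of the Moore--Penrose inverse.

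With such a modification fixed, each pulled-back factor $\pi^*s_j$ becomes either $\beta_j/g_j$, the $\partial$ of such a quotient, or the $\dbar$ of such a quotient, for smooth $\beta_j$ and monomial $g_j$. The product $\pi^*s$ is then a smooth form wedged with a product of monomial principal-value and residue currents, exactly of the type \eqref{apan2}. Applying \eqref{strunta} factor by factor shows that the regularizations $\chi(|\pi^*F|^2/\epsilon)\,\pi^*s$ and $\dbar\chi(|\pi^*F|^2/\epsilon)\wedge\pi^*s$ converge on $\tilde X$ to pseudomeromorphic currents, independently of $\chi$ and of the choice of $F$ satisfying $Z\subset\{F=0\}$.

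Pushing down by $\pi_*$ then yields the desired limits on $X$, since $\PM_X$ is closed under push-forwards of modifications. For the support statement, observe that outside $Z$ the factor $s$ is smooth, so $\chi(|F|^2/\epsilon)\, s \to s$ there as distributions; differentiating the identity $\dbar(\chi(|F|^2/\epsilon)\, s) = \dbar\chi(|F|^2/\epsilon)\wedge s + \chi(|F|^2/\epsilon)\,\dbar s$ then yields $\dbar\chi(|F|^2/\epsilon)\wedge s \to 0$ on $X\setminus Z$, whence the limit current is supported in $Z$.

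The most delicate part is the vanishing of $\lim_{\epsilon\to 0}\partial\chi(|F|^2/\epsilon)\wedge s$, which I would again establish on $\tilde X$. Writing $|\pi^*F|^2 = |h|^2 u$ with $h$ a monomial and $u$ a smooth positive function, the form $\partial\chi(|h|^2 u/\epsilon)$ carries a factor of $\bar h$ coming from $\partial|h|^2$, whereas the monomial poles in $\pi^*s$ are purely holomorphic in $h$; a reduction to one-variable monomial integrals in polar coordinates then forces the limit to be zero. The main obstacle in the whole proof is the resolution step itself: one must principalize the relevant ideals so that all the minimal inverses $\sigma_k$ simultaneously acquire the monomial form $\alpha_k/f_k$. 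Once this is accomplished, the remaining arguments reduce to routine manipulations with pseudomeromorphic currents on $\tilde X$.
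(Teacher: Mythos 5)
Your proposal is correct, and for the main existence statement it follows essentially the same route as the paper: resolve via Hironaka so that each $\pi^*\sigma_k$ becomes $(1/\gamma_k)\tilde\sigma_k$ with $\gamma_k$ holomorphic and $\tilde\sigma_k$ smooth (the paper cites \cite[Section~2]{AW1} for exactly this), factor $\pi^*s$ as smooth forms over holomorphic denominators, apply \eqref{strunta}, and push forward using that $\PM$ is closed under modifications. Your support argument (Leibniz rule off $Z$, where $s$ is smooth) is a mild variant of the paper's and is fine. The one place you genuinely diverge is the final claim $\lim_{\epsilon\to 0}\partial\chi(|F|^2/\epsilon)\wedge s=0$: the paper avoids any computation by observing that $\mu=\lim\chi_\epsilon s$ is almost semi-meromorphic, hence has the SEP, that $\partial\mu$ again has the SEP by \cite[Theorem~3.7]{AW3}, and then concludes from $\partial(\chi_\epsilon\mu)=\partial\chi_\epsilon\wedge\mu+\chi_\epsilon\partial\mu$ that the limit is $\partial\mu-\partial\mu=0$. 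You instead propose to prove the same vanishing by hand on $\tilde X$, via the antiholomorphic factor $\bar h$ in $\partial(|h|^2u)$ against purely holomorphic monomial poles and an angular/homogeneity argument. That computation is the classical proof of the underlying fact (that $\partial$ of a monomial principal value produces no residue term, which is precisely what \cite[Theorem~3.7]{AW3} packages), so your route is viable; it trades a citation for a several-variable normal-crossings computation that your sketch compresses into ``one-variable monomial integrals,'' and which would need to be written out to handle general multi-indices for $g$ and $h$ and the antiholomorphic dependence of the smooth numerators. The soft SEP argument buys brevity and robustness; your direct argument is more self-contained but is exactly the part where the details still need to be supplied.
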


\begin{proof}
  By Hironaka's theorem there is a holomorphic modification 
$\pi : \tilde{X} \to X$, such that for each $k$, $\pi^*\sigma_k$ is
locally of the form $(1/\gamma_k) \tilde \sigma_k$, where $\gamma_k$
is holomorphic with $Z(\gamma_k)\subset \tilde Z:= \pi^{-1}Z$, and
$\tilde\sigma_k$ is smooth, see \cite[Section~2]{AW1}.  Now, where
$\chi(|\pi^* F|^2/\epsilon)\not\equiv 0$, 
$\dbar \pi^* \sigma_k = (1/\gamma_k) \dbar \tilde \sigma_k$ and
$\partial \pi^* \sigma_k = \partial (1/\gamma_k) \tilde \sigma_k + 
(1/\gamma_k) \partial \tilde \sigma_k$.  
Since each holomorphic derivative 
${\partial/\partial z_i} (1/\gamma_k)$ is a meromorphic function with
poles contained in $\tilde Z$ it follows that $\pi^*s_j$ equals (a sum
of terms of the form) $(1/g_j)\tilde s_j$, where $g_j$ is holomorphic
with $Z(g_j)\subset \tilde Z$, and $\tilde s_j$ is smooth. Thus
$\pi^*s$ equals (a sum of terms of the form) $(1/g)\tilde s$, where
$g$ is holomorphic with $Z(g)\subset \tilde Z$, and $\tilde s$ is
smooth.  In view of \eqref{strunta},
\begin{equation*}
  \lim_{\epsilon\to 0} \chi(|\pi^*F|^2/\epsilon) \pi^*s
  \quad\text{and}\quad
  \lim_{\epsilon\to 0}
  \dbar\chi(|\pi^* F|^2/\epsilon)\wedge \pi^*s
\end{equation*}
are well-defined pseudomeromorphic currents on $\tilde X$ independent
of $\chi$ and $F$; the second current has support on $\tilde Z$. Since
$\PM$ is closed under push-forwards of modifications, \emph{cf.}\ Section
\ref{hemma}, this proves the first part of the lemma.

As proved above, the limit
 \begin{equation*}
      \mu := \lim_{\epsilon\to 0} \chi(|F|^2/\epsilon) s
 \end{equation*}
exists. This current is in fact a so-called almost semi-meromorphic current,
\emph{cf.} \cite[Section~4]{AW3}, and in particular, it has the SEP.
By \cite[Theorem~3.7]{AW3},  $\partial\mu$ also has the SEP.
Thus,
\begin{equation*}
  \lim_{\epsilon\to 0 } \partial\chi(|F|^2/\epsilon) \wedge \mu = 
  \lim_{\epsilon\to 0 }
  \partial(\chi(|F|^2/\epsilon) \wedge \mu)
  -\lim_{\epsilon\to 0 } \chi(|F|^2/\epsilon)\partial\mu
 = \partial\mu-\partial \mu = 0,
\end{equation*}
which proves the last part of the lemma. Here, in the second equality, we
have used that the two limits exist and are both equal to $\partial\mu$
by \eqref{eq:SEP}.
\end{proof}

In particular
\begin{equation} \label{eq:Rdef}
    R^\ell_k := \lim_{\epsilon \to 0} \dbar \chi(|F|^2/\epsilon)
    \wedge \sigma_{k} \dbar\sigma_{k-1} \cdots \dbar\sigma_{\ell + 1}
\end{equation}
is a $\Hom (E_\ell, E_k)$-valued pseudomeromorphic current of bidegree
$(0,k-\ell)$ with support on $Z$; in fact, it follows from the proof
that the support is contained in $Z _{\ell + 1}\cup \cdots\cup Z_k$.  If
$\ell = k-1$, then the right hand side of \eqref{eq:Rdef} should be
interpreted as 
$\lim_{\epsilon \to 0} \dbar \chi(|F|^2/\epsilon) \wedge \sigma_{k}$.
The residue current $R^E = R:= \sum R_k^\ell$ associated with 
$(E, \varphi)$ was introduced in \cite{AW1}, \emph{cf.}\ the introduction.
Assume that $(E, \varphi)$ is a locally free resolution of a coherent
analytic sheaf $\mathcal F$.  Then $R_k^\ell$ vanishes for $\ell>0$ by
\cite[Theorem~3.1]{AW1}. In this case $R = \sum R_k$, where
$R_k = R_k^0$.

\smallskip
Given a complex $(E, \varphi)$ of vector bundles of the form
\eqref{turkos}, following \cite{LW3}, we define the \emph{cycle}
\begin{equation} \label{cycleofcomplex}
    [E] = \sum_{k = 0}^N (-1)^k [\mathcal{H}_k(E)],
  \end{equation}
  where $\mathcal H_k$ is the homology sheaf of $(E,\varphi)$ at level
  $k$.
Note that if $(E,\varphi)$ is a locally free resolution of a coherent
analytic sheaf $\mathcal{F}$, then
$[E] = [\mathcal{F}]$.
In \cite{LW3} we prove the following generalization of \eqref{eq:residueCurrentFundCycle}.

\begin{theorem} \label{thm:LW3}
   Let $(E,\varphi)$ be a complex of Hermitian vector bundles of the
   form
   \eqref{turkos} such that $\mathcal H_k(E)$ has pure codimension
   $p>0$ or vanishes, for $k = 0,\dots,N$, and let $D$ be an arbitrary
   $(1,0)$-connection on
   $(E, \varphi)$.
Then,
    \begin{equation*}
        \frac{1}{(2\pi i)^p p!} \sum_{k = 0}^{N-p}
 (-1)^k \tr (D\varphi_{k + 1} \cdots D\varphi_{k + p} R^{k}_{k + p}) = [E].
    \end{equation*}
\end{theorem}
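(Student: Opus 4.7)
My plan is to reduce Theorem~\ref{thm:LW3} to the known case \eqref{eq:residueCurrentFundCycle}, which corresponds to $N=p$ with $(E,\varphi)$ being a locally free resolution of $\mathcal{H}_0(E)=\mathcal{F}$ (and all other $\mathcal{H}_k(E)$ vanishing). The reduction is done locally, near generic smooth points of the support, using a decomposition of $(E,\varphi)$ into a minimal piece and a pointwise exact piece. First I would observe that the left-hand side is a pseudomeromorphic current of bidegree $(p,p)$ with support in $Z:=\bigcup_k\supp\mathcal{H}_k(E)$: each $R^k_{k+p}$ is pseudomeromorphic of bidegree $(0,p)$ with support in $Z_{k+1}\cup\cdots\cup Z_{k+p}\subseteq Z$, and wedging with smooth $(1,0)$-forms $D\varphi_{k+j}$ and taking trace preserves these properties. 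Using that the $R^k_{k+p}$ are built from almost semi-meromorphic currents (\emph{cf.}\ Lemma~\ref{lma:PMexistence} and \cite[Section~4]{AW3}), both sides satisfy the SEP. Since $Z$ has pure codimension $p$, the SEP together with the dimension principle reduces the identity to an equality of currents in a neighborhood of a generic smooth point $x_0$ of each irreducible component $Z_i$ of $Z$.

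Next I would perform the local splitting of $(E,\varphi)$ near such an $x_0$. A standard Nakayama-type argument on the complex of free $\Ok_{X,x_0}$-modules underlying $(E,\varphi)$ produces a decomposition $(E,\varphi)\cong(E',\varphi')\oplus(E'',\varphi'')$ on a neighborhood of $x_0$, in which every entry of each matrix $\varphi'_k$ lies in the maximal ideal of $\Ok_{X,x_0}$, while each $\varphi''_k$ is an isomorphism of vector bundles on that neighborhood. Since $(E'',\varphi'')$ is pointwise exact, $\mathcal{H}_k(E)=\mathcal{H}_k(E')$ near $x_0$, and by genericity of $x_0$ the minimal complex $(E',\varphi')$ is, after possibly shrinking the neighborhood, a direct sum over those $k$ with $Z_i\subseteq\supp\mathcal{H}_k(E)$ of locally free resolutions $G^{(k)}$ of $\mathcal{H}_k(E)$, each $G^{(k)}$ occupying positions $k,k+1,\dots,k+p$ in $E'$.

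To exploit the splitting I would then replace the Hermitian metric and $(1,0)$-connection on $E$ near $x_0$ by ones for which $E'\oplus E''$ and the further splitting of $E'$ are orthogonal and split; this is legitimate provided the LHS is independent of the choice of Hermitian metric and of the $(1,0)$-connection $D$. Granting this, the minimal inverses $\sigma_k$ and hence each $R^k_{k+p}$ decompose as direct sums compatibly with the splitting. The contribution of the exact piece $(E'',\varphi'')$ vanishes because there $\sigma''_k=(\varphi''_k)^{-1}$ is smooth and no residue appears, and the contribution of $G^{(k)}$ to the $k$-th summand of the LHS is, by \eqref{eq:residueCurrentFundCycle} applied to $G^{(k)}$ with the shift in indices, exactly $[\mathcal{H}_k(E)]$. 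Combining these with the explicit factor $(-1)^k$ in the sum reproduces $[E]=\sum_k(-1)^k[\mathcal{H}_k(E)]$ in a neighborhood of $x_0$, and the SEP reduction from the first paragraph concludes the proof.

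The main obstacle is the metric- and connection-independence of the LHS invoked in the third paragraph. I would establish this by a Chern--Weil-type transgression argument: given two $(1,0)$-connections $D^0, D^1$ on $E$, joined by a straight-line family $D^t$, one differentiates the trace expression in $t$ and uses the Leibniz rule \eqref{bord} together with the identities \eqref{exakt} and \eqref{kalkon} to write the derivative as $\dbar$ of an explicit pseudomeromorphic current built from products of $\sigma_k$, $\dbar\sigma_k$ and the variation $\dot D^t$. The delicate point is that the resulting primitive must be shown to be pseudomeromorphic of bidegree $(p-1,p)$ supported in $Z$, so that its $\dbar$ is forced to vanish on cycles of codimension $p$ by the dimension principle and SEP; handling the boundary terms and the failure of the primitive to be closed in bidegrees $(p+j,p-j)$ for $j>0$ (\emph{cf.}\ Remark~\ref{rem:bidegree}) is where the argument is most technical.
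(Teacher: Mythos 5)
A preliminary remark on the comparison you were asked for: this paper does not actually prove Theorem~\ref{thm:LW3} --- it imports it from \cite{LW3} --- so there is no internal proof to measure your argument against. Your outer frame is nevertheless reasonable and consistent with how the result is established in \cite{LW2,LW3}: both sides are closed pseudomeromorphic currents of bidegree $(p,p)$ supported on $Z=\bigcup_k\supp\mathcal H_k(E)$, which has pure codimension $p$, so their difference vanishes by the dimension principle as soon as the identity is checked near a generic point of each component $Z_i$ (you do not even need the SEP for this reduction), and there one wants to feed the problem into the already-known resolution case \eqref{eq:residueCurrentFundCycle}.

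The pivotal step of your reduction, however, fails. You assert that at a generic point of $Z_i$ the minimal part $(E',\varphi')$ splits as a direct sum of shifted locally free resolutions $G^{(k)}$ of the individual homology sheaves, each occupying positions $k,\dots,k+p$. This is precisely the claim that $(E,\varphi)$ is formal, \textit{i.e.}\ isomorphic in the derived category to $\bigoplus_k\mathcal H_k(E)[k]$, and it is false in general even at generic points of the support. The obstructions are classes in $\mathrm{Ext}^{j}\big(\mathcal H_k(E),\mathcal H_{k+j-1}(E)\big)$ for $j\geq 2$; for generically Cohen--Macaulay sheaves of codimension $p$ these vanish for $j\neq p$, but for $j=p\geq 2$ they need not. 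For example, on $\C^3$ with $Z=\{x=y=0\}$ there are complexes with $\mathcal H_0=\mathcal H_1=\Ok_Z$ realizing a nowhere-vanishing section of $\Ext^2(\Ok_Z,\Ok_Z)\cong\Ok_Z$, and such a complex is indecomposable near every point of $Z$; only for $p=1$ does your generic splitting actually hold. Consequently the passage from $(E,\varphi)$ to resolutions of its homology sheaves cannot be made by an isomorphism of complexes; in \cite{LW3} it is made by a different mechanism, roughly by transporting residue currents along morphisms of complexes (comparison formulas) rather than along a direct sum decomposition. A second, smaller gap: the orthogonalization step needs the left-hand side to be independent of both the Hermitian metrics and the $(1,0)$-connections, but your transgression sketch only varies the connection; changing the metric changes the $\sigma_k$ and hence the currents $R^k_{k+p}$ themselves, and this independence is a separate nontrivial ingredient of \cite{LW2,LW3}.
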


\section{Chern forms and Chern characters}\label{trean}

\subsection{Chern classes and forms}\label{kalla}

Assume that $E$ is a holomorphic vector bundle of rank $r$ equipped with a connection
$D$.  Then recall that the (total) Chern form
$c(E, D) = 1 + c_1(E, D) + \cdots + c_r(E, D)$ is defined by
\[
  \sum_{\ell = 0}^r c_\ell(E, D) t^\ell = \det \left (I + \frac{i}{2\pi}
  \Theta t \right),
  \]
where $\Theta$ is the curvature matrix of $D$ in a local
trivialization; in particular, $c_\ell(E, D)$ is a form of degree
$2\ell$. The de~Rham cohomology class of $c(E, D)$ is the
\emph{(total) Chern class} $c(E) = \sum c_\ell(E)$ of the vector bundle $E$.

\smallskip

If $(E, \varphi)$ is a complex of vector bundles of the form
\eqref{turkos} that is not necessarily a locally free resolution of a
coherent analytic sheaf, in line with the Chern theory of virtual
bundles as in \emph{e.g.} \cite[Section 4]{BB} or \cite[Section II.8.C]{Su}, we let
\begin{equation*}
    c(E) = \prod_{k = 0}^N c(E_k)^{(-1)^k}.
\end{equation*}
Moreover, if $(E, \varphi)$ is equipped with a connection $D = (D_k)$, \emph{cf.}\ Section \ref{smaragd}, we
let
\begin{equation} \label{syster}
    c(E,D) = \prod_{k = 0}^N c(E_k,D_k)^{(-1)^k}
  \end{equation}
  and we let 
$c_\ell(E, D) = c(E,D)_\ell$ be the component of degree
$2\ell$.

Consider now a coherent analytic sheaf $\mathcal{F}$ with a
locally free resolution \eqref{eq:Ecomplex}. We define the Chern class
of $\mathcal F$ by \eqref{eq:cClassSheafDef}, \textit{i.e.} $c(\mathcal{F}) = c(E)$,
and if $(E,\varphi)$ is equipped with a connection $D$, then this class may be
represented by \eqref{syster}.
This definition of Chern classes of coherent sheaves may be motivated in terms of K-theory.
However, it 
is typically considered only on manifolds
with the so-called resolution property.
Recall that a complex manifold $X$ is said to have the \emph{resolution property} if
any coherent analytic sheaf $\mathcal{F}$ on $X$ has a finite locally free resolution
\eqref{eq:Ecomplex}.
For such manifolds, the definition \eqref{eq:cClassSheafDef} is the unique extension
of the definition of Chern classes from locally free sheaves to
coherent analytic sheaves that satisfies the following \emph{Whitney formula}: if $0\to \mathcal{F}' \to \mathcal{F} \to \mathcal{F}'' \to 0$ is a short exact sequence of sheaves,
then $c(\mathcal{F}) = c(\mathcal{F}'')c(\mathcal{F}'')$,
\emph{cf.} \cite[Th\'eor\`eme 2]{BS} or \cite[Chapter~14.2]{EH}.

In this paper, we define Chern classes of coherent sheaves by \eqref{eq:cClassSheafDef} also on manifolds which do not have the resolution property, but then necessarily only for coherent sheaves
with a locally free resolution \eqref{eq:Ecomplex}.
Note that if we are on a manifold for which the resolution property does not hold, it is not
immediate that the de Rham cohomology class of \eqref{eq:cClassSheafDef} is well-defined,
\textit{i.e.} independent of the resolution. However, that it is well-defined follows from a construction of Chern classes
of arbitrary coherent analytic sheaves on arbitrary complex manifolds by Green, \cite{Green}, see also \cite{TTGreen},
since in case one has a global locally free resolution of finite length, the definition in \cite{Green} coincides with
the one in \eqref{eq:cClassSheafDef}.

\subsection{The Chern character (form) of a vector bundle}\label{hemmavid}

Assume that $E$ is a holomorphic vector
bundle of rank $r$.  Then formally we can write
\begin{equation*}
  1 + c_1(E)t + \cdots + c_r(E) t^r = \prod_{i = 1}^r (1 + \alpha_i t),
  \end{equation*}
where $\alpha_i$ are the so-called \emph{Chern roots} of $E$, see \emph{e.g.}
\cite[Remark~3.2.3]{Fulton}.
In particular, this means that $c_\ell(E) = e_\ell(\alpha_1,\ldots,
\alpha_r)$, where $e_\ell$ is the $\ell^\mathrm{th}$ \emph{elementary symmetric
  polynomial}
\begin{equation*}
e_\ell(x) = e_\ell(x_1,\ldots, x_r) = \sum_{1 \leq i_1 < \dots < i_\ell \leq r} x_{i_1} \cdots x_{i_\ell}.
\end{equation*}
The \emph{Chern character} of $E$ may formally be defined as the symmetric polynomial $\ch(E) = 
\sum_{i = 1}^r e^{\alpha_i}$ in the
Chern roots, see \emph{e.g.} \cite[Example ~3.2.3]{Fulton}. In particular, the $\ell^\mathrm{th}$
graded piece is
\begin{equation}\label{tetrapak}
  \ch_\ell(E) = \frac{1}{\ell!} p_\ell (\alpha_1,\ldots, \alpha_r),
\end{equation}
where $p_\ell$ is the $\ell^\mathrm{th}$ \emph{power sum polynomial}
\begin{equation*}
p_\ell(x) = p_\ell(x_1,\ldots, x_r) = \sum_{i = 1}^r x_i^\ell.
\end{equation*}
Since any symmetric polynomial in $x_i$ may be expressed as a unique
polynomial in $e_j(x)$, there are polynomials 
$Q_\ell(t_1, \ldots, t_\ell)$, $\ell\geq 1$, such that
$p_\ell(x) = Q_\ell(e_1(x), \ldots, e_\ell(x))$; these are sometimes
called \emph{Hirzebruch--Newton polynomials}.  If $t_j$ is given weight
$j$, then $Q_\ell(t_1, \ldots, t_\ell)$ is a weighted homogenous
polynomial of degree $\ell$.  Written out explicitly, Definition
\eqref{tetrapak} should be read as
\begin{equation*}
  \ch_\ell(E) = \frac{1}{\ell!} Q_\ell\big(c_1(E), \ldots, c_\ell(E)\big).
\end{equation*}

If $E$ is equipped with a connection $D$, one can analogously
define \emph{Chern character forms}
\begin{equation}\label{havre}
  \ch_\ell(E, D)
 = \frac{1}{\ell!} Q_\ell\big(c_1(E, D), \ldots, c_\ell(E, D)\big)
\end{equation}
and $\ch(E, D) = \sum \ch_\ell(E, D)$
representing the
Chern character.
If $\Theta$ is the curvature corresponding to $D$ (in a local
trivialization), then
\begin{equation} \label{lovande}
  \ch_\ell(E,D)
 = \frac{1}{\ell!}\tr \left(\frac{i}{2\pi}\Theta\right)^\ell,
\end{equation}
\emph{cf.} \emph{e.g.} \cite[\S B.4-6]{Tu}.

The polynomials $Q_\ell$ may be computed recursively through Newton's
identities, 
\begin{equation} \label{newtonid}
  p_\ell(x) = (-1)^{\ell-1} \ell e_\ell(x) + \sum_{i = 1}^{\ell-1}
  (-1)^{\ell-i-1}e_{\ell-i}(x) p_i(x), \quad \ell\geq 1.
\end{equation}
In particular, it follows that the $Q_\ell$ are independent of $r$.
Moreover, $\ch_\ell(E, D)$ is of the form
\begin{equation} \label{sommar}
  \ch_\ell(E, D) = \frac{(-1)^{\ell-1}}{(\ell-1)!} c_\ell(E, D) + 
  \widetilde{Q}_\ell\big(c_1(E, D),\ldots,c_{\ell-1}(E, D)\big),
\end{equation}
where $\widetilde{Q}_\ell$ is a weighted homogeneous polynomial of degree
$\ell$, and conversely,
\begin{equation} \label{sommarbad}
  c_\ell(E, D) = (-1)^{\ell-1} (\ell-1)! \ch_\ell(E, D) + \widehat{Q}_\ell\big
  (\ch_1(E, D),\ldots,\ch_{\ell-1}(E, D)\big),
\end{equation}
where $\widehat{Q}_\ell$ is a weighted homogeneous polynomial of degree
$\ell$.

\begin{example}
  We obtain from \eqref{newtonid} that 
$p_1 = e_1 \text{ and } p_2 = e_1^2-2e_2$.  Thus, $Q_1(t_1) = t_1$ and
$Q_2(t_1,t_2) = t_1^2-2t_2$, so
\begin{equation} \label{eq:ch12}
  \ch_1(E,D) = c_1(E,D)
  \quad \text{ and } \quad
  \ch_2(E,D) = \frac{1}{2}(c_1(E,D)^2-2c_2(E,D)).
\end{equation}
\end{example}

We have the following (formal) relationship between $e_\ell(x)$ and
$p_\ell(x)$, and thus $Q_\ell(e_1,\ldots, e_\ell)$:
\begin{equation}\label{kaffekopp}
        \ln \left (\sum_{\ell\geq 0} e_\ell(x) t^\ell \right) = 
        \sum_{\ell\geq 1}  \frac{(-1)^{\ell-1}}{\ell} p_\ell(x)t^\ell
        = 
        \sum_{\ell\geq 1}  \frac{(-1)^{\ell-1}}{\ell} Q_\ell(e_1,
        \ldots, e_\ell)t^\ell.
      \end{equation}
This follows \emph{e.g.} by integrating
\cite[Chapter~I, Equation~(2.10')]{MacDonald}
with respect to $t$.
Since $e_1, \ldots, e_r$ are algebraically independent, \eqref{kaffekopp} holds if we replace the $e_\ell$ by $a_\ell$ in any
commutative ring. In particular, if we apply \eqref{kaffekopp} to $e_\ell = c_\ell(E,
D)$ and take the components of degree $2\ell$ (the coefficents of $t^\ell$) we
get
\begin{equation}\label{veckoslut}
        \ln \big(c(E, D) \big)_\ell = 
     (-1)^{\ell-1}(\ell-1)! \ch_\ell(E, D);
\end{equation}
here $()_\ell$ denotes the part of form degree $2\ell$.

\subsection{The Chern character of a complex of vector
  bundles}\label{glassbil}

Let $(E, \varphi)$ be a complex of vector bundles of the form
\eqref{turkos}.
Then the Chern character can be defined as
\begin{equation*}
  \ch(E) = \sum_{k = 0}^N (-1)^k \ch(E_k),
\end{equation*}
\emph{cf.}, \emph{e.g.}, \cite[Chapter 14.2.1]{EH} and
\cite[Chapter~V.3]{Karoubi}.

If $(E, \varphi)$ is equipped with a
connection $D = (D_k)$, for $\ell\geq 1$ we define a \emph{Chern
  character form}
$\ch_\ell(E, D)$ through
\eqref{havre}.
Then $\ch_\ell(E, D)$ inherits properties from the vector bundle case.
In particular \eqref{sommar} and \eqref{sommarbad} hold.
Also
\eqref{veckoslut} holds and, using that
\begin{equation*}
  \ln \big(c(E, D)\big) = \ln \left (\prod_{k = 0}^N c(E_k, D_k)^{(-1)^k}
  \right) = \sum_{k = 0}^N (-1)^k \ln \big(c(E_k, D_k)\big),
\end{equation*}
we get that
  \begin{equation}\label{vitsippa}
       \ch_\ell(E,D) = \sum_{k = 0}^N (-1)^k \ch_\ell(E_k,D_k).
    \end{equation}
    In particular, $\ch_\ell(E, D)$ represents $\ch_\ell(E)$.

    \smallskip

    Let $\Theta_k$ denote the curvature matrix of $D_k$ (in some local
    trivialization)
and define\footnote{To be consistent with \eqref{tetrapak} we should
  have a factor $(i/2\pi)^\ell$ in the definition of
  $p_\ell(E,D)$, \emph{cf.}\  ~\eqref{eq:chernCharacterAlternative}. However,
  the normalization \eqref{eq:pldef} is more convenient to work
with.}
\begin{equation}\label{eq:pldef}
    p_\ell(E,D) = \sum_{k = 0}^N (-1)^k \tr \Theta_k^\ell.
  \end{equation}
In view of \eqref{lovande} and \eqref{vitsippa}, for $\ell\geq 1$,
\begin{equation} \label{eq:chernCharacterAlternative}
  \ch_\ell(E,D) =
 \frac{i^\ell}{(2\pi)^\ell \ell!}
  p_\ell(E,D).
\end{equation}

Assume that $D = (D_k)$ is a $(1,0)$-connection. Let $()_{(q,r)}$ denote the part of bidegree $(q,r)$
of a form. Since the curvature matrices $\Theta_k$ (in local trivializations) consist of forms of
bidegree $(2,0)$ and $(1,1)$, it follows that
\begin{equation} \label{eq:pl11}
    p_{(\ell,\ell)}(E,D) = \sum_{k = 0}^N (-1)^k \tr (\Theta_k)_{(1,1)}^\ell
\end{equation}
and by \eqref{eq:chernCharacterAlternative} that
\begin{equation} \label{eq:chernCharacterAlternativell}
 \ch_{(\ell,\ell)}(E,D) = 
  \frac{i^\ell}{(2\pi)^\ell \ell!}
  p_{(\ell,\ell)}(E,D).
\end{equation}

\section{Connections compatible with a complex}\label{taget}

Assume that $(E, \varphi)$ is a complex of vector bundles of the form
\begin{equation} \label{eq:augmentedEcomplex}
     0 \to E_N \xrightarrow[]{\varphi_N}  \cdots\xrightarrow[]{\varphi_1}  E_0\xrightarrow[]{\varphi_0} E_{-1} \to 0.
\end{equation}
Moreover assume that each $E_k$ is equipped with a connection
$D_k$. Then, following \cite{BB}, we say that the connection $D = (D_{-1}, \ldots, D_N)$
on $(E,\varphi)$
is \emph{compatible} with $(E, \varphi)$ if
\begin{equation} \label{eq:compatible}
    D_{k-1} \circ \varphi_k = -\varphi_k \circ D_k
  \end{equation}
  for $k = 0, \ldots, N$.
In terms of the induced connection $D = D_{\End}$ on $\End E$, this can
succinctly be written as $D\varphi_k = 0$.

Note that in contrast to above, \eqref{eq:augmentedEcomplex} starts at level $-1$.
The typical situation we consider is when we start with a complex
\eqref{turkos} that is pointwise exact outside an analytic variety $Z$ and
then restrict to $X \setminus Z$; then
$E_{-1} = 0$.

\begin{remark}\label{teater}
By \cite[Lemma 4.17]{BB}, given a complex $(E, \varphi)$ of vector
bundles of the form \eqref{eq:augmentedEcomplex}
one can always extend a given connection $D_{-1}$ on $E_{-1}$
to a connection $D = (D_k)$ that is compatible with
$(E, \varphi)$ where it is pointwise exact.
In fact, Lemma~\ref{lma:compatible} below gives an explicit formula for such
a connection, see Remark~\ref{national}.
\end{remark}

\begin{remark}
In \cite{BB}, the condition of being compatible is stated without the
minus sign in \eqref{eq:compatible}; our condition on $D$ is actually the same,
but we need to introduce the minus sign since we use the conventions of the superstructure.
Indeed, if $\xi$ is a section of $E_k$ of form-degree $0$, then
$D_{k-1} \circ \varphi_k \xi$ is defined in the same way with or without the
superstructure, while the action of $\varphi_k$ on $D_k \xi$ changes sign depending on whether
the superstructure is used or not since $D_k\xi$ has form-degree $1$,
\emph{cf.}\ \eqref{eq:superaction}.
\end{remark}

Compatible connections satisfy the following Whitney formula,
\cite[Lemma~4.22]{BB},  \emph{cf.}\ Section \ref{kalla}.
\begin{lemma} \label{lma:BBvanishing}
    Assume that $(E, \varphi)$ is an exact complex of vector
    bundles of the form \eqref{eq:augmentedEcomplex} that is
    equipped with a connection $D = (D_k)$ that is compatible with $(E, \varphi)$.
      Then
    \begin{equation*}
        c(E_{-1},D_{-1}) = \prod_{k = 0}^N c(E_k,D_k)^{(-1)^k}.
    \end{equation*}
\end{lemma}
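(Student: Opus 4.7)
The plan is to reduce the multiplicative identity for Chern forms to an additive identity for Chern characters via the logarithm formula \eqref{veckoslut}, and then establish the additive identity by induction on the length $N$ of the complex, with the short exact sequence case handled by a smooth splitting argument that converts compatibility into an upper triangular structure on the connection.

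Taking graded pieces of both sides of $\ln c(E_{-1},D_{-1}) = \sum_{k=0}^N(-1)^k \ln c(E_k,D_k)$ and invoking \eqref{veckoslut} and \eqref{lovande}, the claim is equivalent to the curvature trace identity
\[
\tr\Theta_{-1}^\ell = \sum_{k=0}^N (-1)^k \tr\Theta_k^\ell, \qquad \ell\geq 1.
\]
We proceed by induction on $N$. For $N=0$, $\varphi_0$ is an isomorphism, and \eqref{eq:compatible} (read on sections of form degree zero, where the super sign is absent as noted in the remark) gives $D_{-1} = \varphi_0 D_0 \varphi_0^{-1}$, so $\Theta_{-1}$ and $\Theta_0$ are conjugate and the identity is immediate.

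For the inductive step, the key observation is that compatibility forces $D_{k-1}$ to preserve the smooth subbundle $K_{k-1}:=\ker\varphi_{k-1}\subseteq E_{k-1}$: for a section $\xi$ of $K_{k-1}$, applying $D_{k-2}$ to $\varphi_{k-1}\xi = 0$ and using \eqref{eq:compatible} on form degree zero yields $\varphi_{k-1}(D_{k-1}\xi) = 0$. Set $K:=\ker\varphi_0 = \im\varphi_1$ and let $D_K$ denote the restriction of $D_0$, which is thus a well-defined connection on $K$. First handle the short exact sequence $0\to K\to E_0\xrightarrow{\varphi_0} E_{-1}\to 0$: choose a smooth splitting $E_0\cong K\oplus E_{-1}$ (afforded by the Hermitian metric on $E_0$) and write $D_0$ in block form. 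Preservation of $K$ kills the lower-left block, while compatibility of $\varphi_0$ with $D_{-1}$ on form degree zero sections identifies the lower-right diagonal block with $D_{-1}$. Hence $D_0$ is block upper triangular with diagonal $(D_K,D_{-1})$, so $\Theta_0$ is block upper triangular with diagonal $(\Theta_K,\Theta_{-1})$, giving $c(E_0,D_0) = c(K,D_K)\,c(E_{-1},D_{-1})$.

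Finally, applying the inductive hypothesis to the shorter exact complex $0\to E_N\to\cdots\to E_1\to K\to 0$ with compatible connections $(D_1,\ldots,D_N,D_K)$ yields $c(K,D_K) = \prod_{k=1}^N c(E_k,D_k)^{(-1)^{k-1}}$, and combining this with the short-exact-sequence identity gives the desired formula. The main technical obstacle is the sign bookkeeping from the superstructure convention when translating compatibility into block matrix form; this is cleanly resolved by restricting to sections of form degree zero, where \eqref{eq:compatible} agrees with the classical Baum--Bott condition $D_{k-1}\varphi_k = \varphi_k D_k$.
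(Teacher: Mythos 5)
Your proof is correct, but note that the paper does not actually prove this lemma: it is quoted directly from \cite[Lemma~4.22]{BB}, so there is no in-paper argument to compare against. What you have written is in essence a self-contained reconstruction of the Baum--Bott argument: the key point is that compatibility forces the connection to become block upper triangular with respect to a smooth splitting of the pointwise exact sequence (compatibility makes $D_0$ preserve $K=\ker\varphi_0$ and identifies the induced quotient connection with $D_{-1}$ via $\varphi_0$), so the curvature is block triangular, the Chern form factors as $c(E_0,D_0)=c(K,D_K)\,c(E_{-1},D_{-1})$, and induction on the length finishes the proof. Two small remarks. First, the opening reduction via \eqref{veckoslut} to the additive identity $\tr\Theta_{-1}^\ell=\sum_k(-1)^k\tr\Theta_k^\ell$ is superfluous: your induction is in the end carried out on the multiplicative statement itself, and the base case is handled just as directly by noting that conjugate curvatures have equal Chern forms. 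Second, to close the induction you should record explicitly that $(D_K,D_1,\dots,D_N)$ is compatible with the truncated complex $0\to E_N\to\cdots\to E_1\to K\to 0$, \emph{i.e.}\ that $D_K(\varphi_1\xi)=\varphi_1(D_1\xi)$; this holds because $D_0(\varphi_1\xi)=\varphi_1(D_1\xi)$ and $D_0$ agrees with $D_K$ on sections of $K=\im\varphi_1$. Your treatment of the superstructure sign is also right: on form-degree-zero sections \eqref{eq:compatible} is the classical intertwining relation, and a connection is determined by its action there.
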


\subsection{The connection $\widehat D^\epsilon$}
We will consider a specific situation and choice of compatible
connection.
As in previous sections, let $(E, \varphi)$ be a complex of
vector bundles of the form \eqref{turkos} that is pointwise exact outside the
analytic set $Z$.
Moreover, let $\chi$ be a smooth approximand of $\chi_{[1,\infty)}$, let $F$ be a (generically nonvanishing) section of a vector bundle such
that $Z \subseteq \{F = 0\}$, and let
$\chi_\epsilon = \chi(|F|^2/\epsilon)$. Then $\chi_\epsilon\equiv 0$ in
a neighborhood of $Z$.
Consider now a fixed choice of connection $D = (D_k)$ on $(E, \varphi)$,
and for $\epsilon > 0$, define a new connection
$\widehat D^\epsilon = (\widehat D^\epsilon_k)$ on $(E,\varphi)$
through
\begin{equation}\label{eq:compatibleConnection}
    \widehat D_k^\epsilon = -\chi_\epsilon \sigma_k D\varphi_k + D_k.
\end{equation}
Note that if $D$ is a $(1,0)$-connection, then so is $\widehat
D^\epsilon$.

\begin{lemma} \label{lma:compatible}
  The connection $\widehat{D}^\epsilon$ is compatible with
  $(E,\varphi)$ where $\chi_\epsilon \equiv 1$. 
\end{lemma}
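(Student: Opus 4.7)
The plan is to verify the identity $\widehat D^\epsilon \varphi_k = 0$ (equivalently, $\widehat D^\epsilon_{k-1}\circ\varphi_k = -\varphi_k\circ \widehat D^\epsilon_k$) directly, by substituting the definition \eqref{eq:compatibleConnection} on the open set where $\chi_\epsilon \equiv 1$ and reducing the resulting expression to zero via the two algebraic identities \eqref{exakt} and \eqref{identitet} governing the complex and its minimal inverses. First I would observe that, because $\chi_\epsilon \equiv 1$ forces $F \neq 0$ and hence places us outside $Z$, all the $\sigma_k$ appearing in \eqref{eq:compatibleConnection} are well-defined smooth morphisms at the points under consideration, and the identity $\varphi_k\sigma_k + \sigma_{k-1}\varphi_{k-1} = \Id_{E_{k-1}}$ is available.

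Unwinding the superstructure sign (as $\varphi_k$ has odd total degree), compatibility amounts to showing that
\begin{equation*}
\widehat D^\epsilon_{k-1}\circ \varphi_k + \varphi_k \circ \widehat D^\epsilon_k = 0
\end{equation*}
on $\{\chi_\epsilon \equiv 1\}$. Inserting \eqref{eq:compatibleConnection} gives
\begin{equation*}
-\sigma_{k-1} D\varphi_{k-1}\,\varphi_k + D_{k-1}\varphi_k - \varphi_k\sigma_k D\varphi_k + \varphi_k D_k.
\end{equation*}
The two plain connection terms combine, using the definition \eqref{DEnd} of $D_{\End}$ applied to the odd morphism $\varphi_k$, into $D\varphi_k$. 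Hence the whole expression becomes
\begin{equation*}
-\sigma_{k-1} D\varphi_{k-1}\,\varphi_k - \varphi_k\sigma_k D\varphi_k + D\varphi_k.
\end{equation*}

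Next I would use \eqref{exakt} to replace $D\varphi_{k-1}\,\varphi_k$ by $\varphi_{k-1}D\varphi_k$, so that $D\varphi_k$ can be factored out on the right:
\begin{equation*}
\bigl(\Id_{E_{k-1}} - \sigma_{k-1}\varphi_{k-1} - \varphi_k\sigma_k\bigr)\,D\varphi_k.
\end{equation*}
The parenthesized factor is exactly the left hand side of \eqref{identitet} subtracted from $\Id_{E_{k-1}}$, and hence vanishes identically outside $Z$. This kills the whole expression, establishing compatibility. The only subtle point, and the one to be careful with, is keeping track of the superstructure sign conventions when writing $D\varphi_k = D_{k-1}\varphi_k + \varphi_k D_k$ and when applying the compatibility criterion; everything else is algebraic manipulation using \eqref{exakt} and \eqref{identitet}.
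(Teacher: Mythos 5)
Your argument is correct and is essentially the paper's own proof: the paper likewise writes $\widehat D^\epsilon\varphi_k = \widehat D^\epsilon_{k-1}\circ\varphi_k + \varphi_k\circ\widehat D^\epsilon_k$, substitutes \eqref{eq:compatibleConnection}, uses \eqref{exakt} to rewrite $\sigma_{k-1}D\varphi_{k-1}\,\varphi_k$ as $\sigma_{k-1}\varphi_{k-1}D\varphi_k$, and invokes \eqref{identitet} to conclude $\widehat D^\epsilon\varphi_k = (1-\chi_\epsilon)D\varphi_k$, which vanishes where $\chi_\epsilon\equiv 1$. The only cosmetic difference is that you set $\chi_\epsilon\equiv 1$ at the outset rather than carrying the factor $(1-\chi_\epsilon)$ through the computation.
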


\begin{proof}
    Using \eqref{DEnd}, \eqref{exakt} and \eqref{identitet}
    we obtain that
    \begin{align*}
        \widehat D^\epsilon \varphi_k &= \widehat D_{k-1}^\epsilon
        \circ \varphi_k + \varphi_k \circ \widehat D_k^\epsilon \\
        &=\chi_\epsilon \left(-\sigma_{k-1} D\varphi_{k-1} \varphi_k - \varphi_k \sigma_k D\varphi_k \right) + D_{k-1} \circ \varphi_k + \varphi_k \circ D_k \\
       &= -\chi_\epsilon (\sigma_{k-1} \varphi_{k-1} + \varphi_k \sigma_k) D\varphi_k + D_{k-1} \circ \varphi_k + \varphi_k \circ D_k\\
       &= (1-\chi_\epsilon) D \varphi_k.
      \end{align*}
      In particular, $\widehat D^\epsilon$ is compatible
    with the complex where $\chi_\epsilon \equiv 1$.
\end{proof}

\begin{remark}\label{national}
Assume that $(E, \varphi)$ is a pointwise exact complex of vector
bundles equipped with some connection $D = (D_k)$. Then, as in the proof
above, it follows that the connection $\widetilde D$ defined by 
\[
  \widetilde D_k = -\sigma_k D\varphi_k + D_k 
\]
is compatible with $(E, \varphi)$. Moreover, $\widetilde D_{-1} = D_{-1}$,
\emph{cf.}\ Remark \ref{teater}.
\end{remark}

Assume that $\theta_k$ is a connection matrix for $D_k$ in a local
trivialization, \textit{i.e.}
$D_k\alpha = d\alpha + \theta_k \wedge \alpha$.
Then the connection matrix for $\widehat{D}_k^\epsilon$ is
\[
  \hat{\theta}^\epsilon_k = -\chi_\epsilon \sigma_k D\varphi_k + 
  \theta_k
\]
and thus the curvature matrix of $\widehat D_k^\epsilon$ equals
\begin{equation}\label{trubbel}
    \widehat{\Theta}_k^\epsilon = d\hat{\theta}^\epsilon_k + 
    (\hat{\theta}_k^\epsilon)^2
    =
- d(\chi_\epsilon \sigma_k D\varphi_k) + \chi_\epsilon^2 \sigma_k D\varphi_k \sigma_kD\varphi_k -\chi_\epsilon(\theta_k \sigma_k
    D\varphi_k + \sigma_k D\varphi_k \theta_k) + \Theta_k,
 \end{equation}
where $\Theta_k$ is the curvature matrix of $D_k$.

 \section{The Chern current $c^{\Res}(E, D)$}
\label{gula}

In this section we prove that the limits as $\epsilon \to 0$ of
products of Chern forms $c_\ell(E, \widehat
D^\epsilon)$, where $\widehat D^\epsilon$ is the connection from the
previous section, give the desired currents in \eqref{eq:chernCurrent}.
More generally, we prove the following
generalization of Theorem~\ref{thm:existence}.

\begin{theorem} \label{thm:existence2}
Assume that $(E,\varphi)$ is a complex of Hermitian vector bundles of the form
\eqref{turkos} that is pointwise exact
outside a subvariety $Z$ of positive codimension. Moreover assume that
$D = (D_k)$ is a connection on $(E,\varphi)$ and let $\widehat
D^\epsilon$ be the connection defined by \eqref{eq:compatibleConnection}.
    Then, for $\ell_1, \ldots, \ell_m\in \N_{>0}$, 
   \begin{equation} \label{eq:existence3}
        c_{\ell_1}^{\Res}(E,D) \wedge \dots \wedge c_{\ell_m}^{\Res}(E,D) = 
        \lim_{\epsilon \to 0} c_{\ell_1}(E,\widehat{D}^\epsilon) \wedge
        \dots \wedge c_{\ell_m}(E,\widehat{D}^\epsilon),
    \end{equation}
where the right side is defined by \eqref{syster}, is a well-defined
closed
pseudomeromorphic current that is independent of
the choice of $\chi_\epsilon$, has support on $Z$, and
represents $c_{\ell_1}(E)\wedge\cdots\wedge c_{\ell_m}(E)$.
\end{theorem}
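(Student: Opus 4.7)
For each fixed $\epsilon>0$ the form $c_{\ell_1}(E,\widehat D^\epsilon) \wedge \cdots \wedge c_{\ell_m}(E,\widehat D^\epsilon)$ is a smooth closed form that represents $c_{\ell_1}(E) \cdots c_{\ell_m}(E)$ in de Rham cohomology, so the task splits into (i) producing a limit as $\epsilon \to 0$ that is pseudomeromorphic and independent of $\chi_\epsilon$, (ii) placing the support of this limit in $Z$, and (iii) transferring closedness and the cohomology class to the limit. The strategy is to expand the product explicitly via formula \eqref{trubbel} for $\widehat\Theta_k^\epsilon$, reduce every resulting term to Lemma~\ref{lma:PMexistence}, and combine this with the Baum--Bott-style Whitney vanishing of Lemma~\ref{lma:BBvanishing} for the support.

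\textbf{Expansion and termwise convergence.} By \eqref{trubbel}, $\widehat\Theta_k^\epsilon$ equals $\Theta_k$ plus a finite sum of terms each carrying at least one factor of $\chi_\epsilon$ or $d\chi_\epsilon$ together with entries of $\sigma_k$, $\partial\sigma_k$ or $\bar\partial\sigma_k$, modulated by smooth form-valued coefficients built from $\varphi_k$, $D\varphi_k$, $\theta_k$, $\Theta_k$. Since $c_\ell(E_k,\widehat D_k^\epsilon)$ is polynomial in the entries of $\widehat\Theta_k^\epsilon$ and each $c(E_k,\widehat D_k^\epsilon)^{-1}$ is polynomial in them as a formal power series, the wedge product in \eqref{eq:existence3} expands as the smooth piece $c_{\ell_1}(E,D) \cdots c_{\ell_m}(E,D)$ plus a finite sum of terms of the form $A\,\chi_\epsilon^a\,s$ with $a\geq 1$ or $A\,\chi_\epsilon^a\,d\chi_\epsilon\wedge s$ with $a\geq 0$, where $A$ is a smooth form and $s$ is a product of entries of the various $\sigma_k$, $\partial\sigma_k$, $\bar\partial\sigma_k$ in some local trivialization. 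Only one factor of $d\chi_\epsilon$ can survive in any given term since $d\chi_\epsilon$ is a $1$-form, so cross-factor products of $d\chi_\epsilon$ arising from distinct Chern forms in the wedge cancel. Using \eqref{information} to write $\chi_\epsilon^a\,d\chi_\epsilon = \tfrac{1}{a+1}\,d\hat\chi_\epsilon$ with $\hat\chi = \chi^{a+1}$, and noting that any positive integer power of $\chi$ is again a smooth approximand of $\chi_{[1,\infty)}$, Lemma~\ref{lma:PMexistence} provides a pseudomeromorphic limit as $\epsilon\to 0$ for every such term, independent of the choice of $\chi$ and $F$. Summing over the finite expansion yields the well-defined pseudomeromorphic current in \eqref{eq:existence3} and its independence of $\chi_\epsilon$.

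\textbf{Support, closedness, cohomology class, and main obstacle.} By Lemma~\ref{lma:compatible}, $\widehat D^\epsilon$ is compatible with $(E,\varphi)$ on $\{\chi_\epsilon\equiv 1\}$, and since $\chi_\epsilon\equiv 1$ forces $|F|^2\gg\epsilon$, this set lies inside $\{F\neq 0\}\subseteq X\setminus Z$ where the complex is pointwise exact. Augmenting $(E,\varphi)$ with $E_{-1} = 0$ and applying Lemma~\ref{lma:BBvanishing} gives $c(E,\widehat D^\epsilon) = c(E_{-1},\widehat D_{-1}^\epsilon) = 1$ on $\{\chi_\epsilon\equiv 1\}$, so $c_\ell(E,\widehat D^\epsilon) = 0$ there for $\ell>0$, hence the whole product is supported in $\{\chi_\epsilon<1\}$, a shrinking neighborhood of $\{F = 0\}$; choosing $F$ locally with $\{F = 0\} = Z$ and invoking the already-proved independence of $F$ confines the support of the limit to $Z$. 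Closedness of the limit follows from the continuity of $d$ on currents applied to the closed smooth representatives, and since each $c_{\ell_1}(E,\widehat D^\epsilon)\wedge\cdots\wedge c_{\ell_m}(E,\widehat D^\epsilon)$ represents $c_{\ell_1}(E)\cdots c_{\ell_m}(E)$, pairing with an arbitrary closed compactly supported test form and passing to the limit transfers the class to the limit current. I expect the main obstacle to be the bookkeeping of the expansion: organizing the finite sum so that every term manifestly fits the hypotheses of Lemma~\ref{lma:PMexistence}, and tracking the vanishing of cross-factor $d\chi_\epsilon$ products when $m\geq 2$. Once this is done, all remaining assertions follow from the cited lemmas together with standard properties of pseudomeromorphic currents.
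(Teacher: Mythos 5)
Your proposal is correct and follows essentially the same route as the paper's proof: expand the product via \eqref{trubbel} into a smooth term plus terms of the form $\chi_\epsilon^a B'$ and $\chi_\epsilon^a d\chi_\epsilon\wedge B''$, absorb the powers of $\chi_\epsilon$ into a new approximand $\hat\chi=\chi^{a+1}$ via \eqref{information} so that Lemma~\ref{lma:PMexistence} gives existence, pseudomeromorphicity and independence of the regularization, then use Lemmas~\ref{lma:compatible} and~\ref{lma:BBvanishing} to confine the support to $\{\chi_\epsilon\not\equiv 1\}$ and hence to $Z$, and pass closedness and the cohomology class to the limit by weak convergence. The only cosmetic difference is that you make explicit the vanishing of cross-factor $d\chi_\epsilon$ products, which the paper handles implicitly in its expansion.
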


Theorem~\ref{thm:existence} is an immediate consequence of Theorem~\ref{thm:existence2}.

\begin{proof}
Let
    \begin{equation} \label{eq:Mepsilon}
        M_\epsilon = c_{\ell_1}(E,\widehat{D}^\epsilon) \wedge
        \dots \wedge c_{\ell_m}(E,\widehat{D}^\epsilon).
      \end{equation}
      We first prove that $\lim_{\epsilon\to 0}M_\epsilon$ exists and
      is a pseudomeromorphic current.
This is a local statement and we may therefore work in a local trivialization
where $D_k$ is determined by the connection matrix $\theta_k$.
By \eqref{trubbel},
 $\widehat\Theta_k^\epsilon$ is a (form-valued) matrix of the form
     \begin{equation*}
       \widehat{\Theta}_k^\epsilon = 
       \alpha_k + 
       \chi_\epsilon \beta_k' + \chi_\epsilon^2 \beta_k'' + 
       d\chi_\epsilon \wedge \beta_k''',
      \end{equation*}
  where $\alpha_k = \Theta_k$ is smooth and $\beta_k'$, $\beta_k''$ and $\beta_k'''$ are polynomials in
    $\sigma_k$, $D\varphi_k$, $\theta_k$ and exterior derivatives of
    such factors. In particular $\alpha_k$, $\beta_k'$, $\beta_k''$,
    and $\beta_k'''$ are
    independent of $\epsilon$.

Since $M_\epsilon$ is a polynomial in the entries of
    $\widehat{\Theta}_0^\epsilon,\dots,\widehat{\Theta}_N^\epsilon$,
    see Section \ref{kalla},
   we can write
    \begin{equation*} 
        M_\epsilon = A + \sum_{j \geq 1} \chi_\epsilon^j B'_j + \sum_{j \geq 1} \chi_\epsilon^{j-1} d\chi_\epsilon\wedge B''_j,
    \end{equation*}
    where $A$, $B'_j$, and $B''_j$ are independent of
    $\epsilon$, $A$ is smooth, and $B'_j$ and $B''_j$ are
    polynomials in entries of  $\sigma_k$, $D\varphi_k$, $\theta_k$
    and exterior derivatives of such factors.
 Let
    $\hat\chi_\epsilon = \hat\chi(|F|^2/\epsilon)$, where $\hat
    \chi = \chi^j\sim \chi_{[1,\infty)}$, \emph{cf.}\ Section
    ~\ref{stromprelim}.
    Then by Lemma
    ~\ref{lma:PMexistence}, the limits of
    \begin{equation*}
      \chi^j_\epsilon B_j' = \hat \chi_\epsilon B_j'
      \qquad\text{and}\qquad
      \chi^{j-1}_\epsilon d\chi_\epsilon \wedge B_j'' = d\hat
      \chi_\epsilon \wedge B_j''/j
    \end{equation*}
as $\epsilon \to 0$ exist and are pseudomeromorphic
currents that are independent of $\chi_\epsilon$. It follows that the limit \eqref{eq:existence3} exists and
is a pseudomeromorphic current that is independent of
$\chi_\epsilon$.

By Lemma~\ref{lma:compatible}, $\widehat D^\epsilon$ is compatible with
$(E, \varphi)$ where $\chi_\epsilon\equiv 1$ and therefore, by Lemma~\ref{lma:BBvanishing}, $c(E, \widehat D^\epsilon) = 0$ there. 
It follows that $M_\epsilon$ has support where $\chi_\epsilon\not\equiv 1$.
Note that the $\sigma_k$ are smooth outside of $Z$. By Lemma~\ref{lma:PMexistence}, 
the limit \eqref{eq:existence3} is independent of the choice of $\chi_\epsilon$.
In particular, we may assume that the section $F$ defining $\chi_\epsilon = \chi(|F|^2/\epsilon)$
is locally defined such that $\{ F = 0 \} = Z$. It then follows that the limit \eqref{eq:existence3} has support on $Z$.
That \eqref{eq:existence3} represents
$c_{\ell_1}(E)\wedge\cdots\wedge c_{\ell_m}(E)$ follows by Poincar\'e duality,
since the forms on the right hand side of \eqref{eq:existence3}
represent this class for all $\epsilon > 0$.
Also \eqref{eq:existence3} is closed since the forms on the right hand
side are for all $\epsilon>0$.
\end{proof}

\begin{remark}\label{locket}
Assume that $D = (D_k)$ in Theorem \ref{thm:existence2} is a
$(1,0)$-connection. Then
 $\widehat{\Theta}_k^\epsilon$ only has components of bidegree $(2,0)$
 and $(1,1)$, \emph{cf.}\ \eqref{trubbel}. It follows that \eqref{eq:Mepsilon}
 and consequently \eqref{eq:existence3} consist
  of components of bidegree $(\ell + q,\ell-q)$ with $q\geq 0$,
  where $\ell = \ell_1 + \cdots + \ell_m$.
   \end{remark}

\section{An explicit description of Chern currents of low degrees}\label{chernsec}

In this section we study the Chern current $c^{\Res}(E, D)$ of a complex
$(E, \varphi)$ that is
equipped with a $(1,0)$-connection $D$.
Our main result is the following generalization of
Theorem~\ref{thm:main} that
is an explicit description of $c_p^{\Res}(E, D)$ in terms of the
residue current $R$ associated with $(E, \varphi)$.

\begin{theorem} \label{thm:mainGeneral}
    Assume that $(E,\varphi)$ is a complex of Hermitian vector bundles
    of the
    form \eqref{turkos} that is pointwise exact outside a subvariety
    $Z$ of codimension $p$, and let $R$ be the corresponding residue
    current.
    Moreover, assume that $D = (D_k)$ is a $(1,0)$-connection on
    $(E,\varphi)$ and let $c^{\Res}(E, D)$ be the corresponding Chern
    current.
    Then
    \begin{equation}\label{glashus}
        c_p^{\Res}(E,D) = \frac{(-1)^{p-1}}{(2\pi i)^p p} \sum_{k = 0}^{N-p} (-1)^k \tr (D\varphi_{k + 1} \cdots D\varphi_{k + p} R^{k}_{k + p}).
      \end{equation}
Moreover
  \begin{equation}\label{sommardag}
    c_\ell^{\Res}(E,D) = 0 \quad \text{ for } 0 < \ell < p
  \end{equation}
  and
  \begin{equation}\label{badhus}
    c_{\ell_1}^{\Res}(E,D) \wedge \dots \wedge c_{\ell_m}^{\Res}(E,D) = 
    0  \quad \text{ for } m \geq 2 \text{ and } 0  < \ell_1 + \dots + \ell_m \leq p.
    \end{equation}
\end{theorem}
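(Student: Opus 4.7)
The plan is to pass to Chern character currents, which linearize the problem. Define $\ch_\ell^{\Res}(E,D) := \lim_{\epsilon\to 0}\ch_\ell(E,\widehat D^\epsilon)$, and analogously products $\ch_{\ell_1}^{\Res}(E,D)\wedge\cdots\wedge\ch_{\ell_m}^{\Res}(E,D)$; their existence as pseudomeromorphic currents supported on $Z$ is immediate from Theorem \ref{thm:existence2} since Chern characters are polynomial expressions in Chern forms. The triangular relation \eqref{sommarbad} between $c_\ell$ and $\ch_\ell$ passes to the current level, so Theorem \ref{thm:mainGeneral} reduces to the analogous statements for Chern character currents; in particular, once the lower Chern character currents are known to vanish, \eqref{sommarbad} collapses to $c_p^{\Res}(E,D)=(-1)^{p-1}(p-1)!\,\ch_p^{\Res}(E,D)$.

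For the vanishing statement \eqref{sommardag} and the case $\sum\ell_i<p$ of \eqref{badhus}, I apply a bidegree--dimension-principle argument. Since $D$ is a $(1,0)$-connection, so is $\widehat D^\epsilon$, and by Remark \ref{locket} every product $c_{\ell_1}(E,\widehat D^\epsilon)\wedge\cdots\wedge c_{\ell_m}(E,\widehat D^\epsilon)$ (and the analogous product of Chern character forms) decomposes into bidegree components $(L+q,L-q)$ with $L=\sum\ell_i$ and $q\ge 0$. Their limits are pseudomeromorphic currents supported on $Z$ of codimension $p$, so the dimension principle kills every component with $L-q<p$. This handles every case except the $(p,p)$-component when $L=p$.

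The heart of the proof is an explicit computation of the $(p,p)$-component via \eqref{eq:chernCharacterAlternativell}. Writing
\begin{equation*}
(\ch_p(E,\widehat D^\epsilon))_{(p,p)} = \frac{i^p}{(2\pi)^p p!}\sum_k(-1)^k \tr(\widehat\Theta_k^\epsilon)_{(1,1)}^p,
\end{equation*}
I split $(\widehat\Theta_k^\epsilon)_{(1,1)}$ using \eqref{trubbel} into a smooth part $(\Theta_k)_{(1,1)}$, a $\dbar\chi_\epsilon$-term $-\dbar\chi_\epsilon\wedge\sigma_k D\varphi_k$, and a $\chi_\epsilon$-part $-\chi_\epsilon\,\dbar(\sigma_k D\varphi_k)$ involving $\dbar\sigma_k$ and $\dbar D\varphi_k$ (the terms $\chi_\epsilon^2(\sigma_k D\varphi_k)^2$ and $\chi_\epsilon(\theta_k\sigma_k D\varphi_k+\sigma_k D\varphi_k\theta_k)$ contribute nothing of bidegree $(1,1)$, since $\sigma_k D\varphi_k$ and $\theta_k$ are all of bidegree $(1,0)$). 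Expanding the $p$-th power and summing over $k$, I use the superstructure trace cyclicity \eqref{spar} together with the identities \eqref{exakt}, \eqref{dexakt}, \eqref{dstreck}, \eqref{kalkon}, \eqref{eq:dbarD} and $\sigma_{k+1}\sigma_k=0$ to telescope the alternating sum. Any term with fewer than $p$ $\dbar$-factors then vanishes in the limit by the dimension principle on $Z$. The surviving terms carry exactly one $\dbar\chi_\epsilon$ and a product $\sigma_{k+p}\dbar\sigma_{k+p-1}\cdots\dbar\sigma_{k+1}$, which by \eqref{eq:Rdef} produces $R^k_{k+p}$ in the limit. Combinatorial bookkeeping ($p$ choices for the position of $\dbar\chi_\epsilon$ in the $p$-fold product, divided by $p!$) yields the coefficient in \eqref{glashus}.

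The main obstacle is the telescoping step above: isolating exactly which rearrangements of the terms in $\tr(\widehat\Theta_k^\epsilon)_{(1,1)}^p$ assemble, after the alternating sum over $k$ collapses the rest, into the residue pattern required by \eqref{eq:Rdef}, while keeping careful track of the signs inherited from the superstructure. The remaining case of \eqref{badhus} ($\sum\ell_i=p$, $m\ge 2$) is then handled by the same factorwise expansion: since $(\dbar\chi_\epsilon)^2=0$, at most one factor in $\prod_i(\ch_{\ell_i}(E,\widehat D^\epsilon))_{(\ell_i,\ell_i)}$ can contribute the crucial $\dbar\chi_\epsilon$-term, while the remaining factors, involving only smooth and $\chi_\epsilon$-terms whose individual limits are pseudomeromorphic currents of bidegree $(\ell_i,\ell_i)$ supported on $Z$ of codimension $p>\ell_i$, cannot supply the full chain $\sigma\dbar\sigma\cdots\dbar\sigma$ required to produce a nonvanishing residue; hence the limit of the product is zero.
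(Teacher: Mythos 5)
Your overall strategy matches the paper's: reduce to Chern character currents via \eqref{sommarbad}, kill everything below the critical bidegree with the dimension principle, and convert the $(p,p)$-part of $\sum_k(-1)^k\tr(\widehat\Theta_k^\epsilon)_{(1,1)}^p$ into the residue current by telescoping with \eqref{exakt}, \eqref{dstreck}, \eqref{kalkon} and \eqref{spar}. But there is a genuine gap in the step where you dispose of the terms that are supposed to die. After splitting $(\widehat\Theta_k^\epsilon)_{(1,1)}$ into the smooth part $(\Theta_k)_{(1,1)}$, the $\dbar\chi_\epsilon$-term and the $\chi_\epsilon$-part $-\chi_\epsilon\dbar(\sigma_kD\varphi_k)$, the expansion of the $p$-th power contains terms built solely from the first and third pieces, \emph{e.g.}\ $\chi_\epsilon^p\,\tr\big(\dbar(\sigma_kD\varphi_k)\big)^p$. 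Such a term carries $p$ factors of $\dbar$ but no $\dbar\chi_\epsilon$ and no factor $(1-\chi_\epsilon)$, so its limit is a principal-value extension of a smooth form defined off $Z$; it is \emph{not} supported on $Z$, the dimension principle says nothing about it, and it need not vanish. Your blanket claim that ``any term with fewer than $p$ $\dbar$-factors vanishes in the limit by the dimension principle'' does not cover these terms.

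The paper's proof of Lemma \ref{struntsak} handles exactly this point: it groups the smooth curvature together with the $\chi_\epsilon$-part into $\beta_k=\chi_\epsilon(\widetilde\Theta_k)_{(1,1)}$, where $\widetilde\Theta_k$ is the curvature of the Baum--Bott compatible connection $\widetilde D_k=-\sigma_kD\varphi_k+D_k$ of Remark \ref{national}, and then invokes the Whitney formula for compatible connections on exact complexes (Lemma \ref{lma:BBvanishing}) to conclude that $\sum_k(-1)^k\tr\beta_k^\ell=\chi_\epsilon^\ell\, p_{(\ell,\ell)}(E,\widetilde D)=0$ identically. Without this input (or an equivalent algebraic identity proved by hand), neither \eqref{glashus} nor the product vanishing \eqref{badhus} for $\ell_1+\cdots+\ell_m=p$ is established: in the product case, the factors that do not contain $\dbar\chi_\epsilon$ again have limits that are not supported on $Z$, so your ``cannot supply the full chain'' argument needs the same vanishing. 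The rest of your outline --- the reduction via Chern characters, the bidegree and dimension-principle arguments, the telescoping and the identification with $R^k_{k+p}$ via \eqref{eq:Rdef} --- is in line with the paper's proof.
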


In fact, Theorem \ref{thm:mainGeneral} follows from the following
formulation in terms of the Chern character (forms).
For $(E, \varphi)$ and $D$ as in the theorem and for $\ell_1,\dots,\ell_m \geq 1$ we let
\begin{equation}\label{husvagn}
  \ch^{\Res}_{\ell_1}(E, D) \wedge \dots \wedge \ch^{\Res}_{\ell_m}(E, D):= 
  \lim_{\epsilon\to 0} \ch_{\ell_1}(E, \widehat D^\epsilon) \wedge \dots\wedge \ch_{\ell_m}(E, \widehat D^\epsilon),
\end{equation}
where $\widehat D^\epsilon$ is the connection defined by
\eqref{eq:compatibleConnection}.
By Theorem \ref{thm:existence2} this is a well-defined current with
support on $Z$ that represents $\ch_{\ell_1}(E)\wedge\dots\wedge \ch_{\ell_m}(E)$.

\begin{theorem}\label{saltkar}
Assume that $(E, \varphi)$, $D$, $R$, and $p$ are as in Theorem
\ref{thm:mainGeneral}. For $\ell\geq 1$, let $\ch_\ell^{\Res}(E, D)$ be the corresponding
Chern character current \eqref{husvagn}. Then
\begin{equation}\label{glashus2}
        \ch_p^{\Res}(E,D) = \frac{1}{(2\pi i)^p p!} \sum_{k = 0}^{N-p} (-1)^k \tr (D\varphi_{k + 1} \cdots D\varphi_{k + p} R^{k}_{k + p}).
      \end{equation}
Moreover
  \begin{equation}\label{glashus3}
    \ch_\ell^{\Res}(E,D) = 0 \quad \text{ for }  \ell < p
  \end{equation}
  and
  \begin{equation}\label{glashus4}
    \ch_{\ell_1}^{\Res}(E,D) \wedge \dots \wedge \ch_{\ell_m}^{\Res}(E,D) = 
    0 \quad \text{ for } m \geq 2 \text{ and }  \ell_1 + \dots + \ell_m \leq p.
    \end{equation}
 \end{theorem}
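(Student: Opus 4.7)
My plan is to handle the vanishing statements \eqref{glashus3} and \eqref{glashus4} mostly by the dimension principle and the standard extension property (SEP), and to reduce the main formula \eqref{glashus2} to a concrete computation of the $(1,1)$-part of the curvature $\widehat\Theta^\epsilon_k$.

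By Theorem~\ref{thm:existence2}, the currents in \eqref{eq:existence3} are pseudomeromorphic with support in $Z$ of codimension $p$, and by Remark~\ref{locket} each has bidegree components of the form $(L+r, L-r)$ with $r \geq 0$, where $L = \ell$ or $L = \ell_1 + \cdots + \ell_m$. Whenever $L < p$, the $\dbar$-degree $L - r$ of every such component is strictly less than $\codim Z$, so the dimension principle (Section~\ref{hemma}) forces vanishing, giving \eqref{glashus3} and the part of \eqref{glashus4} with $L < p$. In the remaining case $m \geq 2$, $L = p$, only the bidegree $(p,p)$ piece survives the dimension principle. That piece also vanishes: as computed below, each $\ch_{\ell_i}(E, \widehat D^\epsilon)$ contains at most one $\dbar\chi_\epsilon$, so a wedge of $m \geq 2$ such factors in bidegree $(p,p)$ would require at least two $\dbar\chi_\epsilon$'s, which is zero since $(\dbar\chi_\epsilon)^2 = 0$; the terms with no $\dbar\chi_\epsilon$ form an almost semi-meromorphic current that vanishes on $\{\chi_\epsilon = 1\}$ by Lemma~\ref{lma:BBvanishing} (for the Chern character via \eqref{sommarbad}), hence on $X \setminus Z$ in the limit, and then globally by SEP.

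For \eqref{glashus2}, the dimension principle gives $\ch_p^{\Res}(E,D) = \lim_\epsilon \ch_{(p,p)}(E, \widehat D^\epsilon)$, so by \eqref{eq:chernCharacterAlternativell}
\[
  \ch_p^{\Res}(E,D) = \frac{i^p}{(2\pi)^p p!} \lim_{\epsilon \to 0} \sum_{k = 0}^N (-1)^k \tr\bigl[(\widehat\Theta^\epsilon_k)_{(1,1)}\bigr]^p.
\]
Writing $\widehat D^\epsilon_k = D_k + a_k$ with $a_k = -\chi_\epsilon \sigma_k D\varphi_k$, the form $a_k$ is of pure bidegree $(1,0)$, so $a_k^2$ is $(2,0)$ and $(\widehat\Theta^\epsilon_k)_{(1,1)} = (\Theta_k)_{(1,1)} + \dbar a_k$. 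A short calculation via \eqref{eq:dbarLeibniz} and \eqref{eq:dbarD} telescopes the cross-terms involving $(\Theta_{k\pm 1})_{(1,1)}$ to the compact form
\[
  (\widehat\Theta^\epsilon_k)_{(1,1)} = (\Theta_k)_{(1,1)} - \dbar(\chi_\epsilon \sigma_k D\varphi_k).
\]

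It remains to identify the limit $\lim_\epsilon \sum_k (-1)^k \tr[(\Theta_k)_{(1,1)} + \dbar a_k]^p$ with the residue-current expression. Expanding the $p$-th power and splitting $\dbar a_k = -\dbar\chi_\epsilon \wedge \sigma_k D\varphi_k - \chi_\epsilon \dbar(\sigma_k D\varphi_k)$, terms with two or more $\dbar\chi_\epsilon$'s vanish by $(\dbar\chi_\epsilon)^2 = 0$, and terms with none form an almost semi-meromorphic current that vanishes in the limit by the SEP argument of the second paragraph. The surviving terms each contain exactly one $X_k := -\dbar\chi_\epsilon \wedge \sigma_k D\varphi_k$, with the other $p-1$ positions filled by $(\Theta_j)_{(1,1)}$ or $Y_j := -\chi_\epsilon \dbar(\sigma_j D\varphi_j)$. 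Using super-cyclicity \eqref{spar} and letting $\chi_\epsilon \to 1$ via SEP in the non-$\dbar\chi_\epsilon$ factors, these collapse to a multiple of $\dbar\chi_\epsilon \wedge \sum_k (-1)^k \tr\bigl(\sigma_k D\varphi_k\, \bigl[(\Theta_k)_{(1,1)} - \dbar(\sigma_k D\varphi_k)\bigr]^{p-1}\bigr)$. Expanding this and applying \eqref{exakt}, \eqref{identitet}, \eqref{dexakt}, \eqref{dstreck}, \eqref{eq:dbarD}, and $\sigma_{k+1}\sigma_k=0$ shows that all terms containing a $(\Theta_j)_{(1,1)}$ telescope or cancel, leaving only the pure $\dbar\sigma$-chain $\dbar\chi_\epsilon \wedge \sum_k(-1)^k \tr(D\varphi_{k+1}\cdots D\varphi_{k+p}\,\sigma_{k+p}\dbar\sigma_{k+p-1}\cdots\dbar\sigma_{k+1})$, which matches $R^k_{k+p}$ in the limit by \eqref{eq:Rdef}. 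The final coefficient $1/(2\pi i)^p p!$ comes from combining $i^p/(2\pi)^p p!$ with the combinatorial factor $p$ and a $(-1)^p$ from rearranging super-signs. The main obstacle is precisely this combinatorial collapsing step: verifying that after super-cyclic rearrangement the $\Theta$-containing contributions cancel via \eqref{eq:dbarD} and $\sigma\varphi + \varphi\sigma = \Id$, with all super-signs tracked correctly to produce the stated coefficient.
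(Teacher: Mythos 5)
Your overall strategy is the same as the paper's (restrict to bidegree $(p,p)$ by the dimension principle, compute $(\widehat\Theta_k^\epsilon)_{(1,1)} = (\Theta_k)_{(1,1)} - \dbar(\chi_\epsilon\sigma_k D\varphi_k)$, kill the terms without $\dbar\chi_\epsilon$ using the Whitney vanishing for the compatible connection, and convert the single-$\dbar\chi_\epsilon$ terms into the chains $\sigma_{k+p}\dbar\sigma_{k+p-1}\cdots\dbar\sigma_{k+1}D\varphi_{k+1}\cdots D\varphi_{k+p}$ defining $R^k_{k+p}$), but two steps are genuinely incomplete. First, the ``combinatorial collapsing step'' that you yourself flag as the main obstacle is precisely the content of the paper's Lemma~\ref{klunga}, which is the technical heart of the proof: a double induction (on the exponent $r+s$ and on the chain length $m$) built on the identity \eqref{kalkon}, using \eqref{exakt}, \eqref{dstreck}, \eqref{dexakt} to move $D$'s and $\dbar$'s, replacing $\sigma_k\varphi_k$ by $\Id - \varphi_{k+1}\sigma_{k+1}$ via \eqref{identitet}, checking that the boundary terms $\rho^{r,s}_{0,m}$ and $\rho^{r,s}_{N-m,m}$ vanish, and controlling error terms not only in $O_{Z,\ell,\epsilon}$ but also in $\dbar O_{Z,\ell,\epsilon}$ (the term $\xi$ in \eqref{tulpan2} is only a $\dbar$-exact perturbation of something negligible). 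Asserting that the $\Theta$-terms ``telescope or cancel'' does not substitute for this; in fact the $\Theta$-terms are disposed of early (they sit next to $\dbar\chi_\epsilon^\ell$ and a smooth $(1,1)$-form, hence lie in $O_{Z,\ell,\epsilon}$), and the real work is the index-shifting of the $\sigma$'s and $D\varphi$'s.

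Second, your argument for \eqref{glashus4} (and for discarding cross terms generally) mishandles the products containing exactly one $\dbar\chi_\epsilon$: one factor contributing its $\dbar\chi_\epsilon$-part and the remaining $m-1$ factors contributing their $\dbar\chi_\epsilon$-free parts. These are not killed by $(\dbar\chi_\epsilon)^2 = 0$, and the SEP argument fails for them: knowing that the $\dbar\chi_\epsilon$-free factor tends to $0$ (or is a.s.m.\ with support on $Z$) does not imply that $\dbar\chi_\epsilon$ wedged with it tends to $0$ --- this is exactly the mechanism by which residue currents are nonzero, and indeed in Section~\ref{triumf} such a mixed term produces the nonzero current $(c_1^{\Res})^2(E,D) = m^2\omega\wedge[y=0]$ with $m=2$ factors. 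Your argument, as written, would ``prove'' that this vanishes. The correct mechanism, encoded in the paper's classes $O_{Z,\ell,\epsilon}$ and Lemma~\ref{lma:OdimensionPrinciple}, is an antiholomorphic-degree drop: after the Whitney cancellation $\sum_k(-1)^k\tr\beta_k^{\ell} = \chi_\epsilon^{\ell}\tilde p_{\ell} = 0$, what remains of the $\dbar\chi_\epsilon$-free part of each $p_{(\ell_i,\ell_i)}(E,\widehat D^\epsilon)$ is a smooth $\epsilon$-independent form times an $\epsilon$-dependent form of antiholomorphic degree strictly less than $\ell_i$ vanishing where $\chi_\epsilon\equiv 1$; hence every mixed product has, modulo smooth factors, antiholomorphic degree strictly less than $p$ and dies by the dimension principle when $\ell_1+\cdots+\ell_m\leq p$. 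This degree bookkeeping is where the hypothesis $\ell_1+\cdots+\ell_m\leq p$ actually enters, and it is missing from your proposal.
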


\begin{proof}[Proof of Theorem~\ref{thm:mainGeneral}]
Since $\widehat Q_\ell$ in \eqref{sommarbad} is a
polynomial of weighted degree $\ell$ we get that
\begin{equation}\label{kvavt}
\widehat Q_\ell\big(\ch_1^{\Res}(E, D), \ldots, \ch_{\ell-1}^{\Res}(E,
D)\big)
\end{equation}
is a sum of terms
\begin{equation}\label{verkligen}
  \ch_{\lambda_1}^{\Res}(E,D) \wedge \dots \wedge
  \ch_{\lambda_s}^{\Res}(E,D),  \text{ where } s\geq 2 \text{ and } \lambda_1 + \cdots + \lambda_s = \ell.
  \end{equation}
  Thus \eqref{kvavt}
vanishes by \eqref{glashus4} for $\ell\leq p$. Now \eqref{glashus} and
\eqref{sommardag} follow by \eqref{sommarbad},
\eqref{glashus2}, and \eqref{glashus3}.
Also, the left hand side of \eqref{badhus} is a sum of terms of the
form \eqref{verkligen} and thus it vanishes.
\end{proof}

\begin{remark}\label{badkar}
  Taking Theorem \ref{thm:mainGeneral} for granted, by similar
  arguments as in the proof above, using
  \eqref{sommar}, we get Theorem \ref{saltkar}. Thus
  Theorems \ref{thm:mainGeneral} and \ref{saltkar} are equivalent.
  \end{remark}

Recall from Section~\ref{ssect:AW1} that if
  $(E,\varphi)$ is a locally free resolution of a sheaf $\mathcal F$
of codimension $p$,
then $R^\ell_k = 0$ for
$\ell>0$. Thus the only nonvanishing residue current in
\eqref{glashus} is $R_p = R_p^0$, and hence
Theorem~\ref{thm:main} follows.
It may be noted that our proof of Theorem~\ref{thm:mainGeneral} does not become simpler
in the situation of Theorem~\ref{thm:main}.

\smallskip

To organize the proof of Theorem~\ref{saltkar} we will introduce
a certain class $O_{Z,\ell,\epsilon}$ of forms
depending on $\epsilon>0$ that in the limit are pseudomeromorphic
currents with support on $Z$ that vanish if $\ell\leq \codim Z$.
Throughout this section, let $(E,\varphi)$ be fixed as the complex
from Theorems~\ref{thm:mainGeneral} and~\ref{saltkar}
and let $\sigma_k$ be the minimal inverse of $\varphi_k$ as in Section~\ref{ssect:AW1}. Let $\mathcal{E}_{q,\epsilon}$ denote smooth forms of bidegree $(*,q)$
that can be written as polynomials in $\chi_\epsilon$, $\dbar\chi_\epsilon$,
entries of $\sigma_k$, $\partial\sigma_k$ or $\dbar\sigma_k$ in some
local trivialization for $k = 1,\dots,N$, and smooth forms independent of $\epsilon$.
Here
$\chi_\epsilon = \chi(|F|^2/\epsilon)$, where $\chi$ is a smooth
approximand of $\chi_{[1,\infty)}$ and $F$ is a generically non-vanishing section of a
holomorphic vector bundle such that $Z = \{F = 0\}$.
We say that $\psi_\epsilon\in \mathcal E_\epsilon := \oplus \mathcal{E}_{q,\epsilon}$ is in $O_{Z,\ell,\epsilon}$
if $\psi_\epsilon$ is a sum of terms of the form
$a\wedge b_{\epsilon}$, where $a$ is a smooth
form that is independent of
$\epsilon$, and $b_{\epsilon}$ is in $\mathcal E_{q, \epsilon}$, where
$q<\ell$, and vanishes where
$\chi_\epsilon \equiv 1$.
In particular, if $\psi_\epsilon\in \mathcal E_{q,\epsilon}$ vanishes
where $\chi_\epsilon \equiv 1$, then $\psi_\epsilon\in O_{Z,\ell,
  \epsilon}$ for any $\ell>q$.
Note that
\begin{equation} \label{eq:Oideal}
    \mathcal{E}_{q,\epsilon} \wedge O_{Z,\ell,\epsilon} \subseteq O_{Z,\ell + q,\epsilon}.
\end{equation}

\begin{lemma} \label{lma:OdimensionPrinciple}
    Assume that $Z$ has codimension $p$, and let $\psi_\epsilon$ be a form in $O_{Z,\ell,\epsilon}$
    with $\ell \leq p$.
    Then
       $ \lim_{\epsilon \to 0} \psi_\epsilon = 0.$
\end{lemma}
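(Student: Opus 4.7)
The approach is to reduce to the dimension principle for pseudomeromorphic currents recalled in Section~\ref{hemma}. By linearity it suffices to treat a single summand $\psi_\epsilon = a \wedge b_\epsilon$ of the type appearing in the definition of $O_{Z,\ell,\epsilon}$, where $a$ is a smooth form independent of $\epsilon$ and $b_\epsilon \in \mathcal{E}_{q,\epsilon}$ for some $q < \ell$ vanishes where $\chi_\epsilon \equiv 1$. Since $a$ is smooth and fixed, it is enough to show that $b := \lim_{\epsilon \to 0} b_\epsilon$ exists as a pseudomeromorphic current and equals zero.

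To establish existence and pseudomeromorphicity, I would expand $b_\epsilon$ as a polynomial in $\chi_\epsilon$, $\dbar\chi_\epsilon$, entries of $\sigma_k$, $\partial\sigma_k$, $\dbar\sigma_k$, and smooth $\epsilon$-independent forms. Since $\dbar\chi_\epsilon$ is a $1$-form, it appears at most linearly in each monomial, so, using~\eqref{information} to rewrite $\chi_\epsilon^M \dbar\chi_\epsilon = \frac{1}{M+1}\dbar\hat\chi_\epsilon$ with $\hat\chi := \chi^{M+1}$, every term has one of the shapes $\chi_\epsilon^M\, s\, \omega$ or $\dbar\hat\chi_\epsilon \wedge s\, \omega$, where $\omega$ is smooth and $\epsilon$-independent and $s$ is a product of entries of $\sigma_k$, $\partial\sigma_k$, $\dbar\sigma_k$. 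Both $\chi^M$ and $\hat\chi$ are smooth approximands of $\chi_{[1,\infty)}$, so Lemma~\ref{lma:PMexistence} produces the limit of each such term as a pseudomeromorphic current on $X$. Thus $b$ exists, is pseudomeromorphic, and has bidegree $(*,q)$.

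Next I would check that $\supp b \subseteq Z$. On any relatively compact open subset $U$ of $X \setminus Z$, the quantity $|F|^2$ is bounded below by a positive constant, so $\chi_\epsilon \equiv 1$ on $U$ for all sufficiently small $\epsilon$; by hypothesis $b_\epsilon \equiv 0$ on $U$ for such $\epsilon$, and hence $b$ vanishes on $X \setminus Z$.

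Finally, the dimension principle from Section~\ref{hemma} applies: a pseudomeromorphic current of bidegree $(*,q)$ supported in an analytic set of codimension strictly greater than $q$ vanishes. Since $\codim Z = p \geq \ell > q$, we conclude $b = 0$, and therefore $\lim_{\epsilon\to 0}\psi_\epsilon = a\wedge b = 0$. I do not foresee any real obstacle: the substantive input is the dimension principle, and the remainder is routine bookkeeping to put $b_\epsilon$ into a form handled by Lemma~\ref{lma:PMexistence}.
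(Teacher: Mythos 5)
Your proposal is correct and follows essentially the same route as the paper: reduce to a single term $a\wedge b_\epsilon$, use Lemma~\ref{lma:PMexistence} to get a pseudomeromorphic limit $b$ of bidegree $(*,q)$ with $q<\ell\leq p$, observe that $b$ is supported on $Z$ because $b_\epsilon$ vanishes where $\chi_\epsilon\equiv 1$, and conclude by the dimension principle. The extra bookkeeping you include (splitting monomials into the shapes $\chi_\epsilon^M s\,\omega$ and $\dbar\hat\chi_\epsilon\wedge s\,\omega$ via \eqref{information}) is exactly how the paper itself reduces to Lemma~\ref{lma:PMexistence} in the proof of Theorem~\ref{thm:existence2}, so nothing is missing.
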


\begin{proof}
    Consider a term $a \wedge b_\epsilon$ of $\psi_\epsilon$
    as above. Then $b_\epsilon \in \mathcal E_{q,\epsilon}$, where $q<\ell\leq p$.
By Lemma~\ref{lma:PMexistence}, the limit $b := \lim_{\epsilon \to 0}
b_\epsilon$ exists and
is a pseudomeromorphic current of bidegree $(*,q)$. Since  $b_\epsilon
    \equiv 0$ where $\chi_\epsilon \equiv 1$, $b$ has support on $Z$
    and thus $b = 0$ by the dimension principle, see Section~\ref{hemma}.
    Since $a$ is smooth and independent of $\epsilon$, it follows that
    $\lim_{\epsilon \to 0} (a\wedge b_\epsilon) = a \wedge b = 0$.
\end{proof}

Throughout this section, let $D = (D_k)$ be a $(1,0)$-connection on $(E,
\varphi)$, let $\chi\sim
\chi_{[1,\infty)}$, let $\chi_\epsilon$ and  $\widehat D^\epsilon$ be defined
as in Section~\ref{taget}, and let
$c(E, \widehat D^\epsilon) = \sum c_\ell(E,
\widehat D^\epsilon)$ be the corresponding Chern form defined by
\eqref{syster}.
Since the limits in Theorem \ref{thm:existence2} are independent of the choice of $\chi_\epsilon$,
and the results in this section are local statements, we may assume locally that the section $F$ in the definition 
of $\chi_\epsilon=\chi(|F|^2/\epsilon)$ is such that $\{ F = 0 \} = Z$.

\begin{lemma}\label{struntsak}
    For $\ell\geq 1$ and $\epsilon >0$, we have
     \begin{equation}\label{regna}
  \ch_\ell(E,
\widehat D^\epsilon) = 
        \frac{1}{(2\pi i)^\ell \ell!} \dbar\chi_\epsilon^\ell \wedge \sum_{k = 1}^N (-1)^k
        \tr \big(\sigma_k D\varphi_k(\dbar\sigma_k
        D\varphi_k)^{\ell-1}\big) + 
       O_{Z,\ell,\epsilon}.
    \end{equation}
\end{lemma}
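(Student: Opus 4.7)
The plan is to use \eqref{eq:chernCharacterAlternative} to reduce to computing $p_\ell(E,\widehat D^\epsilon)=\sum_k(-1)^k\tr(\widehat\Theta_k^\epsilon)^\ell$ and exploit the nilpotency $(\dbar\chi_\epsilon)^2=0$. Starting from \eqref{trubbel}, I decompose
\[
\widehat\Theta_k^\epsilon=-\dbar\chi_\epsilon\wedge\sigma_kD\varphi_k+B_k^\epsilon,
\]
where $B_k^\epsilon$ collects the remaining terms of \eqref{trubbel} (noting that $\theta_k$ is of type $(1,0)$ since $D_k$ is a $(1,0)$-connection), and in particular contains no $\dbar\chi_\epsilon$. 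Because every factor has even total degree, the cyclic trace identity \eqref{spar} carries trivial sign, and the multinomial expansion collapses to
\[
\tr(\widehat\Theta_k^\epsilon)^\ell=\tr(B_k^\epsilon)^\ell-\ell\,\dbar\chi_\epsilon\wedge\tr\bigl(\sigma_kD\varphi_k\,(B_k^\epsilon)^{\ell-1}\bigr).
\]

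By Lemma~\ref{lma:compatible}, $\widehat D^\epsilon$ is compatible with $(E,\varphi)$ on the open set where $\chi_\epsilon\equiv 1$, so Lemma~\ref{lma:BBvanishing} (applied after writing the Chern character as a polynomial in Chern classes, see Section~\ref{hemmavid}) forces $\ch_\ell(E,\widehat D^\epsilon)$ to vanish identically there. Each bidegree component $(\ell+q,\ell-q)$ of $\ch_\ell(E,\widehat D^\epsilon)$ with $q\geq 1$ then has $\dbar$-degree $<\ell$ and vanishes where $\chi_\epsilon\equiv 1$, so it lies in $O_{Z,\ell,\epsilon}$ automatically. The lemma therefore reduces to an exact identification of the $(\ell,\ell)$-bidegree part of $\ch_\ell(E,\widehat D^\epsilon)$ with the right-hand side of \eqref{regna}.

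For this $(\ell,\ell)$-part I apply \eqref{eq:dbarLeibniz} to $\dbar(\sigma_kD\varphi_k)$, then \eqref{eq:dbarD} to $\dbar D\varphi_k$, and finally \eqref{identitet} in the form $\sigma_k\varphi_k=\mathrm{Id}-\varphi_{k+1}\sigma_{k+1}$, obtaining
\[
(B_k^\epsilon)_{(1,1)}=-\chi_\epsilon\,\dbar\sigma_k\,D\varphi_k+\chi_\epsilon\bigl(\sigma_k(\Theta_{k-1})_{(1,1)}\varphi_k+\varphi_{k+1}\sigma_{k+1}(\Theta_k)_{(1,1)}\bigr)+(1-\chi_\epsilon)(\Theta_k)_{(1,1)}.
\]
Under the alternating trace sum $\sum_k(-1)^k\tr$, the curvature-$\varphi$-$\sigma$ pair telescopes under the reindexing $k\mapsto k+1$ to zero. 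The \emph{main obstacle} is the bookkeeping of the $\ell$-fold products $\tr\bigl(\sigma_kD\varphi_k((B_k^\epsilon)_{(1,1)})^{\ell-1}\bigr)$ and $\tr\bigl(((B_k^\epsilon)_{(1,1)})^\ell\bigr)$: one peels off the leading contribution $\chi_\epsilon^{\ell-1}\tr\bigl(\sigma_kD\varphi_k(-\dbar\sigma_kD\varphi_k)^{\ell-1}\bigr)$, and shows that every remaining cross term can be reorganized as $\sum a\wedge b_\epsilon$ with $a$ a smooth $\epsilon$-independent form and $b_\epsilon$ a scalar polynomial in $\chi_\epsilon$ lying in $\mathcal E_{0,\epsilon}$ and divisible by $(1-\chi_\epsilon)$. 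This reorganization invokes the key identity $\sum_k(-1)^k\tr((\widetilde\Theta_k)_{(1,1)})^\ell=0$ on $X\setminus Z$ (valid because $\widetilde D$ from Remark~\ref{national} is compatible) together with the telescoping cancellations above extended to nested curvature-$\varphi$-$\sigma$ combinations. Once this is in hand, assembling the constants via $\dbar\chi_\epsilon^\ell=\ell\chi_\epsilon^{\ell-1}\dbar\chi_\epsilon$ (see \eqref{information}) and $\tfrac{(-1)^{\ell-1}i^\ell}{(2\pi)^\ell(\ell-1)!}=\tfrac{1}{(2\pi i)^\ell(\ell-1)!}$ yields \eqref{regna}.
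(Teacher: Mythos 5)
Your overall strategy is the paper's: reduce to $p_{(\ell,\ell)}(E,\widehat D^\epsilon)$ via \eqref{eq:chernCharacterAlternativell}, dispose of the bidegree-$(\ell+q,\ell-q)$ components with $q>0$ using Lemmas~\ref{lma:compatible} and~\ref{lma:BBvanishing}, split off the $\dbar\chi_\epsilon\wedge\sigma_kD\varphi_k$ piece of the curvature, and use $(\dbar\chi_\epsilon)^2=0$ together with cyclicity of the trace. However, the step you label the ``main obstacle'' --- showing that $\sum_k(-1)^k\tr\big((B_k^\epsilon)_{(1,1)}\big)^\ell$ is negligible and that the cross terms in $\dbar\chi_\epsilon\wedge\tr\big(\sigma_kD\varphi_k(B_k^\epsilon)^{\ell-1}\big)$ reduce to the leading term --- is precisely the content of the lemma, and you only assert it. The route you sketch for it (expand $(B_k^\epsilon)_{(1,1)}$ via \eqref{identitet} and telescope ``nested curvature-$\varphi$-$\sigma$ combinations'' under $k\mapsto k+1$) works for $\ell=1$ but is not shown to work inside $\ell$-fold products, where the reindexed terms sit between different neighbouring factors and do not pair off; making that telescoping rigorous is a genuinely nontrivial combinatorial problem that you then bypass by invoking $\sum_k(-1)^k\tr((\widetilde\Theta_k)_{(1,1)})^\ell=0$ anyway. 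The clean argument is to \emph{not} expand via \eqref{identitet} at all: one has exactly $(B_k^\epsilon)_{(1,1)}=\chi_\epsilon(\widetilde\Theta_k)_{(1,1)}+(1-\chi_\epsilon)(\Theta_k)_{(1,1)}$ with $\widetilde D_k=-\sigma_kD\varphi_k+D_k$ the compatible connection of Remark~\ref{national}, so $\sum_k(-1)^k\tr\big(\chi_\epsilon(\widetilde\Theta_k)_{(1,1)}\big)^\ell=\chi_\epsilon^\ell\,\tilde p_\ell=0$ identically by Lemma~\ref{lma:BBvanishing}, with no telescoping, while every term containing a factor $(1-\chi_\epsilon)(\Theta_k)_{(1,1)}\in O_{Z,1,\epsilon}$ lands in $O_{Z,\ell,\epsilon}$ by \eqref{eq:Oideal}.

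A second, smaller problem: your description of the remaining cross terms as $a\wedge b_\epsilon$ with ``$b_\epsilon$ a scalar polynomial in $\chi_\epsilon$ lying in $\mathcal E_{0,\epsilon}$'' is incorrect. Those terms carry $\dbar\chi_\epsilon$ and, in general, several factors $\dbar\sigma_k$, so $b_\epsilon$ has positive antiholomorphic degree; the correct point is that each curvature factor $(\Theta_j)_{(1,1)}$ is a smooth $\epsilon$-independent $(1,1)$-form, so the $\epsilon$-dependent part has antiholomorphic degree $q<\ell$ and still contains $\dbar\chi_\epsilon$ (hence vanishes where $\chi_\epsilon\equiv 1$), which is what membership in $O_{Z,\ell,\epsilon}$ requires. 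Finally, check your constant bookkeeping: as written, $\frac{(-1)^{\ell-1}i^\ell}{(2\pi)^\ell}=-\frac{1}{(2\pi i)^\ell}$, not $+\frac{1}{(2\pi i)^\ell}$; the correct assembly is $\frac{i^\ell}{(2\pi)^\ell\ell!}\cdot(-1)^\ell\,\dbar\chi_\epsilon^\ell=\frac{1}{(2\pi i)^\ell\,\ell!}\,\dbar\chi_\epsilon^\ell$, where the $(-1)^\ell$ collects the sign from $\alpha_k=-\dbar\chi_\epsilon\wedge\sigma_kD\varphi_k$ and the $\ell-1$ signs from $-\chi_\epsilon\dbar\sigma_kD\varphi_k$, and $\ell\chi_\epsilon^{\ell-1}\dbar\chi_\epsilon=\dbar\chi_\epsilon^\ell$ absorbs the factor $\ell$.
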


\begin{proof}
  We may work in a local trivialization; let $\widehat
  \Theta_k^\epsilon$ be the curvature matrix of $\widehat
  D^\epsilon_k$.
  By Remark \ref{locket}, since the $D_k$ are $(1,0)$-connections, $\ch_\ell(E, \widehat D^\epsilon)$ consists of
  components of bidegree $(\ell + q,\ell-q)$ with $q \geq 0$.
From the proof of Theorem~\ref{thm:existence2} it follows that $c(E,\widehat D^\epsilon)$ is
in $\mathcal E_\epsilon$ and vanishes where $\chi_\epsilon\equiv 1$, and consequently,
the same holds for $\ch(E,\widehat D^\epsilon)$.
It follows that
$\ch_{(\ell + q,
  \ell-q)}(E, \widehat D^\epsilon)\in O_{Z,\ell,\epsilon}$ for $q>0$,
so
  \begin{equation} \label{eq:chlll}
      \ch_\ell(E,\widehat D^\epsilon) = \ch_{(\ell,\ell)}(E, \widehat
      D^\epsilon) + O_{Z,\ell,\epsilon}.
  \end{equation}
Since $\widehat D^\epsilon$ is a $(1,0)$-connection, by
\eqref{eq:chernCharacterAlternativell},
\begin{equation}\label{husdjur}
  \ch_{(\ell,\ell)}(E, \widehat
  D^\epsilon) = 
  \frac{i^\ell}{(2\pi)^\ell \ell!} p_{(\ell, \ell)}(E, \widehat D^\epsilon),
\end{equation}
where $p_{(\ell, \ell)}$ is given by \eqref{eq:pl11}.
To prove the lemma it thus suffices to show that
\begin{equation}\label{eq:pl}
         p_{(\ell, \ell)}(E, \widehat D^\epsilon) = (-1)^\ell \dbar\chi_\epsilon^\ell \wedge \sum_{k = 1}^N
        (-1)^k \tr \big(\sigma_k D\varphi_k (\dbar\sigma_k
        D\varphi_k)^{\ell-1}\big) + O_{Z,\ell,\epsilon}.
      \end{equation}

\smallskip

    To prove \eqref{eq:pl}, first note in view of \eqref{trubbel} that since $D$ is a $(1,0)$-connection,
\begin{equation}\label{snabel}
  (\widehat{\Theta}_k^\epsilon)_{(1,1)} = -\dbar(\chi_\epsilon
  \sigma_k D\varphi_k) + (\Theta_k)_{(1,1)},
\end{equation}
where $\Theta_k$ is the curvature matrix of $D_k$.
We make the following decomposition:
\begin{equation}
  \label{fabel}
  \begin{split}
   -\dbar(\chi_\epsilon \sigma_k D\varphi_k) + (\Theta_k)_{(1,1)}
    & = -\dbar\chi_\epsilon \wedge \sigma_k D\varphi_k -
    \chi_\epsilon \big(\dbar(\sigma_k D\varphi_k)
   - (\Theta_k)_{(1,1)} \big) + (1-\chi_\epsilon)(\Theta_k)_{(1,1)} \\
    & = : \alpha_k + \beta_k + \gamma_k.
  \end{split}
\end{equation}
Let us consider
\begin{equation*}
  \tr (\widehat{\Theta}_k^\epsilon)^\ell_{(1,1)} = \tr
  (\alpha_k + \beta_k + \gamma_k)^\ell
\end{equation*}
and expand the product.  Note that $\gamma_k\in O_{Z,1,\epsilon}$, and
thus by \eqref{eq:Oideal} all terms with a factor $\gamma_k$ are in
$O_{Z, \ell, \epsilon}$.  Next, note that since
$(\dbar\chi_\epsilon)^2 = 0$, $\alpha_k^2 = 0$, and since $\alpha_k$
and $\beta_k$ have total degree $4$ and endomorphism degree $2$, all
terms containing one $\alpha_k$ and the remaining $\ell-1$ factors
being $\beta_k$ are all equal to 
$\tr (\alpha_k \wedge \beta_k^{\ell-1})$ by \eqref{spar}.
To conclude,
\begin{equation} \label{eq:trThetakl}
  \tr (\widehat{\Theta}_k^\epsilon)^\ell_{(1,1)}
  = \ell \tr (\alpha_k \wedge\beta_k^{\ell-1})
  + \tr \beta_k^\ell + O_{Z,\ell,\epsilon}.
\end{equation}
We have that
\begin{align*}
  \ell\tr (\alpha_k \wedge \beta_k^{\ell-1})
  &= (-1)^\ell \dbar\chi_\epsilon^\ell \wedge
    \tr \big(\sigma_k D\varphi_k
    \big(
    \dbar(\sigma_k D\varphi_k)-(\Theta_k)_{(1,1)}
    \big)^{\ell-1}\big) \\
  &= (-1)^\ell \dbar\chi_\epsilon^\ell \wedge \tr \big(\sigma_k D\varphi_k
  \big(\dbar(\sigma_k D\varphi_k)\big) ^{\ell-1} \big) + O_{Z,\ell,\epsilon},
\end{align*}
since
$\ell\chi_\epsilon^{\ell-1}\dbar\chi_\epsilon = \dbar\chi_\epsilon^\ell$,
\emph{cf.}\ \eqref{information},
and in the middle expression all terms having a factor  $(\Theta_k)_{(1,1)}$
also contain a factor $\dbar\chi_\epsilon^\ell$, and thus are in $O_{Z,\ell,\epsilon}$.
Moreover, by \eqref{eq:dbarLeibniz} and \eqref{eq:dbarD},
\begin{equation*}
  \dbar(\sigma_k D\varphi_k) = 
  \dbar\sigma_k D\varphi_k -\sigma_k \dbar (D\varphi_k) = 
  \dbar\sigma_k D\varphi_k - \sigma_k
  (\Theta_{k-1})_{(1,1)}\varphi_k + \sigma_k \varphi_k (\Theta_k)_{(1,1)},
\end{equation*}
and hence
\begin{equation}\label{eq:tralphaBetal}
  \ell\tr (\alpha_k \wedge \beta_k^{\ell-1}) = 
  (-1)^\ell \dbar\chi_\epsilon^\ell \wedge \tr \big(\sigma_k D\varphi_k
  \big(\dbar\sigma_k D\varphi_k\big) ^{\ell-1} \big) + 
  O_{Z,\ell,\epsilon},
\end{equation}
since all terms containing a factor $(\Theta_k)_{(1,1)}$ or $(\Theta_{k-1})_{(1,1)}$ also contain a factor $\dbar\chi_\epsilon^\ell$.
Note that $\alpha_0 = 0$ since $\varphi_0 = 0$. It thus follows from \eqref{eq:trThetakl}
and \eqref{eq:tralphaBetal} that
\begin{align*}
  p_{(\ell, \ell)}(E, \widehat D^\epsilon)
  &= \sum_{k = 0}^N (-1)^k
    \tr (\widehat\Theta_k^\epsilon)_{(1,1)}^\ell \\
  &= (-1)^\ell  \dbar\chi_\epsilon^\ell \wedge
    \sum_{k = 1}^N (-1)^k \tr \big(\sigma_k D\varphi_k
    \big(\dbar\sigma_k D\varphi_k\big) ^{\ell-1} \big) + 
    \sum_{k = 0}^N (-1)^k \tr \beta_k^\ell + O_{Z,\ell,\epsilon}.
\end{align*}

Thus, to prove \eqref{eq:pl} it suffices to show that 
$\sum_{k = 0}^N (-1)^k \tr \beta_k^\ell$ vanishes for $\ell\geq 1$.
Outside $Z$, let $\widetilde D$ be the connection on $(E, \varphi)$
defined by
\begin{equation}\label{bibliotek}
  \widetilde{D}_k := -\sigma_kD\varphi_k + D_k
\end{equation}
and let $\tilde c = c(E, \widetilde D)$ be the corresponding Chern
form defined by \eqref{syster}.  If follows from Lemma~\ref{lma:compatible} that $\widetilde D$ is compatible with 
$(E, \varphi)$ and thus by Lemma~\ref{lma:BBvanishing}, 
${\tilde c}_j$ vanishes for $j\geq 1$.  For $\ell\geq 1$, let 
$\tilde p_\ell := p_{(\ell,\ell)}(E,\widetilde D)$, where
$p_{(\ell,\ell)}$ is given by \eqref{eq:pl11}.  By
\eqref{eq:chernCharacterAlternativell} and \eqref{sommar}, 
$\tilde p_\ell$ is a polynomial in 
$\tilde c_{(1,1)},\dots,\tilde c_{(\ell,\ell)}$.  Since
$\tilde{c}_{(j,j)}$ vanishes for any $j \geq 1$, 
$\tilde p_\ell = 0$.  
Note that $\beta_k = \chi_\epsilon (\widetilde{\Theta}_k)_{(1,1)}$, where
$\widetilde{\Theta}_k$ is the curvature matrix corresponding to
$\widetilde D_k$.  Thus
\begin{equation*}
  \sum_{k = 0}^N (-1)^k \tr \beta_k^\ell
  = \chi_\epsilon^\ell \sum_{k = 0}^N (-1)^k \tr
  (\widetilde \Theta_k)_{(1,1)}^\ell
  = \chi_\epsilon^\ell \tilde{p}_\ell = 0
\end{equation*}
for $\epsilon >0$.  This concludes the proof of \eqref{eq:pl}.
\end{proof}

\begin{lemma}\label{klunga}
For $\ell\geq 1$ and $\epsilon >0$, we have
\begin{multline}
  \label{sjunga}
  \dbar \chi_\epsilon \wedge \sum_{k = 1}^N
  (-1)^{k}\tr \big(\sigma_k D\varphi_k (\dbar\sigma_k
  D\varphi_k)^{\ell-1} \big) = \\
  (-1)^\ell \sum_{k = 0}^{N-\ell} (-1)^k
  \dbar \chi_\epsilon \wedge \tr
  \big(\sigma_{k + \ell} \dbar\sigma_{k + \ell-1}
  \cdots \dbar\sigma_{k + 1} D\varphi_{k + 1}
  \cdots D\varphi_{k + \ell} \big) + O_{Z,\ell,\epsilon}
  + \dbar O_{Z,\ell,\epsilon}.
\end{multline}
If $\ell > N$, the sum on the right hand side should be interpreted as $0$.
\end{lemma}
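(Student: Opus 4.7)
The plan is to recast both sides of \eqref{sjunga} as (super)traces of operator-valued forms on the graded bundle $E = \bigoplus_k E_k$ and then to deduce the identity via a telescoping sum of super-commutators, whose errors are absorbed into $O_{Z,\ell,\epsilon} + \dbar O_{Z,\ell,\epsilon}$. Set $\sigma := \bigoplus_k \sigma_k$, $\varphi := \bigoplus_k \varphi_k$, $\dbar\sigma := \bigoplus_k \dbar\sigma_k$, and $D\varphi := \bigoplus_k D\varphi_k$ as $\End E$-valued forms outside $Z$, and write $\langle \alpha \rangle := \sum_k (-1)^k \tr(\alpha|_{E_k})$ for the supertrace. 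Only the diagonal components of $\sigma D\varphi$ and of $\dbar\sigma D\varphi$ are nonzero, so the LHS of \eqref{sjunga} equals $\dbar\chi_\epsilon \wedge \langle (\sigma D\varphi)(\dbar\sigma D\varphi)^{\ell-1}\rangle$. Likewise, since the nonzero component of $\sigma(\dbar\sigma)^{\ell-1}(D\varphi)^\ell$ acts on $E_{k+\ell}$ and thus contributes a sign $(-1)^{k+\ell}$ to $\langle\cdot\rangle$, the main term on the RHS equals $\dbar\chi_\epsilon \wedge \langle \sigma(\dbar\sigma)^{\ell-1}(D\varphi)^\ell\rangle$.

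Rewriting \eqref{kalkon} in operator form gives
$$D\varphi\cdot\dbar\sigma - \dbar\sigma\cdot D\varphi = \varphi\cdot D\dbar\sigma + D\dbar\sigma\cdot\varphi \quad \text{outside } Z,$$
so that $D\varphi$ and $\dbar\sigma$ commute modulo terms carrying a factor of $\varphi$. For $0\leq m\leq \ell-1$ introduce the interpolation $I_m := \sigma(\dbar\sigma)^m(D\varphi)^{m+1}(\dbar\sigma D\varphi)^{\ell-1-m}$, so that $I_0 = (\sigma D\varphi)(\dbar\sigma D\varphi)^{\ell-1}$ and $I_{\ell-1} = \sigma(\dbar\sigma)^{\ell-1}(D\varphi)^\ell$. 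Iterating the commutator identity above to move one $\dbar\sigma$ from the block $(\dbar\sigma D\varphi)^{\ell-1-m}$ past the adjacent $(D\varphi)^{m+1}$ yields
$$I_m - I_{m+1} = \sum_{j=0}^m \sigma(\dbar\sigma)^m (D\varphi)^j\bigl(\varphi \cdot D\dbar\sigma + D\dbar\sigma\cdot\varphi\bigr)(D\varphi)^{m-j}\cdot D\varphi\cdot (\dbar\sigma D\varphi)^{\ell-2-m},$$
and telescoping gives $\langle I_0\rangle - \langle I_{\ell-1}\rangle = \sum_{m=0}^{\ell-2}\langle I_m - I_{m+1}\rangle$, a sum of such error terms.

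The hard part is verifying that, after multiplication by $\dbar\chi_\epsilon$, each of these error terms lies in $O_{Z,\ell,\epsilon} + \dbar O_{Z,\ell,\epsilon}$. In each summand, the $\varphi$-factor can be shuffled next to the outer $\sigma$ using \eqref{exakt} in operator form ($\varphi\cdot D\varphi = D\varphi\cdot\varphi$), \eqref{dstreck} in operator form ($\varphi\cdot\dbar\sigma = \dbar\sigma\cdot\varphi$), and super-cyclicity of $\langle\cdot\rangle$. Then $\sigma\varphi = \Id - \varphi\sigma$ (from \eqref{identitet}) splits the term into an $\Id$-piece and a residual $\varphi\sigma$-piece. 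For the $\Id$-piece, the identities $D\dbar\sigma = D'\dbar\sigma$ and $\dbar(D\sigma) = \dbar D'\sigma$ agree modulo smooth curvature terms (controlled by \eqref{eq:dbarD} and the $(1,0)$-hypothesis on $D$), which lets $D\dbar\sigma$ be recognized as $\dbar$ of a smaller product plus smooth remainders vanishing where $\chi_\epsilon\equiv 1$; wedging with $\dbar\chi_\epsilon$ and using $\dbar\chi_\epsilon\wedge \dbar\eta = -\dbar(\dbar\chi_\epsilon\wedge\eta)$ puts the $\dbar$-exact part in $\dbar O_{Z,\ell,\epsilon}$ and the smooth remainder in $O_{Z,\ell,\epsilon}$. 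The $\varphi\sigma$-piece retains an outer $\varphi\sigma$ factor on which the same procedure applies; the iteration terminates in finitely many steps because $\sigma\cdot\sigma = 0$. The principal bookkeeping challenge is tracking the supertrace signs through every cyclic shift and confirming that the extracted $\dbar$-exact pieces have $\dbar$-degree strictly less than $\ell$, so that they genuinely land in $\dbar O_{Z,\ell,\epsilon}$.
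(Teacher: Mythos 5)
Your setup and overall strategy follow the paper's quite closely: you use \eqref{kalkon} as the commutation rule for moving $\dbar\sigma$'s past $D\varphi$'s, shuffle the resulting $\varphi$-factors with \eqref{exakt} and \eqref{dstreck}, split off an identity via \eqref{identitet}, and recognize the identity piece as a $\dbar$-exact term plus smooth curvature corrections controlled by \eqref{eq:dbarD} --- this is exactly the paper's treatment of its term $\xi$. The supertrace packaging of the alternating sum over $k$ is a clean reformulation, and your identification of both sides of \eqref{sjunga} as supertraces (including the sign $(-1)^{k+\ell}$ on the right) is correct.

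The gap is in your claim that, after this processing, \emph{every} error term lands in $O_{Z,\ell,\epsilon}+\dbar O_{Z,\ell,\epsilon}$, with the residual $\varphi\sigma$-pieces disposed of by iterating the same procedure until $\sigma\cdot\sigma=0$ terminates it. This cannot work. Those residual pieces are precisely the paper's terms $\rho_{k,m}^{r,s}$: each carries one factor $D\dbar\sigma$ together with a full complement of $\dbar\sigma$'s, so after wedging with $\dbar\chi_\epsilon$ it has antiholomorphic degree exactly $m+r+s+2=\ell$, with none of that degree coming from a smooth $\epsilon$-independent factor; by the very definition of $O_{Z,\ell,\epsilon}$ it cannot belong to that space, and for $\ell=p=\codim Z$ (the case of interest) its limit is a genuine residue-type current that the dimension principle does not kill. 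Moreover $\sigma_{k+1}\sigma_k=0$ never enters their reduction --- the shuffling via \eqref{dstreck} and \eqref{dexakt} just transports the $\varphi$ around the trace, it does not create adjacent $\sigma$'s. What actually happens in the paper is that the two halves of the anticommutator $\varphi\,D\dbar\sigma+D\dbar\sigma\,\varphi$ produce the \emph{same} expression $\rho_{\cdot,m}^{r,s}$ at the adjacent indices $k-1$ and $k$, so that in the alternating sum one gets $\sum_k(-1)^k\big(\rho_{k-1,m}^{r,s}+\rho_{k,m}^{r,s}\big)=-\rho_{0,m}^{r,s}+(-1)^{N-m}\rho_{N-m,m}^{r,s}$, and these boundary terms vanish only because the complex is bounded ($\varphi_j$ and $\sigma_j$ are interpreted as $0$ for $j\leq 0$ or $j>N$). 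This telescoping cancellation and the verification of the boundary vanishing --- the content of the paper's identities \eqref{unge}, \eqref{algebra} and \eqref{torskabotten} --- is the crux of the proof and is missing from your argument; as written, you discard a nonzero term of top antiholomorphic degree.
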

Here $\dbar O_{Z,\ell,\epsilon}$ means forms of the form $\dbar
\psi_\epsilon$, where $\psi_\epsilon\in O_{Z,\ell,\epsilon}$.

\begin{proof}
    For $\ell = 1$ the sums differ only by a shift in the indices, so we may assume $\ell \geq 2$.
    For fixed $k\in \Z$ and $m,r,s \geq 0$, let
    \begin{equation*}
        \rho_{k,m}^{r,s} = \dbar \chi_\epsilon \wedge \tr
        \big(\sigma_{k + m + 1} \dbar \sigma_{k + m}\cdots \dbar\sigma_{k + 1}
        (\dbar \sigma_{k} D\varphi_{k})^r
        D\dbar \sigma_{k}
        (D\varphi_{k}\dbar\sigma_{k})^{s}
        D \varphi_{k} \cdots D\varphi_{k + m} \varphi_{k + m + 1} \big).
    \end{equation*}
    If $m = 0$, then the factor
    $\dbar\sigma_{k + m}\cdots\dbar\sigma_{k + 1}$ should be
    interpreted as $1$. Moreover since $(E,\varphi)$ starts at level
    $0$ and ends at level $N$, we interpret $\varphi_j$ and $\sigma_j$ as $0$ if
   $j>N$ or $j<1$, and consequently we interpret
    $\rho_{k,m}^{r,s}$ as $0$ if $k + m\geq N$ or $k\leq 0$.

    We claim that
    \begin{multline}
        \label{unge}
        \dbar \chi_\epsilon \wedge \tr \big(\sigma_{k + m} \dbar \sigma_{k + m-1}\cdots \dbar\sigma_k
        (\dbar \sigma_{k-1} D\varphi_{k-1})^r (D\varphi_{k}\dbar\sigma_k)^{s + 1}
        D\varphi_k\cdots D\varphi_{k + m} \big)
       = \\
        \dbar \chi_\epsilon \wedge \tr \big(\sigma_{k + m} \dbar \sigma_{k + m-1}\cdots \dbar\sigma_k
        (\dbar \sigma_{k-1} D\varphi_{k-1})^{r + 1} (D\varphi_{k}\dbar\sigma_k)^{s}
        D\varphi_k\cdots D\varphi_{k + m} \big) \\ + \rho_{k-1,m}^{r,s} + \rho_{k,m}^{r,s} + O_{Z,r + s + m + 2,\epsilon} + \dbar O_{Z,r + s + m + 2,\epsilon}.
    \end{multline}

    Let us take \eqref{unge} for granted and let
    $\rho_{k,m}^t = \sum_{r = 0}^t\rho_{k,m}^{r,t-r}$. Then by inductively
    applying \eqref{unge} to $r = 0,\ldots, t$ with $s = t-r$, we get
     \begin{multline}
        \label{algebra}
        \dbar \chi_\epsilon \wedge \tr \big(
        \sigma_{k + m} \dbar \sigma_{k + m-1}\cdots \dbar \sigma_k
        (D\varphi_{k}\dbar\sigma_k)^{t + 1}
        D\varphi_k\cdots D\varphi_{k + m} \big)
       = \\
        \dbar \chi_\epsilon \wedge \tr \big(
        \sigma_{k + m} \dbar\sigma_{k + m-1}\cdots \dbar \sigma_{k-1}
        (D\varphi_{k-1}\dbar\sigma_{k-1})^{t}
        D\varphi_{k-1}\cdots D\varphi_{k + m}\big)
        \\ + \rho_{k-1,m}^{t} + \rho_{k,m}^{t} + O_{Z,t + m + 2,\epsilon} + \dbar  O_{Z,t + m + 2,\epsilon}.
    \end{multline}
    It follows that, for fixed $m$ and $t$,
    \begin{multline*}
        \sum_{k = 1}^{N-m}
        (-1)^k
        \dbar \chi_\epsilon \wedge \tr \big(
        \sigma_{k + m} \dbar \sigma_{k + m-1}\cdots \dbar \sigma_k
        (D\varphi_{k}\dbar\sigma_k)^{t + 1}
        D\varphi_k\cdots D\varphi_{k + m} \big)
        = \\
        \sum_{k = 1}^{N-m}
        (-1)^k
        \dbar \chi_\epsilon \wedge \tr \big(
        \sigma_{k + m} \dbar\sigma_{k + m-1}\cdots \dbar \sigma_{k-1}
        (D\varphi_{k-1}\dbar\sigma_{k-1})^{t}
        D\varphi_{k-1}\cdots D\varphi_{k + m}\big)
        \\ - \rho_{0,m}^{t} + (-1)^{N-m} \rho_{N-m,m}^{t} + O_{Z,t + m + 2,\epsilon} + \dbar O_{Z,t + m + 2,\epsilon}.
    \end{multline*}
    Thus, since $\rho_{0,m}^{r,s}$ and $\rho_{N-m, m}^{r,s}$ vanish,
    \begin{multline}\label{torskabotten}
        \sum_{k = 1}^{N-m} (-1)^k
        \dbar \chi_\epsilon \wedge \tr \big(
        \sigma_{k + m} \dbar \sigma_{k + m-1}\cdots \dbar \sigma_k
        (D\varphi_{k}\dbar\sigma_k)^{t + 1}
        D\varphi_k\cdots D\varphi_{k + m} \big)
        = \\
       -\sum_{k = 1}^{N-m-1} (-1)^k
        \dbar \chi_\epsilon \wedge \tr \big(
        \sigma_{k + m + 1} \dbar \sigma_{k + m}\cdots \dbar \sigma_k
        (D\varphi_{k}\dbar\sigma_k)^{t}
        D\varphi_k\cdots D\varphi_{k + m + 1} \big) \\ + O_{Z,t + m + 2, \epsilon} + \dbar  O_{Z,t + m + 2,\epsilon}.
      \end{multline}
Assume that $2\leq \ell\leq N$. By inductively applying
\eqref{torskabotten} to $m = 0, \ldots, \ell-2$ with $t = \ell-2-m$,
we get
\begin{align*}
  \sum_{k = 1}^{N}
  (-1)^k &\dbar \chi_\epsilon \wedge
  \tr \big(\sigma_k (D\varphi_k \dbar\sigma_k)^{\ell-1}
  D\varphi_k \big) \\
  &= -\sum_{k = 1}^{N-1}
    (-1)^k \dbar \chi_\epsilon \wedge \tr
    \big(\sigma_{k + 1} \dbar\sigma_k
    (D\varphi_k \dbar\sigma_k)^{\ell-2}
    D\varphi_k D\varphi_{k + 1} \big) + O_{Z,\ell,\epsilon}
    + \dbar O_{Z,\ell,\epsilon} \\
  &= \cdots \\
  &= (-1)^{\ell-1}\sum_{k = 1}^{N-\ell + 1}
    (-1)^k \dbar \chi_\epsilon \wedge \tr
    \big(\sigma_{k + \ell-1} \dbar\sigma_{k + \ell-2}
    \cdots \dbar\sigma_k D\varphi_k \cdots D\varphi_{k + \ell-1} \big)
    + O_{Z,\ell,\epsilon} + \dbar O_{Z,\ell,\epsilon},
\end{align*}
which after a shift in indices is exactly \eqref{sjunga}.  If $\ell>N$
and we perform the same induction, after $N-1$ steps we end up with
\[
  (-1)^N
  \dbar \chi_\epsilon \wedge \tr
  \big(\sigma_{N} \dbar\sigma_{N-1}\cdots \dbar\sigma_1  (D\varphi_1 \dbar\sigma_1)^{\ell-N}
  D\varphi_1\cdots D\varphi_{N} \big) + O_{Z,\ell,\epsilon} + \dbar O_{Z,\ell,\epsilon},
\]
which by \eqref{algebra} equals $\rho_{0,
  N-1}^{\ell-N-1} + \rho_{1,N-1}^{\ell-N-1} + O_{Z,\ell,\epsilon} = 
O_{Z,\ell,\epsilon}$; thus \eqref{sjunga} holds also in this case.

\smallskip

    It remains to prove \eqref{unge}. To do this
    let us replace the first factor
    $D\varphi_{k}\dbar \sigma_{k}$ in the left hand side of \eqref{unge} by
    the right hand side of \eqref{kalkon}; we then get three terms. The term
    corresponding to the second term $\dbar\sigma_{k-1}D\varphi_{k-1}$ in \eqref{kalkon} is
    precisely the first term in the right hand side of \eqref{unge}.
    Next, by \eqref{exakt} and
    \eqref{dstreck},
    \[
        \varphi_{k-1} D\varphi_k \dbar \sigma_k = 
        D \varphi_{k-1} \varphi_k \dbar \sigma_k = D\varphi_{k-1} \dbar
        \sigma_{k-1} \varphi_{k-1}.
    \]
    Applying this repeatedly we get
    \[
        \varphi_{k-1} (D\varphi_k \dbar \sigma_k)^s = (D\varphi_{k-1} \dbar
        \sigma_{k-1})^s \varphi_{k-1}.
    \]
    Using this and \eqref{exakt} (to ``move'' the $D$), we get
    \begin{multline*}
        \sigma_{k + m} \dbar \sigma_{k + m-1}\cdots \dbar\sigma_k
        (\dbar \sigma_{k-1} D\varphi_{k-1})^r
        D\dbar \sigma_{k-1} \varphi_{k-1}
        (D\varphi_{k}\dbar\sigma_k)^{s}
        D\varphi_k\cdots D\varphi_{k + m}
       = \\
        \sigma_{k + m} \dbar \sigma_{k + m-1}\cdots \dbar\sigma_k
        (\dbar \sigma_{k-1} D\varphi_{k-1})^r
        D\dbar \sigma_{k-1}
        (D\varphi_{k-1}\dbar\sigma_{k-1})^{s}
        D \varphi_{k-1} \cdots D\varphi_{k + m-1} \varphi_{k + m}.
    \end{multline*}
    It follows that the term corresponding to the first term in \eqref{kalkon}
    equals $\rho_{k-1,m}^{r,s}$.

    Finally we consider the term corresponding to the last term in
    \eqref{kalkon}.
    As above, using \eqref{exakt} and \eqref{dstreck}, we get that
    \[
        (\dbar \sigma_{k-1} D\varphi_{k-1})^r \varphi_k = 
        \varphi_k (\dbar \sigma_{k} D\varphi_{k})^r
    \]
    and thus, using this and \eqref{dexakt} (to ``move'' the $\dbar$),
    \begin{multline}\label{tulpan}
        \sigma_{k + m} \dbar \sigma_{k + m-1}\cdots \dbar\sigma_k
        (\dbar \sigma_{k-1} D\varphi_{k-1})^r
        \varphi_k D\dbar \sigma_{k}
        (D\varphi_{k}\dbar\sigma_k)^{s}
        D\varphi_k\cdots D\varphi_{k + m}
        = \\
        \dbar \sigma_{k + m} \cdots \dbar\sigma_{k + 1} \sigma_k
        \varphi_k(\dbar \sigma_{k} D\varphi_{k})^r
        D\dbar \sigma_{k}
        (D\varphi_{k}\dbar\sigma_k)^{s}
        D\varphi_k\cdots D\varphi_{k + m}.
    \end{multline}
    In view of \eqref{identitet} we can replace the factor $\sigma_k \varphi_k$ by
    $\Id_{E_k}-\varphi_{k + 1}\sigma_{k + 1}$:
    \begin{multline} \label{tulpan2}
        \dbar \sigma_{k + m} \cdots \dbar\sigma_{k + 1} \sigma_k
        \varphi_k(\dbar \sigma_{k} D\varphi_{k})^r
        D\dbar \sigma_{k}
        (D\varphi_{k}\dbar\sigma_k)^{s}
        D\varphi_k\cdots D\varphi_{k + m} = \\
        \dbar \sigma_{k + m} \cdots \dbar\sigma_{k + 1} (\dbar \sigma_{k} D\varphi_{k})^r
        D\dbar \sigma_{k}
        (D\varphi_{k}\dbar\sigma_k)^{s}
        D\varphi_k\cdots D\varphi_{k + m} + \\
        (-\dbar \sigma_{k + m} \cdots \dbar\sigma_{k + 1} \varphi_{k + 1}\sigma_{k + 1}
        (\dbar \sigma_{k} D\varphi_{k})^r
        D\dbar \sigma_{k}
        (D\varphi_{k}\dbar\sigma_k)^{s}
        D\varphi_k\cdots D\varphi_{k + m}) = : \xi + \delta.
    \end{multline}
  By repeatedly using \eqref{dstreck} we get that
    \[
        \dbar \sigma_{k + m} \cdots \dbar\sigma_{k + 1}
        \varphi_{k + 1}\sigma_{k + 1} = 
        \dbar \sigma_{k + m} \cdots \dbar\sigma_{k + 2}
        \varphi_{k + 2}\dbar \sigma_{k + 2}\sigma_{k + 1} = \dots = 
        \varphi_{k + m + 1} \dbar \sigma_{k + m + 1} \cdots \dbar\sigma_{k + 2}
        \sigma_{k + 1}.
    \]
    It follows, using \eqref{dexakt}, that $\delta$ in \eqref{tulpan2} equals
    \[
      \delta = - \varphi_{k + m + 1} \sigma_{k + m + 1} \dbar \sigma_{k + m}
      \cdots \dbar\sigma_{k + 1} (\dbar \sigma_{k} D\varphi_{k})^r
        D\dbar \sigma_{k}
        (D\varphi_{k}\dbar \sigma_{k})^{s}
        D\varphi_k\cdots D\varphi_{k + m} = : -\varphi_{k + m + 1}\beta.
    \]
    Note that $\deg \beta = 4\kappa + 2$ and $\deg_e \beta = 2\kappa + 1$, where
    $\kappa = m + r + s + 1$, and since  
\[
  \deg \varphi_{k + m + 1} = \deg_e \varphi_{k + m + 1} = 1,
\]
    we get, in view of \eqref{spar}, that
    $\tr (\varphi_{k + m + 1}\beta) = - \tr (\beta\varphi_{k + m + 1})$ and
    it follows that this terms equals
   $\rho_{k,m}^{r,s}$.

It remains to consider $\xi$ in \eqref{tulpan2}.
Let
\begin{equation*}
        \eta:= \sigma_{k + m} \dbar \sigma_{k + m-1}\cdots \dbar\sigma_{k + 1} (\dbar \sigma_{k} D\varphi_{k})^r
        D\dbar \sigma_{k}
        (D\varphi_{k}\dbar \sigma_{k})^{s}
        D\varphi_k\cdots D\varphi_{k + m}
      \end{equation*}
Then, by \eqref{eq:dbarLeibniz},
$\dbar \eta = \xi + \xi'$, where $\xi'$ consists of a sum of terms with a factor
$\dbar D \varphi_j$ or $\dbar D\dbar \sigma_j$.
Let $q = m + r + s + 2$. Then, note that $\dbar \chi_\epsilon  \wedge \eta$ is in
$O_{Z,q,\epsilon}$, and thus
$\dbar \chi_\epsilon \wedge \dbar\eta = -\dbar (\dbar \chi_\epsilon
\wedge \eta) \in \dbar O_{Z,q,\epsilon}$.
Moreover, by \eqref{eq:dbarD}, each term in $\xi'$ has a factor that is a smooth
$(1,1)$-form. Therefore $\dbar \chi_\epsilon \wedge \xi'\in
O_{Z,q,\epsilon}$, and hence $\tr \dbar\chi_\epsilon\wedge \xi = - \tr(\dbar\chi_\epsilon\wedge \xi') + \tr(\dbar\chi_\epsilon \wedge \dbar\eta)  \in O_{Z, q,
  \epsilon} + \dbar O_{Z,q,\epsilon}$. This concludes the proof of \eqref{unge}.
\end{proof}

\begin{proof}[Proof of Theorem ~\ref{saltkar}]
We first prove \eqref{glashus2}. Since $Z$ has codimension $p$ and
$\chi^p \sim \chi_{[1,\infty)}$, by Lemmas~\ref{lma:OdimensionPrinciple}, \ref{struntsak}, and~\ref{klunga},
and by~\eqref{eq:Rdef}, we have
\begin{align*}
  \ch_p^{\Res}(E,D)
  & = \lim_{\epsilon \to 0}  \ch_p(E,\widehat D^\epsilon) \\
  & = \frac{1}{(2\pi i)^p p!}  \lim_{\epsilon \to
    0} \dbar\chi_\epsilon^p \wedge \sum_{k = 1}^N (-1)^k
    \tr \big(\sigma_k D\varphi_k(\dbar\sigma_k
    D\varphi_k)^{p-1}\big) \\
  & = \frac{(-1)^p}{(2\pi i)^p p!} \sum_{k = 0}^{N-p}
    (-1)^k \lim_{\epsilon \to 0} \dbar \chi_\epsilon^p \wedge \tr
    \big(\sigma_{k + p} \dbar\sigma_{k + p-1} \cdots \dbar\sigma_{k + 1}
    D\varphi_{k + 1} \cdots D\varphi_{k + p} \big) \\
  & = \frac{(-1)^p}{(2\pi i)^p p!} \sum_{k = 0}^{N-p}
    (-1)^k \tr\big(
    R^k_{k + p} D\varphi_{k + 1} \cdots D\varphi_{k + p}
    \big).
\end{align*}
Since $D\varphi_{k + 1}\cdots D\varphi_{k + p}$ and $R^k_{k + p}$ both have
total degree $2p$ 
and endomorphism degree $p$, it follows from \eqref{spar} that
\begin{equation*}
  \tr \big(R^{k}_{k + p}D\varphi_{k + 1} \cdots D\varphi_{k + p} \big)
  = (-1)^{p} \tr \big(D\varphi_{k + 1} \cdots D\varphi_{k + p} R^{k}_{k + p}
  \big),
\end{equation*}
and thus \eqref{glashus2} follows.

Next, by Theorem \ref{thm:existence2} and Remark \ref{locket},
$\ch_\ell^{\Res}(E, D)$ is a pseudomeromorphic current with support on $Z$
and with components of bidegree $(\ell + q, \ell-q)$ where $q\geq
0$. Therefore it vanishes by the dimension principle when $\ell<p$,
see Section \ref{hemma}. This proves \eqref{glashus3}.

It remains to prove \eqref{glashus4}. From the beginning of the proof
of Lemma~\ref{struntsak} and \eqref{eq:Oideal} it follows that
\begin{equation}
  \label{attack}
  \ch_{\ell_1}(E,\widehat{D}^\epsilon) \wedge
  \dots \wedge \ch_{\ell_m}(E,\widehat{D}^\epsilon)
 = C {p}_{(\ell_1, \ell_1)}(E, \widehat D^\epsilon)
  \wedge \cdots \wedge
  {p}_{(\ell_m, \ell_m)}(E, \widehat D^\epsilon) + O_{Z,\ell_1 + \cdots
    + \ell_m,\epsilon},
\end{equation}
for some appropriate constant $C$.
By \eqref{eq:pl}, the fact that $(\dbar \chi_\epsilon)^2 = 0$, and
\eqref{eq:Oideal}, it follows that
\[
  {p}_{(\ell_1, \ell_1)}(E, \widehat D^\epsilon) \wedge \cdots \wedge
    {p}_{(\ell_m, \ell_m)}(E, \widehat D^\epsilon)\in
    O_{Z,p,\epsilon} 
\]
if $m \geq 2$ and $\ell_1 + \cdots + \ell_m\leq p$.  Thus the limit of
\eqref{attack} vanishes in this case, which proves \eqref{glashus4}.
\end{proof}

Assume that $(E,\varphi)$ is a complex of Hermitian vector bundles of the form
    \eqref{turkos} such that $\mathcal{H}_k(E)$ has pure codimension
    $p$ or vanishes for $k = 0,\dots,N$, and let $Z = \cup \supp
    \mathcal{H}_k(E)$. Then
        $(E,\varphi)$ is pointwise exact outside $Z$, which has codimension
        $p$. Now, by combining Theorem~\ref{thm:LW3} and Theorem~\ref{thm:mainGeneral},
we obtain the following generalization of Theorem~\ref{cykla}.
\begin{corollary} \label{lydia}
    Assume that $(E,\varphi)$ is a complex of Hermitian vector bundles of the form
    \eqref{turkos} such that $\mathcal{H}_k(E)$ has pure codimension
    $p$ or vanishes for $k = 0,\dots,N$. Moreover, assume that $D = (D_k)$ is a
    $(1,0)$-connection on $(E, \varphi)$.
    Then
    \begin{equation*}
        c_p^{\Res}(E, D) = (-1)^{p-1}(p-1)![E].
      \end{equation*}
      Moreover \eqref{hajka} and \eqref{bajka}
      hold.
  \end{corollary}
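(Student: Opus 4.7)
The plan is to simply combine the two deep ingredients referenced in the preceding paragraph, and then check that the combinatorial constants match. First, I would verify the hypothesis of Theorem~\ref{thm:mainGeneral}: setting $Z = \bigcup_k \supp \mathcal{H}_k(E)$, the complex $(E,\varphi)$ is pointwise exact outside $Z$, and since each non-vanishing $\mathcal{H}_k(E)$ has pure codimension $p$, the variety $Z$ has codimension exactly $p$. So Theorem~\ref{thm:mainGeneral} applies verbatim and immediately yields \eqref{hajka} and \eqref{bajka} (which in the notation there are \eqref{sommardag} and \eqref{badhus}), together with the explicit residue formula
\begin{equation*}
   c_p^{\Res}(E,D) = \frac{(-1)^{p-1}}{(2\pi i)^p p} \sum_{k=0}^{N-p} (-1)^k \tr\bigl(D\varphi_{k+1}\cdots D\varphi_{k+p}\, R^{k}_{k+p}\bigr).
\end{equation*}

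Next, I would invoke Theorem~\ref{thm:LW3}, whose hypotheses coincide exactly with ours, to identify
\begin{equation*}
   \frac{1}{(2\pi i)^p p!} \sum_{k=0}^{N-p} (-1)^k \tr\bigl(D\varphi_{k+1}\cdots D\varphi_{k+p}\, R^{k}_{k+p}\bigr) = [E].
\end{equation*}
Multiplying this identity by $(-1)^{p-1}(p-1)!$ and using the obvious $(p-1)!/p! = 1/p$, the right-hand side of the first display rearranges precisely to $(-1)^{p-1}(p-1)![E]$, giving the desired equality.

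There is essentially no obstacle here, since the heavy lifting has already been done: Theorem~\ref{thm:mainGeneral} provides the residue-current representation of $c_p^{\Res}(E,D)$ along with the low-degree vanishing, and the generalized Poincaré--Lelong formula of Theorem~\ref{thm:LW3} turns that residue expression into the cycle $[E]$. The only thing to track carefully is the numerical factor $(-1)^{p-1}(p-1)!$, which arises transparently from matching $1/p$ against $1/p!$. The vanishing statements \eqref{hajka} and \eqref{bajka} require no additional work beyond quoting \eqref{sommardag} and \eqref{badhus} of Theorem~\ref{thm:mainGeneral}.
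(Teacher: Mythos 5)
Your proposal is correct and follows exactly the paper's argument: the paper likewise observes that $(E,\varphi)$ is pointwise exact outside $Z=\bigcup_k\supp\mathcal{H}_k(E)$, which has codimension $p$, and then combines Theorem~\ref{thm:mainGeneral} with Theorem~\ref{thm:LW3}, the constant $(-1)^{p-1}(p-1)!$ arising from $p!/p=(p-1)!$ just as you describe.
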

  Here $[E]$ is the cycle of $(E, \varphi)$ defined by \eqref{cycleofcomplex}.
In particular, it follows from Corollary \ref{lydia} that $c_p^{\Res}(E, D)$ is independent of the
choice of Hermitian metric and $(1,0)$-connection $D$ on $(E,
\varphi)$.

By equipping a complex of vector bundles $(E, \varphi)$ with Hermitian
metrics and $(1,0)$-connections and taking cohomology we get the
following generalization of \eqref{janakippo}.
\begin{corollary} \label{cor:chernFundCycleGeneral}
  Assume that $(E,\varphi)$ is a complex of vector bundles of the form
\eqref{turkos} such that $\mathcal{H}_k(E)$ has pure codimension $p$
or vanishes for $k = 0,\dots,N$.  Then
\[
  \big[(-1)^{p-1}(p-1)![E]\big] = c_p(E).
\]
\end{corollary}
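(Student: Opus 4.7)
The plan is to deduce Corollary~\ref{cor:chernFundCycleGeneral} as an immediate cohomological consequence of Corollary~\ref{lydia}, by introducing auxiliary Hermitian structure and then passing to de Rham cohomology classes. The hypotheses on $(E,\varphi)$ in the corollary are purely cohomological (pure codimension $p$ of the homology sheaves, or vanishing), so no metric data is assumed; we therefore begin by introducing such data.

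First, I would equip each $E_k$ with a Hermitian metric (which exists by a standard partition of unity argument) and with a $(1,0)$-connection $D_k$, for instance the Chern connection of the chosen metric. This produces a $(1,0)$-connection $D = (D_k)$ on the complex $(E,\varphi)$, and places us exactly in the setting of Corollary~\ref{lydia}. That corollary then yields the current identity
\begin{equation*}
  c_p^{\Res}(E, D) = (-1)^{p-1}(p-1)![E].
\end{equation*}

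Next, I would invoke Theorem~\ref{thm:existence2}, which asserts that $c_p^{\Res}(E, D)$ is a closed current representing the de Rham cohomology class $c_p(E)$. Taking de Rham cohomology classes on both sides of the displayed equality gives precisely
\begin{equation*}
  \big[(-1)^{p-1}(p-1)![E]\big] = c_p(E),
\end{equation*}
which is the desired statement. In particular, $[E]$ is a closed current (as a locally finite sum of cycles of integration along pure-dimensional subvarieties of codimension $p$), so the left-hand side is a well-defined de Rham class.

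There is no substantive obstacle: the corollary is essentially the cohomological shadow of Corollary~\ref{lydia}. The only point worth highlighting is that although the intermediate current identity depends on the choice of metric and $(1,0)$-connection, both $c_p(E)$ and the cohomology class of $[E]$ are independent of these choices, so the cohomological identity is canonical.
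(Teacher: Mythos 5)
Your proposal is correct and follows essentially the same route as the paper: the authors likewise obtain the corollary by equipping $(E,\varphi)$ with Hermitian metrics and $(1,0)$-connections, applying Corollary~\ref{lydia}, and passing to de Rham cohomology via the fact that $c_p^{\Res}(E,D)$ represents $c_p(E)$ (Theorem~\ref{thm:existence2}).
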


\section{An example}\label{triumf}

We will compute (products of) Chern currents
$c^{\Res}(E,D)$ for an explicit choice of $(E, \varphi)$ and $D$.
Let $\mathcal{J} \subseteq \Ok_{\PP^2_{[t,x,y]}}$ be defined by
$\mathcal{J} = \mathcal{J}(y^k,x^\ell y^m)$, where $m < k$,
and let $\Ok_Z := \Ok_{\PP^2}/\mathcal{J}$.
Then $Z$ has pure dimension $1$, since $Z_\text{red} = \{y = 0\}$,
which is irreducible. However, note that $\mathcal J$
has an embedded prime $\mathcal J(x,y)$ of dimension
$0$.
Now $\mathcal F = \Ok_Z$ has a locally free resolution of the
form
\begin{equation}\label{pellefant}
    0 \to \Ok(-k-\ell) \xrightarrow[]{\varphi_2}
    \Ok(-k)\oplus\Ok(-\ell-m) \xrightarrow[]{\varphi_1} \Ok_{\PP^2}\to
    \mathcal F\to 0,
\end{equation}
where in the trivialization in the coordinate chart $\C^2 = 
\C^2_{(x,y)}$
\begin{equation}
  \label{elefant}
  \varphi_2 = \left[ \begin{array}{c} -x^\ell \\ y^{k-m} \end{array} \right] \text { and }
        \varphi_1 = \left[ \begin{array}{cc} y^k & x^\ell
            y^m \end{array} \right].
\end{equation}

Let us start by computing the Chern class of $\mathcal F$.
Let $\omega$ denote $c_1(\Ok(1))$.
Then
\begin{align*}
  c(E_0) & = c(\Ok_{\PP^2}) = 1 \\
  c(E_1) & = c\big(\Ok(-k)\oplus\Ok(-\ell-m) \big) = 
  1 - (k + \ell + m)\omega + k(\ell + m)\omega^2\\
  c(E_2) & = c\big(\Ok(-k-\ell) \big) = 1-(k + \ell)\omega,
\end{align*}
  see \emph{e.g.} \cite[Chapter~3]{Fulton}.
Moreover,
\begin{equation}
  c(E_1)^{-1} = 1-c_1(E_1) + c_1(E_1)^2-c_2(E_1);
\end{equation}
here the sum ends in degree $2$, since we are in dimension $2$.
Thus, by \eqref{eq:cClassSheafDef},
\begin{align*}
  c(\mathcal F) = c(E_0)c(E_1)^{-1}c(E_2)
  & = 1 - c_1(E_1) + c_1(E_2) + c_1(E_1)^2 - c_2(E_1) -
  c_1(E_1)c_1(E_2) \\
  & = 1 + m\omega + \big(m^2 + \ell (m-k)\big) \omega^2.
\end{align*}
In particular,
\begin{equation}\label{faren}
  c_1(\mathcal F) = m\omega
  \quad \text{ and } \quad
  c_2(\mathcal F) = \big(m^2 + \ell (m-k)\big) \omega^2.
\end{equation}

\subsection{Chern currents}\label{fron}

Assume that each $E_k$ in \eqref{pellefant}
is equipped
with the metric induced by the standard metric on
$\mathcal O(1)\to \PP^2$, in turn induced by the standard metric on
$\C^{3}$,
and let $D_k$ the corresponding Chern connection.
Let $D = (D_k)$, let $\chi\sim\chi_{[1,\infty)}$, and let $\chi_\epsilon$
and $\widehat D^\epsilon$ be as in Section \ref{taget}.
By Theorem \ref{cykla},
\begin{equation*}
  c_1^{\Res}(E, D) = [\mathcal F] = [Z] = m[y = 0],
\end{equation*}
which clearly is a representative of $c_1(\mathcal F)$, see
\eqref{faren}, with support on $\supp \mathcal F = Z_{\text{red}} = \{y = 0\}$.
We want to compute $c^{\Res}_2(E, D)$ and $(c^{\Res}_1)^2(E,D)$.
Note that these currents are not covered by Theorem \ref{thm:main}, since
$p = 1$ in this case.

Let $\hat p_\ell = p_{(\ell,\ell)}(E,\widehat D^\epsilon)$, where
$p_{(\ell,\ell)}$ is given by \eqref{eq:pl11}.
For degree reasons, $\ch_2(E,\widehat D^\epsilon) = \ch_{(2,2)}(E,\widehat D^\epsilon)$ and
$\ch_1^2(E,\widehat D^\epsilon) = \ch_{(1,1)}^2(E,\widehat D^\epsilon)$,
\emph{cf.}\ Remark~\ref{locket}.
It follows in view of \eqref{eq:ch12} and
\eqref{eq:chernCharacterAlternativell} that
\begin{equation} \label{begynnelse}
    c_2(E,\widehat D^\epsilon) = 
    \left(\frac{i}{2\pi}\right)^2\frac{1}{2} (\hat p_1^2-\hat p_2)
    \quad \text{ and } \quad
        c_1^2(E,\widehat D^\epsilon) = \left(\frac{i}{2\pi}\right)^2 \hat p_1^2.
\end{equation}
Thus, to compute $c_2^{\Res}(E,D)$ and $(c_1^{\Res})^2(E,D)$, it suffices to calculate the limits of $\hat p_1^2$ and $\hat p_2$ as $\epsilon \to 0$.

\smallskip

Note first that only two of the standard coordinate charts of $\PP^2$ intersect $Z$. In $\C^2_{(t,y)}$,
we have that $\varphi_1 = y^m \left[ \begin{array}{cc} y^{k-m} &  1 \end{array} \right]$,
so $\sigma_1 = (1/y^m)\sigma_1'$, where $\sigma_1'$ is smooth.
By using \eqref{trubbel} one can check that the limits of $\hat p_1^2$ and $\hat p_2$
put no mass at $\{t = y = 0\}$. Thus
it is enough to compute the limits in the coordinate chart $\C^2_{(x,y)}$
where $\varphi_j$ are given by \eqref{elefant}.
Note that $\varphi_1 = y^m \varphi_1'$, where $\varphi_1' = [y^{k-m} ~~~
x^\ell]$ has rank $1$ outside of the origin.
Then $\sigma_1 = (1/y^m)\sigma_1'$, where $\sigma_1'$ is smooth outside the origin, and
\begin{equation} \label{eq:sigmaphiprim}
  \sigma_1'\varphi_1'|_{\{y = 0\}} = \left[\begin{array}{cc} 0 & 0 \\ 0
                                                             &
                                                               1 \end{array}\right]
\end{equation}
when $x\neq 0$ and $\varphi_1'\sigma_1' = 1$ outside the origin. Also note that $\sigma_2$ is smooth outside the origin,
since $\varphi_2$ has constant rank there.
Let $O'_{Z,\ell,\epsilon}$ be defined as in the
beginning of Section~\ref{chernsec} but with $\sigma_1$ replaced by
$\sigma_1'$, and let $O_\epsilon = O'_{Z,2,\epsilon}$. Then
$\psi_\epsilon\in O_\epsilon$ is smooth outside the origin and, by arguments as in the proof of Lemma~\ref{lma:OdimensionPrinciple}, $\lim_{\epsilon\to 0}\psi_\epsilon = 0$.

Next, let $\hat\omega = (2\pi /i) \omega$, where $\omega$ now denotes the
Fubini-Study form. Then
\begin{equation}\label{skriket}
  \Theta_1 = \left[
    \begin{array}{cc} -k
      & 0 \\ 0 & -(\ell + m)
    \end{array}
  \right]\hat\omega,
  \quad \Theta_2 = -(k + \ell)\hat\omega.
\end{equation}
In particular, $\Theta_k$ is of bidegree $(1,1)$.  Let 
$\widetilde D = (\widetilde D_k)$ be the connection on 
$\PP^2\setminus Z$ defined by \eqref{bibliotek} and let 
$\widetilde \Theta_k$ be the corresponding curvature forms. Then a
computation, \emph{cf.}\ \eqref{snabel} and \eqref{fabel}, yields
\begin{equation}\label{wasta}
(\widehat\Theta^\epsilon_k)_{(1,1)} = 
-\dbar\chi_\epsilon \wedge \sigma_k D\varphi_k + 
\chi_\epsilon  (\widetilde\Theta_k)_{(1,1)} + 
(1-\chi_\epsilon)\Theta_k.
\end{equation}

\smallskip

Let us start by computing $\hat p_1^2$.  Recall from the proof of
Lemma \ref{struntsak} that 
$\tilde p_j = - \tr (\widetilde\Theta_1)_{(1,1)}^j + \tr
(\widetilde\Theta_2)_{(1,1)}^j $
vanishes where $\chi_\epsilon\not\equiv 0$ for $j = 1,2$. Moreover, note
in view of \eqref{skriket} that 
$-\tr \Theta_1 + \tr \Theta_2 = m\hat\omega$.  It follows that
\begin{equation*}
\hat p_1 = 
 -\tr (\widehat\Theta^\epsilon_1)_{(1,1)} + \tr (\widehat\Theta^\epsilon_2)_{(1,1)}
 = \\
\dbar\chi_\epsilon \wedge \big(\tr (\sigma_1 D\varphi_1) -
\tr (\sigma_2 D\varphi_2)\big) + (1-\chi_\epsilon)m\hat\omega.
\end{equation*}
Note that $(1-\chi_\epsilon)\dbar\chi_\epsilon = (1/2)\dbar\tilde{\chi}_\epsilon$, where
$\tilde{\chi} = 2(\chi-\chi^2/2) \sim \chi_{[1,\infty)}$.
Using this and that $(\dbar \chi_\epsilon)^2 = 0$, we
get
\begin{align*}
  \hat p_1^2 &= 
  2m \hat\omega \wedge (1-\chi_\epsilon)\dbar \chi_\epsilon \wedge \big(\tr (\sigma_1 D\varphi_1) -
\tr (\sigma_2 D\varphi_2)\big) + (1-\chi_\epsilon)^2m^2\hat\omega^2\\
 &= m\hat\omega \wedge \dbar\tilde{\chi}_\epsilon \wedge  \tr (\sigma_1
  D\varphi_1) + O_\epsilon.
\end{align*}
Note that
\begin{equation}\label{bokhylla}
\sigma_1 D\varphi_1 = -\frac{Dy^m}{y^m}\sigma_1'\varphi_1' + \theta_1
\sigma_1'\varphi_1' + \sigma_1'D\varphi_1'.
\end{equation}
Therefore, in view of the Poincar\'e-Lelong formula, \emph{cf.}\
\eqref{oklart} and \eqref{eq:sigmaphiprim}, since $\sigma_1'\varphi_1'$ is smooth outside the origin,
\begin{equation}\label{krispig}
  \dbar\tilde{\chi}_\epsilon \wedge \sigma_1 D\varphi_1 = 
-\dbar\tilde{\chi}_\epsilon \wedge   \frac{Dy^m}{y^m} \sigma_1'\varphi_1'  + 
O_\epsilon
\xrightarrow[\epsilon\to 0]{}
\frac{2\pi}{i} m [y = 0]\left[ \begin{array}{cc} 0 & 0 \\ 0 & 1 \end{array} \right]
\end{equation}
outside the origin. Since the limit is a pseudomeromorphic $(1,1)$-current, \eqref{krispig} holds everywhere by
the dimension principle.
It follows that
\begin{equation}\label{huskvarna}
  \lim_{\epsilon\to 0} \hat p_1^2 = 
  \lim_{\epsilon\to 0}  m\hat\omega \wedge \dbar\tilde{\chi}_\epsilon \wedge  \tr (\sigma_1
  D\varphi_1)
  = \frac{2\pi}{i}m^2 \hat\omega \wedge [y = 0].
 \end{equation}

\smallskip

Let us next consider $\hat p_2 = -\tr (\widehat \Theta_1)^2_{(1,1)} + \tr
(\widehat \Theta_2)_{(1,1)}^2$.
A computation using \eqref{spar}, \emph{cf.}\ \eqref{wasta},
yields
\begin{align}\label{prinsessan}
    \tr (\widehat\Theta^\epsilon_k)_{(1,1)}^2 &= 
\tr \big(-\dbar\chi_\epsilon \wedge \sigma_k D\varphi_k - \chi_\epsilon \dbar(\sigma_k
D\varphi_k) + \Theta_k\big)^2\\
&=\dbar\chi_\epsilon^2 \wedge \tr \big(\sigma_k D\varphi_k\dbar(\sigma_k
D\varphi_k)\big)-2\dbar\chi_\epsilon \wedge \tr (\sigma_k D\varphi_k \Theta_k)
 + \tr \big(\chi_\epsilon (\widetilde\Theta_k)_{(1,1)} + (1-\chi_\epsilon)
\Theta_k\big)^2.\nonumber
\end{align}
Again using that $\tilde p_j = -\tr (\widetilde \Theta_1)_{(1,1)}^j + \tr (\widetilde
\Theta_2)_{(1,1)}^j$ vanishes where $\chi_\epsilon\not\equiv 0$,
we get
\begin{equation}\label{aina}
- \big(\chi_\epsilon (\widetilde\Theta_1)_{(1,1)} + (1-\chi_\epsilon)
\Theta_1\big)^2 + 
\big(\chi_\epsilon (\widetilde\Theta_2)_{(1,1)} + (1-\chi_\epsilon)
\Theta_2\big)^2 = O_\epsilon \to 0.
\end{equation}
Note that $\dbar\chi_\epsilon \wedge\sigma_2 D\varphi_2
\Theta_2$ is in $O_\epsilon$. Therefore, in view of \eqref{skriket}
and \eqref{krispig},
\begin{multline}\label{hemlig}
2\dbar\chi_\epsilon \wedge \tr (\sigma_1 D\varphi_1 \Theta_1) -2\dbar\chi_\epsilon \wedge\tr (\sigma_2 D\varphi_2 \Theta_2)
 = \\
-2\dbar\chi_\epsilon \wedge
\frac{Dy^m}{y^m}\tr (\sigma_1' \varphi_1' \Theta_1) + O_\epsilon
\xrightarrow[\epsilon\to 0]{}
- \frac{2\pi}{i}2m (\ell + m)\hat\omega \wedge [y = 0].
\end{multline}

Let us next consider the contribution from the first term
\begin{equation}\label{hundratals}
\dbar\chi_\epsilon^2 \wedge \tr \big(\sigma_k D\varphi_k\dbar(\sigma_k
D\varphi_k)\big)
 = 
\dbar\chi_\epsilon^2 \wedge \tr \big(\sigma_k D\varphi_k\dbar\sigma_k
D\varphi_k\big)
-
\dbar\chi_\epsilon^2 \wedge \tr \big(\sigma_k D\varphi_k\sigma_k
\dbar(D\varphi_k)\big)
\end{equation}
in \eqref{prinsessan}. We start by considering the contribution from the first term in
\eqref{hundratals}. By arguments as in the proof of Lemma~\ref{klunga}, we get that
\begin{multline}\label{ekeskog}
 - \dbar\chi_\epsilon^2 \wedge \tr \big(\sigma_1 D\varphi_1 \dbar \sigma_1 D\varphi_1 \big) + 
  \dbar\chi_\epsilon^2 \wedge \tr \big(\sigma_2 D\varphi_2 \dbar \sigma_2
  D\varphi_2 \big)
  = \\
  \dbar\chi_\epsilon^2 \wedge \tr \big(\sigma_2 \dbar \sigma_1 D\varphi_1  D\varphi_2 \big)
 - \dbar\chi_\epsilon^2 \wedge \tr \big( D \dbar \sigma_1 D\varphi_1 \big) + 
 \dbar\chi_\epsilon^2 \wedge \tr \big( D \dbar \sigma_2 D\varphi_2 \big).
\end{multline}
Taking the limit of the first term in the right hand side of
\eqref{ekeskog}, we get
\begin{equation}
  \label{tiotals}
  \begin{split}
    \lim_{\epsilon\to 0} \dbar\chi_\epsilon^2 \wedge
    \tr (\sigma_2 \dbar \sigma_1 D\varphi_1 D\varphi_2)
    &= \lim_{\epsilon\to 0} \dbar\chi_\epsilon^2 \wedge
    \tr (D\varphi_1 D\varphi_2\sigma_2 \dbar \sigma_1) \\
    &= \tr (D\varphi_1 D\varphi_2 R_2^0) \\\
    &= \left (\frac{2\pi}{i} \right)^2 \ell (2k-m)[0].
  \end{split}
\end{equation}
Here, the first equality follows from
\eqref{spar}, the second equality from \eqref{eq:Rdef},
and the third equality is computed in
\cite[Example 5.2]{LW2}.
Next, by
\eqref{spar},
$\tr ( D \dbar \sigma_1 D\varphi_1) = 
-\tr ( D\varphi_1  D \dbar \sigma_1 )$.
Using \eqref{bord}, \eqref{karlsborg} and the fact that
$\varphi_1'\sigma_1' = 1$, so $\varphi_1' \dbar\sigma_1' = 0$, we have
\[
 \dbar \chi_\epsilon^2 \wedge  D\varphi_1  D \dbar \sigma_1  = 
 \dbar \chi_\epsilon^2\wedge  D(D\varphi_1  \dbar \sigma_1)
 = 
 \dbar \chi_\epsilon^2\wedge D(D\varphi'_1  \dbar \sigma'_1)
 = \dbar \chi_\epsilon^2 \wedge D\varphi'_1 D\dbar\sigma'_1.
\]
If we let $f$ be the section of $\Ok(k-m)\oplus\Ok(\ell)$ defined by $f = \left[\begin{array}{cc} y^{k-m} & x^\ell \end{array}\right]$,
then $\varphi_2,\varphi_1'$ are the morphisms in the Koszul complex defined by (contraction with) $f$.
If we let $\sigma$ be the minimal inverse of $f$, when $f$ is viewed as a section of $\Hom(\Ok(-(k-m))\oplus\Ok(-\ell),\Ok)$,
then $\sigma_2$ and $\sigma_1'$ are given by multiplication with $\sigma$. One may verify that $D\varphi_2$ and $D\varphi_1'$
are given by contraction with $Df$, and that $D\dbar\sigma_2$ and $D\dbar\sigma_1'$ are given by multiplication with $D\dbar\sigma$.
A calculation then yields that
\[
	\tr \big(D\varphi'_1 D\dbar\sigma'_1 \big) = - \tr \big( D \dbar \sigma_2 D\varphi_2\big),
\]
so by \eqref{ekeskog},
\begin{equation}\label{ekeskog2}
 - \dbar\chi_\epsilon^2 \wedge \tr \big(\sigma_1 D\varphi_1 \dbar \sigma_1 D\varphi_1 \big) +
  \dbar\chi_\epsilon^2 \wedge \tr \big(\sigma_2 D\varphi_2 \dbar \sigma_2
  D\varphi_2 \big)
  = \dbar\chi_\epsilon^2 \wedge \tr \big(\sigma_2 \dbar \sigma_1 D\varphi_1  D\varphi_2 \big).
\end{equation}
Thus, in view of \eqref{ekeskog2} and \eqref{tiotals},
\begin{equation}\label{fixatill}
- \dbar\chi_\epsilon^2 \wedge \tr \big(\sigma_1 D\varphi_1 \dbar \sigma_1 D\varphi_1 \big) + 
  \dbar\chi_\epsilon^2 \wedge \tr \big(\sigma_2 D\varphi_2 \dbar \sigma_2
  D\varphi_2 \big) \xrightarrow[\epsilon\to 0]{} \left (\frac{2\pi}{i} \right)^2 \ell (2k-m)[0].
  \end{equation}

Next, let us consider the contribution from the second term in
\eqref{hundratals}.
As above, using \eqref{eq:dbarD}, \emph{cf.}\ \eqref{bokhylla},
\begin{equation*}
\dbar\chi_\epsilon^2 \wedge
\big(\sigma_1 D\varphi_1\sigma_1
\dbar (D\varphi_1) \big)
 = 
-\dbar\chi_\epsilon^2 \wedge
\big(\sigma_1 D\varphi_1 \sigma_1 \varphi_1 \Theta_1 \big)
 = 
\dbar\chi_\epsilon^2 \wedge
\big( \frac{Dy^m}{y^m}\sigma_1'\varphi_1'\Theta_1 \big) + O_\epsilon.
\end{equation*}
Note that
$\dbar\chi_\epsilon^2 \wedge \sigma_2 D\varphi_2\sigma_2
\dbar (D\varphi_2)$ is in $O_\epsilon$.
Thus, by \eqref{krispig} and \eqref{skriket},
\begin{equation}
  \label{tjog}
  \dbar\chi_\epsilon^2 \wedge \tr \big(\sigma_1 D\varphi_1 \sigma_1
  \dbar(D\varphi_1)\big)-
  \dbar\chi_\epsilon^2 \wedge \tr \big(\sigma_2 D\varphi_2 \sigma_2
  \dbar (D\varphi_2) \big)
\xrightarrow[\epsilon\to 0]{} \frac{2\pi}{i}m(\ell + m) \hat\omega \wedge [y = 0],
\end{equation}
\emph{cf.}\ \eqref{hemlig}.

From \eqref{hundratals}, \eqref{fixatill}, and \eqref{tjog},
we conclude that
\begin{multline}\label{miljontals}
 -\dbar\chi_\epsilon^2 \wedge
  \tr \big(\sigma_1 D\varphi_1 \dbar (\sigma_1 D\varphi_1) \big) + 
  \dbar\chi_\epsilon^2 \wedge \tr \big(\sigma_2 D\varphi_2 \dbar (\sigma_2
  D\varphi_2)\big)
\xrightarrow[\epsilon\to 0]{} \\
  \left (\frac{2\pi}{i}\right)^2
  \ell (2k -m) [0] + \frac{2\pi}{i}m(\ell + m) \hat\omega \wedge [y = 0].
\end{multline}

Next, from \eqref{prinsessan}, \eqref{aina}, \eqref{hemlig}, and
\eqref{miljontals}, we conclude that
\begin{equation}\label{delen}
  \hat p_2 = -\tr (\widehat \Theta_1)^2_{(1,1)} + \tr
  (\widehat \Theta_2)_{(1,1)}^2 \xrightarrow[\epsilon\to 0]{}
 -(2\pi/i)m (\ell + m) \hat\omega \wedge [y = 0] + 
  (2\pi/i)^2\ell (2k -m) [0].
\end{equation}
Finally from \eqref{begynnelse}, \eqref{huskvarna}, and \eqref{delen}
we conclude that
\[
  c_2^{\Res}(E, D)
 = \left (\frac{i}{2\pi}\right)^2 \frac{1}{2}\,
  \lim_{\epsilon\to 0} (\hat p_1^2 - \hat p_2)
 = \frac{1}{2} \big(m(2m + \ell) \omega \wedge
  [y = 0] - \ell (2k -m) [0]\big)
\]
and that
\[
  (c_1^{\Res})^2(E,D)
 = \left (\frac{i}{2\pi}\right)^2
  \lim_{\epsilon\to 0} \hat p_1^2 = m^2 \omega [y = 0].
\]
Taking cohomology, since 
$[[0]] = [[y = 0]\wedge \omega] = [\omega^2]$, we get
\begin{align*}
  \big[c_2^{\Res}(E,D)\big] = \big(m^2 + \ell(m-k)\big) [\omega^2] = 
  c_2(\mathcal F)
  \quad\text{ and }\quad
  \big[(c_1^{\Res})^2(E,D)\big] = m^2[\omega^2] = c_1(\mathcal{F})^2,
\end{align*}
see \eqref{faren}.



\begin{thebibliography}{XXXXXX+}

\bibitem[AW07]{AW1}
  M.~Andersson and E.~Wulcan,
  \emph{Residue currents with prescribed annihilator ideals},
  Ann. Sci. \'{E}cole Norm. Sup.
  \textbf{40} (2007),
  985--1007.

\bibitem[AW10]{AW2}
\bysame,
  \emph{Decomposition of residue currents},
  J. Reine Angew. Math.
  \textbf{638} (2010),
  103--118.

\bibitem[AW18]{AW3}
  \bysame,
  \emph{Direct images of semi-meromorphic currents},
  Ann. Inst. Fourier
  \textbf{68} (2018),
  875--900.

\bibitem[BB72]{BB}
  P.~Baum and R.~Bott,
  \emph{Singularities of holomorphic foliations},
  J. Differential Geometry
  \textbf{7} (1972),
  279--342.

\bibitem[BGS90a]{BGS4}
  J.-M.~Bismut, H.~Gillet, and C.~Soul\'{e},
  \emph{Bott-Chern currents and complex immersions},
  Duke Math. J. \textbf{60} (1990),
  255--284.

\bibitem[BGS90b]{BGS5}
  \bysame,
  \emph{Complex immersions and Arakelov geometry}.
  In: The Grothendieck Festschrift, Vol. I, pp.~249--331,
  Progr. Math., 86,
  Birkh\"{a}user Boston, Boston, MA, 1990.

\bibitem[BSW21]{BSW}
  J.-M.~Bismut, S.~Shen, and Z.~Wei,
  \emph{Coherent sheaves, superconnections, and RRG},
  preprint \arXiv{2102.08129} (2021).

\bibitem[BS58]{BS}
  A.~Borel and J.-P.~Serre,
  \emph{Le th\'{e}or\`eme de Riemann-Roch},
  Bull. Soc. Math. France
  \textbf{86} (1958),
  97--136.

\bibitem[CH78]{CH}
  N.~R.~Coleff and M.~E.~Herrera,
  \emph{Les courants r\'{e}siduels associ\'{e}s \`a une forme
    m\'{e}romorphe}, 
  Lecture Notes in Mathematics 633,
  Springer, Berlin, 1978.

\bibitem[EH16]{EH}
  D.~Eisenbud and J.~Harris,
  \emph{3264 and all that---a second course in algebraic geometry},
  Cambridge University Press, Cambridge, 2016.

\bibitem[Ful98]{Fulton}
  W.~Fulton,
  \emph{Intersection theory},
  Ergebnisse der Mathematik und ihrer Grenzgebiete. 3. Folge. 2,
  Springer-Verlag, Berlin, 1998.

\bibitem[Gre80]{Green}
  H.~I.~Green,
  \emph{Chern classes for coherent sheaves},
  Ph.D. Thesis,
  University of Warwick, (1980). Available from \url{http://wrap.warwick.ac.uk/40592/1/WRAP_THESIS_Green_1980.pdf}.

\bibitem[Gri10]{Gri}
  J.~Grivaux,
  \emph{Chern classes in Deligne cohomology for coherent analytic sheaves},
  Math. Ann.
  \textbf{347} (2010),
  249--284.

\bibitem[Hos20a]{Hos}
  T.~Hosgood,
  \emph{Simplicial Chern-Weil theory for coherent analytic sheaves, part I},
  preprint \arXiv{2003.10023} (2020).

\bibitem[Hos20b]{Hos2}
  \bysame,
  \emph{Simplicial Chern-Weil theory for coherent analytic sheaves, part II},
  preprint \arXiv{2003.10591} (2020).


\bibitem[HL93]{HL}
  F.~R.~Harvey and H.~B.~Lawson Jr.,
  \emph{A theory of characteristic currents associated with a singular
   connection},
  Ast\'{e}risque \textbf{213}, Soci\'et\'e Math\'ematique de France, 1993.

\bibitem[HL71]{HeLi}
  M.~Herrera and D.~Lieberman,
  \emph{Residues and principal values on complex spaces},
  Math. Ann. \textbf{194} (1971),
  259--294.

\bibitem[JL21]{JL}
  J.~Johansson and R.~L\"{a}rk\"{a}ng,
  \emph{An explicit isomorphism of different representations of the
    Ext functor using residue currents},
  preprint \arXiv{2109.00480} (2021).


\bibitem[Kar08]{Karoubi}
  M.~Karoubi,
  \emph{$K$-theory},
  Classics in Mathematics, Springer-Verlag, Berlin, 2008.

\bibitem[LW18]{LW2}
  R.~L\"{a}rk\"{a}ng, and E.~Wulcan,
  \emph{Residue currents and fundamental cycles},
  Indiana Univ. Math. J.
  \textbf{67} (2018),
  1085--1114.

\bibitem[LW21]{LW3}
  \bysame,
  \emph{Residue currents and cycles of complexes of vector bundles}, Ann. Fac. Sci. Toulouse~\textbf{30} (2021), no.~5, 961--984.
 

\bibitem[Mac95]{MacDonald}
  I.~G.~Macdonald,
  \emph{Symmetric functions and Hall polynomials},
  Oxford Mathematical Monographs,
  The Clarendon Press, Oxford University Press, New York, 1995.

\bibitem[Qia16]{Qiang}
  H.~Qiang,
  \emph{On the Bott-Chern characteristic classes for coherent sheaves},
  preprint \arXiv{1611.04238} (2016).

\bibitem[Suw98]{Su}
  T.~Suwa,
  \emph{Indices of vector fields and residues of singular holomorphic
    foliations},
  Actualit\'{e}s Math\'{e}matiques,
  Hermann, Paris, 1998.

\bibitem[TT86]{TTGreen}
  D. Toledo and Y.~L.~L.~Tong,
  \emph{Green's theory of Chern classes and the Riemann-Roch formula}.
  In: \emph{The Lefschetz centennial conference, Part I} (Mexico City, 1984),
  pp.~261--275,
  Contemp. Math. 58, Amer. Math. Soc., Providence, RI, 1986.

\bibitem[Tu17]{Tu}
  L.~W.~Tu,
  \emph{Differential geometry},
  Graduate Texts in Mathematics, 275, Springer, Cham, 2017.

\bibitem[Wu20]{Wu}
  X.~Wu,
  \emph{Intersection theory and Chern classes in Bott-Chern cohomology},
  preprint \arXiv{2011.13759} (2020).

\end{thebibliography}
\end{document}